\title[]{The polyhedral type of a polynomial map on the plane}
\author{Boulos El Hilany, Kemal Rose}
\thanks{For this work, the first author was partially supported by the DFG Walter Benjamin Programme: EL 1092/1-1}
\thanks{MSC 2020: Primary 14D06, Secondary: 14T20, 58K15}
\begin{document}

\maketitle

\begin{abstract} 
Two continuous maps $f, g : \mathbb{C}^2\to\mathbb{C}^2$ are said to be topologically equivalent if there exist homeomorphisms $\varphi,\psi:\mathbb{C}^2\to\mathbb{C}^2$ satisfying $\psi\circ f\circ\varphi = g$. 
It is known that there are finitely many topologically non-equivalent polynomial maps $\mathbb{C}^2\to\mathbb{C}^2$ with any given degree $d$. The number $T(d)$ of these topological types is known only whenever $d=2$. In this paper, we describe the topology of generic complex polynomial maps on the plane using the corresponding pair of Newton polytopes and establish a method for constructing topologically non-equivalent maps of degree $d$. We furthermore provide a software implementation of the resulting algorithm, and present lower bounds on $T(d)$ whenever $d=3$ and $d=4$.
\end{abstract}
 \markleft{}
 \markright{}
 
\section{Introduction}\label{sec:intro}
Polynomial maps $f:=(f_1,\ldots,f_n):\CtCmn$ are some of the most classical objects in algebraic geometry and singularity theory. 
Questions regarding their stability, singularities, and classifications find their roots in the pioneering works of Whitney, Thom, Boardmann, and  Mather~\cite{whitney1955singularities,boardman1967singularities,Tho69,
mather1973stratifications,
mather1973thom} on smooth maps between Euclidean spaces. One such problem, initiated by Thom~\cite{thom1962stabilite,Tho69}, asks for a topological characterization: Two continuous maps $f,g:\CtCmn$ are said to be \emph{topologically equivalent} if there exist homeomorphisms $\varphi:\C^m\to\C^m$ and $\psi:\CtCn$ satisfying 
\[
\psi\circ f\circ \varphi = g.
\] Fukuda~\cite{Fuk76} confirmed Thom's famous conjecture that there are finitely many topological types of polynomial functions $\C^m\to\C$ with a given degree $d$. Soon after, Aoki, Noguchi, and Sabbah~\cite{aoki1980topological,Sab83} generalized this result for polynomial maps $\C^2\to\C^n$ with fixed $d=\deg f:=\max(\deg f_1,\ldots,\deg f_n)$. Nakai then showed~\cite{Nak84} that all the remaining cases for $m\geq 3$ admit infinitely many topological types if $\deg f\geq 4$. 

The ensuing classification problem of topological types is still open for polynomial functions with a given degree, as well as for maps on the plane. Nevertheless, it becomes tractable in some instances, such as proper, or quadratic maps. In the former case, this amounts to counting degree-$d$ ramified covers of $\C^m$ (c.f.~\cite[Proof of Theorem 3.5]{jelonek2016semi}), and in the latter, a full classification has been achieved through case-by-case analysis~\cite{FJ17,FJM18}. In general, however, the difficulty emanates from the lack of accessible tools for describing those properties of polynomial maps that are invariant under homeomorphisms. 

Our main result concerns polynomial maps on the plane.
\begin{theorem}\label{thm:main-bound}
    The number of topological types of polynomial maps $\CtC$ of degree smaller or equal to $3$ and $4$ are at least $26$ and
    $3~217$ respectively.
\end{theorem}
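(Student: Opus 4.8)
The plan is to reduce the theorem to a finite computation by way of a single structural input: the \emph{polyhedral type}. To a generic polynomial map $f=(f_1,f_2):\mathbb{C}^2\to\mathbb{C}^2$ one attaches a finite packet of combinatorial data — read off from the pair of Newton polytopes $(\mathrm{Newt}(f_1),\mathrm{Newt}(f_2))$ together with derived objects such as the Newton polytope of the non-properness locus, the behaviour at infinity, and the associated Euler-characteristic and intersection numbers — and the heart of the paper is the assertion that this packet is invariant under topological equivalence. Granting this, if $N(d)$ denotes the number of distinct polyhedral types realised by pairs of lattice polytopes contained in $d\Delta=\mathrm{conv}(0,de_1,de_2)$ by dominant, Newton-nondegenerate maps, then $T(d)\ge N(d)$, since two generic maps whose polytope pairs have different polyhedral types cannot be topologically equivalent. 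It then remains to compute $N(3)$ and $N(4)$.

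First I would enumerate the candidate data. The degree-$\le d$ constraint forces $\mathrm{Newt}(f_i)\subseteq d\Delta$, so one lists all ordered pairs $(P_1,P_2)$ of lattice polytopes inside $d\Delta$ for which the generic map with those Newton polytopes is dominant and nondegenerate. To keep the list manageable and to avoid overcounting, I would quotient by the symmetry group $G$ generated by the affine lattice automorphisms of $d\Delta$ (permuting its three vertices, acting diagonally on the pair) and by the involution $(P_1,P_2)\mapsto(P_2,P_1)$ coming from post-composing $f$ with the coordinate swap of the target. Both operations are induced by homeomorphisms of the source or target, so the polyhedral type is constant on $G$-orbits, and one works with canonical representatives.

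Next, for each representative I would evaluate the polyhedral type by the explicit formulas from the structural part of the paper and tally the distinct outcomes; this is precisely the algorithm we implement. Running it on all admissible pairs inside $3\Delta$ returns $26$ polyhedral types, and inside $4\Delta$ returns $3\,217$, which via $T(d)\ge N(d)$ yields the stated bounds, and recovers the known value for $d=2$ as a consistency check.

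The principal obstacle is not topology but the scale and correctness of the enumeration. Already for $d=4$ the number of polytope pairs to process is very large, so one needs an efficient $G$-canonicalisation and careful bookkeeping of the degenerate strata (lower-dimensional polytopes, non-dominant or degenerate families) that must be excluded, as well as a faithful transcription of the combinatorial formulas into code. Fortunately, since only a lower bound is claimed, completeness of the polyhedral type is not required — a missed coincidence would merely mean $T(d)>N(d)$ — so it suffices to secure the invariance direction of the structural theorem and to validate the implementation, e.g.\ by independent recomputation and by matching the $d=2$ case against the classification already in the literature.
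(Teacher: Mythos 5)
Your proposal follows the paper's route: attach to a pair of Newton polytopes $A$ a finite vector $\Psi(A)$, prove it is a topological invariant of the generic map $f\in\C^A$ (this is the content of Theorem~\ref{thm:main2}, built on Theorem~\ref{thm:discriminant} and Proposition~\ref{prp:separator}), enumerate all admissible pairs $A$ of bounded degree, and count distinct images of $\Psi$ to obtain the lower bound via Corollary~\ref{cor:lower_bound}. The one structural point you under-specify is that the invariance theorem holds only for \emph{conical} pairs $A\in\ccCgeq$ (Definition~\ref{def:conical}); Newton-nondegeneracy and dominance alone do not guarantee that $\Csf$ is generically smooth and $\Dsf$ has only mild singularities, which is what makes $\Psi$ a well-defined invariant. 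So the enumeration must be restricted to conical pairs, not just dominant nondegenerate ones.

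The second, more concrete issue is your symmetry group $G$. The swap $(A_1,A_2)\mapsto(A_2,A_1)$ (postcomposing with the coordinate swap on the target) and the simultaneous transposition $A_i\mapsto A_i^{\mathsf T}$ (precomposing with the coordinate swap on the source) are legitimate, and the paper handles them not by quotienting the list but by building $\Psi$ to be symmetric in them: $\Psi_4,\Psi_5,\Psi_6$ are symmetric functions of $(m_c,m_h,m_v)$ and $\Psi_9,\dots,\Psi_{12}$ are symmetric in $(\Gamma,\Gamma')$. However, the group generated by \emph{all} affine lattice automorphisms of $d\Delta$ permuting its three vertices is too large. An automorphism that moves the origin, e.g.\ $(a,b)\mapsto(d-a-b,b)$ on exponents, corresponds to the substitution $f(x,y)\mapsto x^d f(1/x,\,y/x)$, which is a birational change of the source but not a homeomorphism of $\C^2$; it does not preserve topological type, and it does not preserve the origin-anchored quantities $m_v$, $m_h$, $\cN$, $A^0$, $\Gamma$, $\Gamma'$ that enter $\Psi$. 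Quotienting by that larger group would merge genuinely distinct polyhedral types and undercount; restricting $G$ to the two coordinate swaps gives the correct count $\#\Psi(\mathscr{C}_d)$ claimed in the theorem.
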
 To achieve the lower bounds in Theorem~\ref{thm:main-bound}, we developed a tool for constructing many topological types that relies on polyhedral invariants of polynomials; the \emph{support} of a polynomial $P$ in $n$ variables is the set $\supp(P)\subset\N^n$ of exponent vectors of the monomials of $P$. Its \emph{Newton polytope} $\NP(P)$ is the convex hull in $\R^n$ of $\supp(P)$. 
Let $\ccPgeq$ be the set of all pairs $A:=(A_1,A_2)$ of lattice polytopes in the non-negative orthant of $\R^2$. For any $A\in\ccPgeq$, we use $\C^A$ to denote the vector space consisting of polynomial maps $f:\CtC$ that satisfy $\NP(f_1)\subset A_1$ and $\NP(f_2)\subset A_2$. 

As a consequence of Sabbah's results in~\cite{Sab83} (see Lemma~\ref{lem:main1} below), there exists a Zariski open subset $\Omega\subset \C^A$ in which any two polynomial maps are topologically equivalent. Accordingly, a map $f\in\C^A$ is called \emph{generic} if $f\in\Omega$. Unsurprisingly, all generic maps in $\C^A$ share a list of invariants, which can be computed using computer algebra methods (see Corollary~\ref{cor:main} below). This list consists of algebraic identities from three curves: the \emph{critical points}, $\cCf$, of $f$, its \emph{discriminant} $\cDf:=f(\cCf)$, and its \emph{non-properness} set $\cS_f$, which is the set
\begin{align*}
\cS_f&:=\left\{w\in\C^2~\left|~\exists~\{z^{k}\}_{k\in\N}\subset\C^2,~ \Vert z^k\Vert\underset{k\rightarrow\infty}{\longrightarrow}\infty,~ f(z^k)\longrightarrow w\right\}\right.
\end{align*} Whitney studied in~\cite{whitney1955singularities} smooth maps of the plane into the plane, and showed that $\cCf$ is smooth, and a singularity of $\cDf$ is either a \emph{simple node} or an \emph{ordinary cusp}. In other words, either $p$ is a normal self-crossing of $\cDf$, or, for some $a\in\C^2$ with $f(a)=p$, the germ $(f,a):(\C^2,a)\to (\C^2,f(a))$ is biholomorphically equivalent to $(\C^2,0)\ni(x,~y)\mapsto (x,~xy+y^3)\in(\C^2,0)$. 

A shift from analytic to polynomial maps arises as a highly challenging task due to the latter forming a more rigid class. Nevertheless, Farnik, Jelonek and Ruas~\cite{FJR19} recently extended Whitney's results for the space $\Omega(d_1,d_2)$ of polynomial maps on the plane $f:\CtC$ satisfying $\deg f_1 = d_1$ and $\deg f_2 = d_2$. Namely, they showed that for any given pair $(d_1,d_2)\in\N^2$, a generic map in $ \Omega(d_1,d_2)$ is proper (i.e., $\cS_f=\emptyset$), the curve $\cCf$ is smooth, a singular point of $\cDf$ can either be a simple node or an ordinary cusp, and their respective numbers are computed in terms of $d_1$ and $d_2$. 

In this paper, we extend here some of the above results in~\cite{FJR19} to maps in $\C^A$ for all $A$ in a large family $\ccCgeq\subset \ccPgeq$ of pairs of polytopes. The family $\ccCgeq$ strictly contains all $(A_1,A_2)$ where, for each $i=1,2$, there is a non-singular plane conic passing through five points in $A_i\cap \N^2\setminus\{(0,0)\}$. Accordingly, all pairs in $\ccCgeq$ will be called ``conical'' (Definition~\ref{def:conical}). Our central result, Theorem~\ref{thm:discriminant}, on generic maps $f\in\C^A$ states that $\cCf$ is smooth outside $\{(0,0)\}$, and if $p\in\C^2\setminus\{(0,0)\}$ is a singular point of $\cDf$, then it is either a simple node or an ordinary cusp. We also show that a singular point of $\cS_f\setminus\{(0,0)\}$ is always a simple node. Furthermore, we express the number of the above singularities in terms of the combinatorics of $A$, and we present formulas for other topologically relevant data of $f$ such as the number of components of $\cCf$, $\cDf$, and $\cS_f$, together with their multiplicities and geometric genera.  Consequently, we obtain a list of numerical invariants that distinguish the topological type of $f$, and represent it as a vector $\Psi(A)\in\N^{12}$, which we call the ``polyhedral type'' of $f$ (see Definition~\ref{def:polyhrdral-type}), given by a map $\Psi:\ccCgeq\to\N^{12}$. To this end, we show (Theorem~\ref{thm:main2}) that for any $A,B\in\ccCgeq$, two generic maps $f\in\C^A$ and $g\in\C^B$ are not topologically equivalent whenever $f$ and $g$ have different polyhedral types. 

\begin{corollary}\label{cor:lower_bound}
Let $d\in\N$ be any positive integer, and let $\mathscr{C}_d$ be the subset of conical pairs $A\in\ccCgeq$ for which any $f\in\C^A$ satisfies  $\deg f = d$. Then, the value $\#\Psi(\mathscr{C}_d)$ bounds from below the number of topological types of polynomial maps $\CtC$ of degree $d$.
\end{corollary}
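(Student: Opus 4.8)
The plan is to derive Corollary~\ref{cor:lower_bound} as a formal consequence of Theorem~\ref{thm:main2} together with the genericity setup supplied by Lemma~\ref{lem:main1}; all the real content lies in those earlier statements.

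First I would record that, for every conical pair $A\in\ccCgeq$, the Zariski open subset $\Omega\subset\C^A$ of Lemma~\ref{lem:main1} is nonempty: $\C^A$ is an affine space, hence irreducible, so any nonempty Zariski open subset is dense, and in particular contains a point. Therefore I may choose, for each $A\in\mathscr{C}_d$, a generic map $f_A\in\Omega\subset\C^A$. By the defining property of $\mathscr{C}_d$ this map has $\deg f_A=d$, so $f_A$ is a polynomial map $\CtC$ of degree $d$; write $\tau(A)$ for its topological equivalence class. This defines a map $\tau\colon\mathscr{C}_d\to\mathcal{T}_d$, where $\mathcal{T}_d$ denotes the set of topological types of degree-$d$ polynomial maps $\CtC$. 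Since the polyhedral type $\Psi(A)$ is extracted purely from the combinatorics of $A$ (Definition~\ref{def:polyhrdral-type}), the restriction $\Psi|_{\mathscr{C}_d}$ is a well-defined map with values in $\N^{12}$, and by Corollary~\ref{cor:main} its value is independent of the generic representative used to compute it.

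Next I would compare $\tau$ and $\Psi|_{\mathscr{C}_d}$ on $\mathscr{C}_d$. Theorem~\ref{thm:main2} asserts that for $A,B\in\ccCgeq$ with $\Psi(A)\neq\Psi(B)$, generic maps in $\C^A$ and $\C^B$ are not topologically equivalent; applied to $A,B\in\mathscr{C}_d$ this is the contrapositive of the implication $\tau(A)=\tau(B)\Rightarrow\Psi(A)=\Psi(B)$. Hence the partition of $\mathscr{C}_d$ into fibers of $\tau$ refines the partition into fibers of $\Psi|_{\mathscr{C}_d}$, which produces a well-defined surjection $\tau(\mathscr{C}_d)\to\Psi(\mathscr{C}_d)$ sending $\tau(A)$ to $\Psi(A)$. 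Therefore $\#\tau(\mathscr{C}_d)\geq\#\Psi(\mathscr{C}_d)$, and since $\tau(\mathscr{C}_d)\subset\mathcal{T}_d$ we conclude $\#\mathcal{T}_d\geq\#\tau(\mathscr{C}_d)\geq\#\Psi(\mathscr{C}_d)$, which is exactly the claimed bound.

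I do not expect a genuine obstacle in this argument — it is bookkeeping on top of Theorems~\ref{thm:discriminant} and~\ref{thm:main2} and Corollary~\ref{cor:main}. The only two points that merit care are: (i) the nonemptiness of $\Omega\cap\C^A$, handled above; and (ii) the interpretation of the condition defining $\mathscr{C}_d$, since, read verbatim, ``every $f\in\C^A$ has $\deg f=d$'' is false for the vector space $\C^A$. What is meant is that a \emph{generic} $f\in\C^A$ has degree $d$, i.e.\ $\max_{i=1,2}\max\{a_1+a_2\mid(a_1,a_2)\in A_i\cap\N^2\}=d$; one then has to note that the particular generic representative $f_A$ chosen above realizes this degree rather than a smaller one, which holds because the support of a generic element of $\C^A$ contains a maximal-weight vertex of each $A_i$. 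With this clarification in place the proof is complete.
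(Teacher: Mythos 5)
Your argument is correct and follows the only natural route: the paper states Corollary~\ref{cor:lower_bound} without a separate proof, treating it as an immediate consequence of Theorem~\ref{thm:main2} (together with Lemma~\ref{lem:main1} and Corollary~\ref{cor:main}), and your write-up simply makes that bookkeeping explicit, including the sensible clarification that the condition defining $\mathscr{C}_d$ should be read as ``a generic $f\in\C^A$ has degree $d$.''
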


Theorem~\ref{thm:main-bound} is obtained by counting images $\Psi(A)$ over the all conical pairs $A$ from $\mathscr{C}_3$ and $\mathscr{C}_4$.

\begin{remark}\label{rem:asympt}
The sharpness of the lower bound in Theorem~\ref{thm:main-bound} is expected to deteriorate significantly for large degrees. Indeed, on the one hand, thanks to the properties of the well-known Lyashko-Looijenga map~\cite{looijenga1974complement,lyashko1983geometry}, there are at least $d^{d-3}$ topological types of polynomial functions $g:\C\to\C$ of degree $d$~\cite[\S 5.1.3]{lando2004graphs}. By considering non-dominant maps of the form $(x,~y)\mapsto (0,~g(x))$, we deduce that the lower bound on the number of topological types of polynomial maps $\CtC$ of degree $d$ is at least exponential in $d$. On the other hand, the lower bound in Theorem~\ref{thm:main-bound} is lower than the number of pairs $A\subset \mathscr{C}_d$, which is at most polynomial in $d$ \cite{BOUSQUETMELOU19961}.
\end{remark}

\subsection*{Idea of proof and organization of the paper}\label{sub:proof_idea} 
\S\ref{sec:char-map} is devoted to elaborating on topological invariants (Definition~\ref{def:topological-invariant}), polyhedral type of a generic map (Definition~\ref{def:polyhrdral-type}), and detailing the main result, Theorem~\ref{thm:discriminant}, for conical pairs $A\in\ccCgeq$. 
The proof of Theorem~\ref{thm:discriminant} is achieved in~\S\ref{sec:proof_of_theorem}; it requires three key results: Theorems~\ref{thm:critical-smooth}, \ref{thm:discr-infty}, and~\ref{thm:non-properness-long} in~\S\ref{sec:affine-sing},~\S\ref{sec:toric_sing}, and~\S\ref{sec:non-prop} respectively. 

The main result of~\S\ref{sec:affine-sing} is Theorem~\ref{thm:critical-smooth}; it states that the critical locus $\cCf$ of a generic map $f\in\C^A$ is smooth outside $\{(0,0)\}$, and that $\cDf$ has at most mild singularities. For its proof, we show that the restricted Thom-Boardman map sending $\C^A$ to the first few jet-spaces is smooth with respect to the Thom-Boardman strata describing the singularities of $f$. It turns out that the conical property of $A$ is necessary for smoothness. Then, similarly as in~\cite[Sections 2, 3 and 4]{FJR19}, we apply classical results in the theory of jet spaces (see e.g.,~\cite{golubitsky2012stable}) to describe the singularities of $\cDf$ in $\C^2\setminus\{(0,0)\}$. As for counting their number, one necessitates an adjunction-type formula relating the singularities of $\cDf$ to its geometric genus, together with singular points ``at infinity''. The latter are sent to the boundary of the compactification of $\cDf$ in the toric variety $X_\Delta$ corresponding to the Newton polytope $\Delta:=\NP(\cDf)$. Theorem~\ref{thm:discr-infty} in~\S\ref{sec:toric_sing} states that there are no boundary singular points under this compactification. To prove this, we use the fact that if $\Sigma$ is the Newton polytope of $\cCf$, then its toric compactification in $X_\Sigma$ is transversal to all the orbits of $X_\Sigma$. 

The last ingredient for proving Theorem~\ref{thm:discriminant} requires describing the non-properness set $\cS_f$; it was shown in~\cite{EHT21} that for a generic $f\in\C^A$, the polynomial defining $\cS_f$ depends only on those coefficients of $f$ whose exponent vectors appear at pairs of faces of $A$ (see Theorem~\ref{thm:non-prop-main} below). In Theorem~\ref{thm:non-properness-long}, we use this description to show that $\cS_f$ has only simple nodes as singularities in $\C^2\setminus\{(0,0)\}$. We also show that if $\Xi$ is the Newton polytope of $\cS_f$, then its toric compactification in $X_\Xi$ is transversal to all the one-dimensional orbits of $X_\Xi$. 

Altogether the above descriptions of $\cCf$, $\cDf$, and $\cS_f$ imply that one can use classical results of Bernstein, Kouchnirenko, and Khovanskii, presented in~\S\ref{sec:proof_of_theorem}, to express the singularities and the geometric genera in terms of lattice points and volumes obtained from $\Sigma$, $\Delta$, and $\Xi$. This forms the proof of Theorem~\ref{thm:discriminant}. 

Polytopes $\Sigma,\Delta,\Xi\subset\R^2$ are determined solely from the pair $A$ for generic $f\in\C^A$. The methods for computing them, together with the corresponding algorithmic implementations are presented in~\S\ref{sec:algorithms}. Our software and data 
are made available in the {\tt MathRepo} collection at MPI-MiS via
 \url{https://mathrepo.mis.mpg.de/PolyhedralTypesOfPlanarMaps}.

\section{Topological invariants from Newton pairs}\label{sec:char-map}  
This section is divided into two parts. In the first part, we prove Corollary~\ref{cor:main}, and in the second, we show Theorem~\ref{thm:main2}.

\subsection{The polyhedral type of generic maps} We start with the following result. 

\begin{lemma}\label{lem:main1}
For any pair of polytopes $A\in\ccPgeq$, there exists a Zariski open subset $\Omega\subset \C^A$ in which any two elements $f,g\in\Omega$ represent two topologically equivalent polynomial maps $\CtC$.
\end{lemma}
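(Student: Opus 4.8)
The plan is to invoke Sabbah's finiteness theorem (as stated in~\cite{Sab83}) in the setting of the finite-dimensional family $\C^A$. Recall that for a polynomial map $f:\CtC$ depending algebraically on parameters in an irreducible variety, Sabbah's stratification results produce a finite constructible partition of the parameter space such that over each stratum the map is topologically trivial (locally, and hence globally over a stratum by a standard connectedness argument). The key point is that $\C^A$ is itself an irreducible affine variety — a finite-dimensional vector space — and the ``universal'' map
\[
F:\C^A\times\C^2\longrightarrow \C^A\times\C^2,\qquad (f,z)\longmapsto (f,f(z))
\]
is a polynomial map over the base $\C^A$.

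The steps I would carry out are as follows. First, I would set up $F$ as above and observe it is a polynomial family over $\C^A$ in the sense required by~\cite{Sab83}. Second, I would apply Sabbah's theorem (or, equivalently, the Thom--Mather-type isotopy theorem in the algebraic category that Sabbah proves via his finiteness of the ``bifurcation'' or non-equisingularity locus) to obtain a finite partition $\C^A=\bigsqcup_j S_j$ into locally closed subvarieties such that, for each $j$ and any two parameters $f,g\in S_j$, there exist homeomorphisms $\varphi,\psi:\C^2\to\C^2$ with $\psi\circ f\circ\varphi=g$; here one must be slightly careful to upgrade local topological triviality over $S_j$ to the statement that all fibers over a connected $S_j$ are topologically equivalent, which follows by covering a path between two points of $S_j$ with trivializing neighborhoods. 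Third, since $\C^A$ is irreducible, exactly one stratum $S_j$ is dense; being locally closed and dense in an irreducible variety, it contains a nonempty Zariski open subset $\Omega\subset\C^A$. Any two $f,g\in\Omega\subset S_j$ are then topologically equivalent, which is the assertion of the lemma.

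The main obstacle — and the only place requiring genuine care — is the passage from Sabbah's results, which are typically phrased either locally in the parameter or in terms of finiteness of topological types rather than as a clean ``Zariski-open trivializing stratum'' statement, to the precise form stated here. Concretely, one needs (i) that the trivializing partition can be taken to consist of \emph{constructible} (hence, after refining, \emph{irreducible locally closed}) pieces, so that a dense stratum exists and contains a Zariski-open set; and (ii) that topological triviality of the family $F$ over a stratum genuinely yields homeomorphisms of the \emph{source and target} $\C^2$ intertwining $f$ and $g$, i.e. topological equivalence in the sense defined in the introduction, rather than merely fiberwise homeomorphism of some associated space. Both are contained in or immediate from~\cite{Sab83}, but I would spell out the reduction explicitly; no hard new argument is needed beyond this bookkeeping.

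Alternatively, if one prefers to avoid re-deriving the stratification, I would simply cite the finiteness of topological types in a polynomial family together with the fact that the locus of parameters realizing any fixed topological type is constructible (again from~\cite{Sab83,aoki1980topological}); finiteness then forces one such locus to be dense, and its interior in the Zariski topology furnishes $\Omega$. Either route is short; the substance of the paper lies in \emph{identifying} which topological type the generic $f\in\C^A$ has, not in the existence of $\Omega$.
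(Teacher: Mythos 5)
Your proposal is correct and takes essentially the same approach as the paper: both reduce to Sabbah's result in~\cite{Sab83} and exploit that $\C^A$ is an irreducible affine variety (equivalently, that its nonempty Zariski opens are connected). The paper is a bit more direct — it embeds $\C^A$ as a coordinate subspace of the ambient $\C^{N_d}$, cites verbatim the statement on p.~5 of~\cite{Sab83} that any affine subset $S\subset\C^{N_d}$ admits a Zariski open whose connected components carry a single topological type, and concludes by connectedness — whereas you re-derive that Zariski open from Sabbah's stratification/finiteness and the existence of a dense constructible stratum, but the underlying content is identical.
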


\begin{proof}
Consider the space $\C^{N_d}$ of polynomials maps $f:\CtC$ having degree at most $d$. On the one hand, for every $A\in\ccCgeq$, the space $\C^A$ has a canonical embedding $\C^A\hookrightarrow\C^{N_d}$, as the set of all maps $g$ with some coefficients are substituted by zero. We identify $\C^A$ with a coordinate linear subspace of $\C^{N_d}$. On the other hand it is known (see e.g.,~\cite[pp. 5]{Sab83}) that for every affine subset $S\subset\C^{N_d}$, there exists a Zariski open $\Omega\subset S$ in which any two polynomial maps $f,g\in\Omega $ in a connected component of $\Omega$ will be topologically equivalent. To conclude the proof, notice that every Zariski open of $\C^A$ is connected. 
\end{proof}
\begin{definition}\label{def:generic}
In the notation of Lemma~\ref{lem:main1}, a map $f\in\C^A$ is called \emph{generic} if it belongs to $\Omega$.
\end{definition}

Next, we describe the numerical data that are shared among topologically equivalent maps. Let us first introduce some notations and definitions.
\begin{definition}\label{def:multipl-singularity}
Let $C\subset\C^2$ be a planar algebraic curve given by a bivariate polynomial $P\in\C[z_1,z_2]$. The smallest $m\in\N$ for which the homogeneous part, $P_m$, of $P$ is non-zero is called the \emph{multiplicity} of $P$ at the origin. We will denote it by $m_0(P)$ or $m_0(C)$. Any linear factor of $P_m$ is called a \emph{tangent} to $C$ at the origin. We define the \emph{multiplicity} $m_p(P)$ of $P$ at $p$ as $m_0(\tilde{C})$, with $\tilde{C}$ being the translation of $C$ that sends $p$ to the origin. We say that $p$ is a \emph{singular point} of $C$ if $P = \partial P/\partial z_1 = \partial P/\partial z_2 = 0$, and smooth otherwise. We have $m_p(P) = 1$ if and only if $p$ is smooth.
\end{definition}
A \emph{simple node}, for example, is an isolated singularity satisfying $m_p(P) = 2$ and such that $C$ has exactly two different tangents at $p$.

\begin{definition}\label{def:int-mult-Milnor}
Let $C,D\subset\C^2$ be two curves defined by polynomials $P,Q\in\C[z_1,z_2]$. For a point $p \in C\cap D$, we define the \emph{intersection multiplicity} of $P$ and $Q$ at $p$ to be
\begin{align*}
\mu_p(P,Q)&:=\dim_\C\mathscr{O}_p/\langle P,~Q\rangle \in\N\cup\{\infty\},
\end{align*} where $\mathscr{O}_p$ is the local ring at $p$. For a point $p \in C$, we define the \emph{Milnor number} of $C$ at $p$ to be
\begin{align*}
\mu_p(P)&:=\dim_\C\mathscr{O}_p/\langle \partial P/\partial z_1,~ \partial P/\partial z_2\rangle \in\N\cup\{\infty\}.
\end{align*}
\end{definition} 

If, for example, $p\in C$ is a singularity satisfying $m_p(P) = 2$, with only one tangent $L$ to $C$ at $p$, and $\mu(P,L)=3$, then $p$ is called an \emph{ordinary cusp}. A \emph{mild singularity} of an algebraic curve $C\subset\C^2$ is either an ordinary cusp or a simple node. 

\begin{remark}\label{rem:cusp-node_Milnor}
It is well known that $\mu_p(P)=1$ if $p$ is a simple node, and $\mu_p(P)=2$ if it is an ordinary cusp. 
\end{remark} 

Let $f:\C^2\ni z:=(z_1,z_2)\to (f_1(z)~,f_2(z))\in\C^2$ be a polynomial map. Each of the critical locus $\cCf:=\{\det\Jac_z f=0\}$, discriminant $\cDf$, and non-properness set $\cS_f$ is a Zariski closed in $\C^2$, with $\cCf$ and $\cS_f$ always being curves~\cite{Jel93}. Then, there exists an integer $k\in\N$, satisfying $\#f^{-1}(w) = k$ for any $w\in\C^2\setminus\cDf\cup\cS_f$. We call $k$ the \emph{topological degree} of $f$.

\begin{definition}\label{def:top-mult-crit}
Given an irreducible component $C$ of $\cCf$
, the map $f_{|_U}:U\to f(U)$ is a ramified cover of degree $m$ for any small enough neighborhood $U$ of any point $p\in C$. The \emph{topological multiplicity} of $C$ is defined to be the value $m-1$. The \emph{topological multiplicity} of an irreducible component $K\subset\cDf$ is defined to be the sum of all topological multiplicities of components $C\subset \cCf$ mapping to $K$. We use the convention that the topological multiplicity of an empty set is always zero.

For any irreducible component $S\subset\cS_f$ there is a finite set $\sigma\subset S$ in which for any two $w,\tilde{w}\in S\setminus \sigma$, we have $\# f^{-1}(w)=\# f^{-1}(\tilde{w})$~\cite{EHT21}. Any $w\in S\setminus\sigma$ is called  \emph{generic point of $S$}. The \emph{topological multiplicity} of an irreducible component of $\cS_f$ is the difference $k-\#f^{-1}(w)$, where $w$ is any generic point of $S$ and $k$ is the topological degree of $f$.
\end{definition}

Let $\Irr(K)$ denote the set of irreducible components of an algebraic curve $K$, and let $\Sing(K)$ denote the set of its isolated singularities. The following result is proven at the end of this section.

\begin{proposition}\label{prp:separator}
Let $f,g:\CtC$ be two polynomial maps satisfying
\begin{equation}\label{dia:commutative}
\psi\circ f\circ\varphi = g,
\end{equation} for some homeomorphisms $\varphi,\psi:\C^2\to\C^2$. Then, the following assertions hold:

\begin{enumerate}[label=(\alph*)]

	\item\label{it:top-degree} The topological degree of $f$ is equal to the topological degree of $g$.\\
	
	\item\label{it:Jelonek-components} The maps below are bijections 
	that preserve the topological multiplicities and geometric genera:
\begin{align*}
\Irr(\cCg)& \longrightarrow\Irr(\cCf),&& &
\Irr(\cDf)& \longrightarrow\Irr(\cDg), && &
\Irr(\cS_f)&\longrightarrow\Irr(\cSg),\\ 
K&\longmapsto \varphi(K), && &
K&\longmapsto \psi(K), && &
K&\longmapsto \psi(K).
\end{align*}

		\item\label{it:jacobian-multiplicity} For any subsets $ \kappa\subset\Irr(\cCg)$, $ \delta\subset\Irr(\cDf)$, and $ \sigma\subset\Irr(\cS_f)$, forming unions of irreducible algebraic curves in $\C^2$, the maps below are bijections that preserve the Milnor numbers:
\begin{align*}
\Sing(\kappa)& \longrightarrow\Sing(\varphi (\kappa)),&& & 
\Sing(\delta)&\longrightarrow\Sing(\psi(\delta)), && &
\Sing(\sigma)&\longrightarrow\Sing(\psi(\sigma)),\\ 
p&\longmapsto \varphi(p), && &
p&\longmapsto \psi(p), && &
p&\longmapsto \psi(p).
\end{align*} 	
\end{enumerate} 
\end{proposition}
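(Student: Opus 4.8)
The plan is to prove the three assertions by exploiting the fact that the sets $\cCf, \cDf, \cS_f$ are intrinsically characterized by the map $f$ in ways that are preserved under pre- and post-composition with homeomorphisms. The key conceptual point is that each of the following notions is \emph{topological} in nature: the number of preimages of a point, the local ramification degree of a covering, the local topology of a plane curve germ (hence its Milnor number, up to the classical fact that $\mu_p$ of a plane curve singularity is a topological invariant of the embedded germ), and the geometric genus of an irreducible curve (which equals the genus of the normalization, determined by the homeomorphism type of the curve as a topological space together with its one-point compactification behaviour).

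First I would establish \ref{it:top-degree}. Since $\varphi$ and $\psi$ are homeomorphisms, they carry Zariski-open dense subsets to dense subsets; in particular $\varphi$ maps $\C^2 \setminus \varphi^{-1}(\cDf \cup \cS_f)$ homeomorphically, and for $w \notin \cDg \cup \cSg$ the equation $\psi \circ f \circ \varphi = g$ gives $g^{-1}(w) = \varphi^{-1}(f^{-1}(\psi(w)))$, so $\#g^{-1}(w) = \#f^{-1}(\psi(w))$. One must check that $\psi(w)$ avoids $\cDf \cup \cS_f$ for generic such $w$; this follows because $\psi$ must send the ``branch+non-properness locus'' of $g$ onto that of $f$ --- indeed $\cDf \cup \cS_f$ is exactly the set of $w$ near which $f$ fails to be a covering map onto its image with constant fibre cardinality, a purely topological description, and the analogous statement holds for $g$. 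Hence the generic fibre cardinalities agree.

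Next, for \ref{it:Jelonek-components}, I would argue that $\cCf$ is the topological closure of the set of points at which $f$ is not a local homeomorphism (away from where fibres jump), $\cS_f$ is characterized topologically as in its definition in the excerpt (limits of $f$ along sequences escaping to infinity), and $\cDf = f(\cCf)$; each of these descriptions is manifestly invariant under the substitution $f \mapsto \psi \circ f \circ \varphi$. Therefore $\varphi$ maps $\cCg$ homeomorphically onto $\cCf$, and $\psi$ maps $\cDf, \cS_f$ homeomorphically onto $\cDg, \cSg$. A homeomorphism of plane curves sends irreducible components to irreducible components (irreducibility of an algebraic curve is equivalent to connectedness of the normalization, but more simply: irreducible components are the closures of the connected components of the smooth locus, a topological notion), hence the indicated maps on $\Irr(\cdot)$ are bijections. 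Geometric genus is preserved because the normalization of an irreducible curve is a topological invariant of the curve together with its embedding's behaviour at infinity, and here we are working inside $\C^2$ with ambient homeomorphisms. For topological multiplicity: the local degree $m$ of the ramified cover $f_{|_U} : U \to f(U)$ around $p \in C \subset \cCf$ equals the generic cardinality of the fibre of $f_{|_U}$, which transports verbatim under $\varphi, \psi$ using the commutative relation; summing over components mapping to a fixed $K \subset \cDf$ then gives the statement for discriminant components, and for $\cS_f$ one uses that $\#f^{-1}(w)$ at a generic point $w$ of a component and the topological degree $k$ are both topological (the latter by part \ref{it:top-degree}).

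Finally, for \ref{it:jacobian-multiplicity}, the point is that $\Sing(K)$ --- the isolated singular points of a curve $K$ --- is a topological invariant: $p$ is an isolated singular point of $K$ exactly when $K$ is not a topological manifold at $p$ but is at all nearby points. Hence $\varphi$ restricts to a bijection $\Sing(\kappa) \to \Sing(\varphi(\kappa))$ and likewise for $\psi$; here one uses that $\varphi(\kappa)$ is again a union of irreducible components of $\cCf$, by \ref{it:Jelonek-components}. The Milnor number $\mu_p(P)$ of a plane curve singularity is, by the classical theorem of Lê--Ramanujam in dimension two (or more elementarily, because $\mu_p$ equals the first Betti number of the Milnor fibre, which is determined by the embedded topological type, e.g. by the link of the singularity), a topological invariant of the germ $(K, p) \subset (\C^2, p)$; since $\varphi, \psi$ are ambient homeomorphisms they preserve the embedded topological type and hence the Milnor number. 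I expect the main obstacle to be the careful justification that the \emph{geometric genus} of a component and the \emph{Milnor number} are genuinely topological invariants in the precise sense needed here --- for the genus one should phrase it via the homeomorphism type of the smooth (compactified, or one-point-compactified) model, and for the Milnor number one must either invoke the topological invariance of $\mu$ for plane curve germs or reduce to an invariant (such as the $\delta$-invariant plus number of branches via $\mu = 2\delta - r + 1$) that is visibly topological; assembling these citations cleanly, rather than any new computation, is where the real work lies.
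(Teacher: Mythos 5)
Your overall strategy matches the paper's: reduce to showing that the ambient homeomorphisms $\varphi,\psi$ carry $\cCg,\cDf,\cS_f$ onto $\cCf,\cDg,\cSg$, and then invoke topological invariance of the curve-level and point-level data. However, there is a concrete error in the step you rely on for item~\ref{it:jacobian-multiplicity} (and implicitly also for identifying $\Irr$ via the ``smooth locus'' in item~\ref{it:Jelonek-components}): you claim that $p$ is a singular point of a plane curve $K$ exactly when $K$ fails to be a topological manifold at $p$. This is false for unibranch singularities. At an ordinary cusp $\{y^2 = x^3\}$, the normalization $t\mapsto(t^2,t^3)$ is a homeomorphism onto its image near the origin, so $K$ \emph{is} a topological 2-manifold at $0$, yet $0$ is a singular point with $\mu_0 = 2$. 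Singularity of a plane curve point is not an intrinsic topological property of $K$; it is an \emph{embedded} property, detected by the link pair $(\partial B_\epsilon,\partial B_\epsilon\cap K)\subset S^3$ (the cusp has a trefoil link, not an unknot). You already use exactly this embedded invariant to get topological invariance of the Milnor number, so the fix is merely to use it consistently: an ambient homeomorphism preserves the embedded link type, the link detects smoothness versus singularity, and the link determines $\mu_p$. This is precisely what the paper does by reducing to Saeki's result on link-equivalent plane curve germs.

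The paper isolates all of this in a separate lemma (Lemma~\ref{lem:homeomorph=singul}), whose proof does two things you gesture at but do not pin down: it invokes \cite{Saeki89} for the Milnor-number/singular-locus preservation, and for the genus it resolves the singular points by blowups, extends the restricted homeomorphism across the exceptional sets on the strict transforms, and reads off the genus as a homeomorphism invariant of the resulting smooth (punctured) curves. Your ``normalization is a topological invariant'' phrasing is the right intuition but needs this kind of justification, since a homeomorphism of singular curves does not automatically lift to one of their normalizations. The rest of your argument -- using that $\cS_f$ is topologically defined, that $\cCf$ is the non-local-homeomorphism locus of $f$ (correct, since for holomorphic maps between equidimensional complex manifolds local homeomorphism is equivalent to local biholomorphism), and that the ramified-cover degree near a critical component transports through $\varphi,\psi$ -- is in line with the paper's proof of the proposition.
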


\begin{lemma}\label{lem:homeomorph=singul}
Let $C,D\subset \C^2$ be two algebraic curves in $\C^2$, and let $\varphi:\CtC$ be a homeomorphism sending $C$ to $D$. Then, there exists a bijection $\Irr(C)\longrightarrow\Irr(D)$, $K\longmapsto \varphi(K)$, that preserves the geometric genera, and a bijection $\Sing(C)\longrightarrow\Sing(D)$, $p\longmapsto \varphi(p)$ that preserves the Milnor numbers. 
\end{lemma}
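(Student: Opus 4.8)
The plan is to prove Lemma~\ref{lem:homeomorph=singul} by reducing both assertions to purely topological facts about the curves $C$ and $D$ as subsets of $\C^2$ (equivalently, of $\R^4$), so that the homeomorphism $\varphi$ can transport them.

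First I would establish the bijection on irreducible components. The key observation is that an algebraic curve $C\subset\C^2$ has finitely many singular points, and removing them leaves $C^{\mathrm{sm}} := C\setminus\Sing(C)$, a (possibly disconnected) topological $2$-manifold whose connected components are exactly the sets $K\setminus\Sing(C)$ for $K\in\Irr(C)$ — this uses that each irreducible affine curve is connected (e.g.\ in the Zariski, hence Euclidean, topology, via irreducibility) and that two distinct components meet only in finitely many points. Since $\varphi$ is a homeomorphism with $\varphi(C)=D$, it restricts to a homeomorphism $C\to D$; because $\Sing(C)$ and $\Sing(D)$ are intrinsically characterized as the non-manifold points of $C$ and $D$ respectively (a point $p\in C$ is singular iff no neighborhood of $p$ in $C$ is homeomorphic to $\R^2$ — smoothness of the curve at $p$ gives a manifold point, while at a genuine singularity the local structure is a cone on a link with more than two ends, or with non-trivial topology), $\varphi$ maps $\Sing(C)$ bijectively onto $\Sing(D)$ and hence $C^{\mathrm{sm}}$ homeomorphically onto $D^{\mathrm{sm}}$. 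Therefore $\varphi$ induces a bijection between connected components, i.e.\ a bijection $\Irr(C)\to\Irr(D)$, $K\mapsto\varphi(K)$ (after taking closures). For the geometric genus: the geometric genus of an irreducible component $K$ equals the genus of its normalization $\widetilde K$, and $\widetilde K$ is obtained from the open Riemann surface $K^{\mathrm{sm}}$ by adding finitely many punctures (the points over $\Sing(C)\cap K$ and the points at infinity); the genus of $\widetilde K$ is then a topological invariant of the surface $K^{\mathrm{sm}}$, namely $(2 - b_1 - (\text{number of ends}))/2$ suitably interpreted, and more robustly it is determined by the one-point (or end-)compactification of $K^{\mathrm{sm}}$, which $\varphi$ respects. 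So $\varphi$ preserves geometric genera.

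Second I would handle the Milnor numbers at singular points. The cleanest route is to invoke the Milnor fibration / link description: for $p\in\Sing(C)$ with $C$ locally defined by $P$, the Milnor number $\mu_p(P)$ is recovered from the topology of the link $L_p := C\cap S^3_\varepsilon(p)$, a link in the $3$-sphere; indeed, for plane curve singularities $\mu_p$ is determined by the embedded topological type of $(S^3_\varepsilon, L_p)$ (via the classical formula $\mu = 2\delta - r + 1$ relating $\mu$ to the delta-invariant $\delta$ and the number of branches $r$, both of which are topological invariants of the link, or alternatively via the first Betti number of the Milnor fiber). Since $\varphi$ is a homeomorphism of $\C^2$ sending $(C,p)$ to $(D,\varphi(p))$, it sends a small ball around $p$ to a neighborhood of $\varphi(p)$ and carries the local embedded topological type of $(C,p)$ to that of $(D,\varphi(p))$; hence $\mu_p(P) = \mu_{\varphi(p)}(Q)$ where $Q$ defines $D$. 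This gives the bijection $\Sing(C)\to\Sing(D)$ preserving Milnor numbers.

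The main obstacle I anticipate is making rigorous the claim that the Milnor number of a plane curve singularity is a topological (as opposed to merely analytic) invariant of the germ — this is true and classical (it is a consequence of the theorem, essentially due to Lê and Ramanujam in general and well-known for curves, that the embedded topological type of a plane curve germ determines, and is determined by, the multiplicities of the branches and their pairwise intersection numbers, equivalently the Puiseux pairs, which in turn determine $\delta$ and $r$ and hence $\mu$), but it requires care to cite correctly and to note that $\varphi$ need only be a homeomorphism of the ambient $\C^2$, not a diffeomorphism. A secondary technical point is ensuring that the ``ends at infinity'' do not interfere: one should fix a large ball $B$ so that $C\cap B$ is a deformation retract of $C$ and $\varphi(B)$ still has this property for $D$ (possible after enlarging, since $\varphi$ is a homeomorphism and these curves have finitely many ends), so that all the local and global topological data used above are captured within a compact region that $\varphi$ respects up to the needed level of structure.
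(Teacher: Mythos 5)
Your overall strategy—identify irreducible components with connected components of the smooth locus, read off genera from the end-compactification, and extract Milnor numbers from the local embedded topological type—is close in spirit to the paper's proof, though the mechanics differ. The paper obtains the Milnor-number-preserving bijection on singular sets via link equivalence and a theorem of Saeki, and then obtains the component and genus bijection by passing to resolutions whose strict transforms are smooth and disjoint, so that the induced homeomorphism off the exceptional divisors extends to a homeomorphism of the resolutions. You instead delete singular points and argue on the open smooth loci; that is a legitimate alternative, and both routes rest on the same invariance principle: an ambient homeomorphism of $\C^2$ carrying $C$ to $D$ preserves the local conical (link) structure of the pair $(\C^2,C)$ at every point.

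There is, however, a genuine gap in the step producing the bijection on singular sets, which you need before you can split off $C^{\mathrm{sm}}$. You claim $\Sing(C)$ is \emph{intrinsically} characterized as the non-manifold points of $C$, i.e.\ that $p\in\Sing(C)$ iff no neighborhood of $p$ in $C$ is homeomorphic to $\R^2$. This is false for unibranch singularities: an ordinary cusp $y^2 = x^3$ is parametrized by $t\mapsto(t^2,t^3)$, a homeomorphism of a disk onto a neighborhood of the cusp in $C$, so the cusp is a topological manifold point of $C$ even though it is singular (and the cone on a trefoil is a disk, so your ``non-trivial topology'' caveat does not rescue the claim). Thus the restricted homeomorphism $\varphi|_C:C\to D$ alone does not detect $\Sing(C)$. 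The repair is to use the \emph{ambient} $\varphi:\C^2\to\C^2$ from the outset, as you already do for Milnor numbers: a point $p\in C$ is singular iff the embedded link $(S^3_\varepsilon(p),\,C\cap S^3_\varepsilon(p))$ is not the unknot, and the local embedded topological type of $(\C^2,C)$ at $p$ is carried to that of $(\C^2,D)$ at $\varphi(p)$. In fact your Milnor-number argument already establishes $\mu_p(C)=\mu_{\varphi(p)}(D)$, and since $\mu_p\geq 1$ exactly at singular points, $\varphi(\Sing(C))=\Sing(D)$ follows; so that part of the proof should come first, and the component bijection should be deduced from it rather than from the incorrect intrinsic criterion.
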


\begin{proof}
 For any small ball $U$ containing $p$, the pair $(\partial U,~\partial U \cap C)$ is homeomorphic to $(\partial \varphi(U),~\partial \varphi(U) \cap D)$. Then, the singular points $p$ and $\varphi(p)$ of $C$ and $D$ respectively are \emph{link equivalent} in the sense of~\cite{Saeki89}. Then~\cite[Corollary 2]{Saeki89} implies the existence of a Milnor-number  preserving bijection $\Sing(C)\to\Sing(D)$. 

Consider the two resolutions of singularities $\bl_1:X\to\C^2$, $\bl_2:Y\to\C^2$ obtained as blowups of $\C^2$ at $\sigma:=\Sing(S)$ and at $\tau:=\Sing(D)$ respectively. Let $\tilde{C}\subset X$, $\tilde{D}\subset Y$ be the corresponding strict transforms of $C,D\subset\C^2$, and $\Sigma_1,\Sigma_2$ be the collections of the exceptional divisors $\bl^{-1}_1(\sigma)$, $\bl^{-1}_2(\tau)$ respectively. Then, the restrictions $\bl_{1|_{X\setminus \Sigma_1}}$ and $\bl_{2|_{Y\setminus \Sigma_2}}$ are homeomorphisms sending $\tilde{C}$ to $C$ and $\tilde{D}$ to $D$. So is the restricted map $\phi:=\varphi_{|_{\C^2\setminus \sigma}}:\C^2\setminus\sigma\to\C^2\setminus\tau$ a homeomorphism. Therefore, the map 
\[
F:=\bl^{-1}_{2|_{Y\setminus \Sigma_2}}~\circ~\phi~\circ~\bl_1:~~X\setminus \Sigma_1\longrightarrow Y\setminus \Sigma_2
\] is also a homeomorphism sending $\tilde{C}\setminus \Sigma_1$ onto $\tilde{D}\setminus \Sigma_2$. Since irreducible components of $\tilde{C}$ and $\tilde{D}$ are smooth and disjoint, the map $F$ extends to a homeomorphism $X\to Y$ that sends irreducible components of $\tilde{C}$ onto those of $\tilde{D}$. This shows that $\varphi_{|_{C}}$ decomposes onto several homeomorphisms $\varphi_1,\ldots,\varphi_k$, one for each irreducible component $C_i\subset C$, where $\varphi_i(C_i)$ is an irreducible component of $D$. This gives rise to a bijection $\Irr(C)\to\Irr(D)$ that preserves the geometric genera. 
\end{proof}

\begin{proof}[Proof of Proposition~\ref{prp:separator}]
Item~\ref{it:top-degree} is obvious. As for the other items, we start with $\cS_f$.

From the definitions, for any $q\in\cS_f $ and neighborhood $V\ni q$, the image of the Euclidean closure $f^{-1}(\overline{V})$ is also not compact. Thus, so is the set $g^{-1}(\psi(\overline{V}))$ as it coincides with $\varphi^{-1}(f^{-1}(\overline{V}))$. This shows that $\psi(q)\in \cS_g$. Since $\psi^{-1}$ is also a homeomorphism, we obtain that $\cS_g = \psi(\cS_f)$. Then, we get Item~\ref{it:jacobian-multiplicity} for $\cS_f$ by applying  Lemma~\ref{lem:homeomorph=singul} iteratively on each subset $\sigma\subset\Irr(\cS_f)$. To show Item~\ref{it:Jelonek-components}, note that for every $K\in\Irr(\cS_f)$, we get $\psi(K)\in\Irr(\cS_g)$, and thus $\#f^{-1}(w)=\#g^{-1}(\psi(w))$ holds for a generic point $w\in K$. This concludes that $K$ and $\psi(K)$ have the same topological multiplicities.

Now we show Items~\ref{it:Jelonek-components} and~\ref{it:jacobian-multiplicity} only for the critical points as the proof for the discriminants follows similar steps. Let $K\in\Irr(\cCf)$ and let $m$ be its topological multiplicity. Then, for any $p\in C$ outside any other component of $\cCf$, and for any small-enough neighborhood $U$ of $p$, the map 
\begin{equation}\label{eq:equiv-covers}
(\psi\circ f\circ \varphi)_{|_{\tilde{U}}}=g_{|_{\tilde{U}}}
\end{equation} is an unramified cover of $\tilde{V}:=g(\tilde{U})$ of degree $m$, where $\tilde{U}:=U\setminus K$. Since $\varphi$ and $\psi$ are homeomorphisms, the map $f_{|_{\varphi(\tilde{U})}}$ is also an unramified cover of $\psi(\tilde{U})$ of degree $m$. Therefore, we get $\varphi(p)\in \cCf$. Similarly, as before, we deduce that $\cCf= \varphi(\cC_g)$. Then, Items~\ref{it:Jelonek-components} and~\ref{it:jacobian-multiplicity} follow thanks to Lemma~\ref{lem:homeomorph=singul}. 
\end{proof} Let $\cMCt$ denote the space of all polynomial maps $\CtC$. The following result is a straightforward consequence of Proposition~\ref{prp:separator}. 
\begin{corollary}\label{cor:main}
There exists a (possibly infinite) index set $I\subset\R$ and a map
\[
\Upsilon:\cMCt\longrightarrow\N^I,
\] sending $f\in\cMCt$ onto a list of values encoding all its numerical invariants from Proposition~\ref{prp:separator}, such that if $\Upsilon(f)\neq\Upsilon(g)$, then $f$ and $g$ are not topologically equivalent.
\end{corollary}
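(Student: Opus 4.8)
The plan is to extract, from each curve attached to a polynomial map, a finite packet of integers that Proposition~\ref{prp:separator} guarantees to be a topological invariant, and to bundle these packets into a single target $\N^I$. Concretely, I would first fix a convention for enumerating irreducible components and isolated singular points of a planar curve in a way that depends only on the isomorphism type of the curve as an abstract reduced curve with its singular set: for instance, order the components by nondecreasing geometric genus, then by nondecreasing topological multiplicity, breaking remaining ties arbitrarily; likewise order the singular points by nondecreasing Milnor number. Doing this simultaneously for $\cC_f$, $\cD_f$ and $\cS_f$ produces, for each $f\in\cM(\C^2,\C^2)$, a finite list: the topological degree $k$, the three sequences of (geometric genus, topological multiplicity) pairs for the components of $\cC_f$, $\cD_f$, $\cS_f$, and the three sequences of Milnor numbers of the singular points of $\cC_f$, $\cD_f$, $\cS_f$.

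Next I would assemble these finitely many integers into a point of $\N^I$. Since the lists have variable but finite length, one clean way is to take $I$ to be a countable set, say $I=\N$ after choosing a bijection, and to encode the whole data as a single sequence with finitely many nonzero entries — for example, reserve coordinate blocks of $I$ for ``$k$'', for ``$j$-th component of $\cC_f$: genus'', ``$j$-th component of $\cC_f$: topological multiplicity'', and analogously for $\cD_f$ and $\cS_f$, and for ``$j$-th singular point of $\cC_f$: Milnor number'', etc. This yields a well-defined map $\Upsilon:\cM(\C^2,\C^2)\to\N^I$. (The corollary only claims existence of some index set $I\subset\R$ and some such map, so no optimality or canonicity is required; a single copy of $\N$ embedded in $\R$ suffices.)

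It then remains to verify the separation property: if $\psi\circ f\circ\varphi=g$ for homeomorphisms $\varphi,\psi:\C^2\to\C^2$, then $\Upsilon(f)=\Upsilon(g)$. This is exactly where Proposition~\ref{prp:separator} is applied. Item~\ref{it:top-degree} gives equality of the topological degrees. Item~\ref{it:Jelonek-components} supplies, for each of $\cC$, $\cD$, $\cS$, a bijection between irreducible components of the two curves preserving both geometric genus and topological multiplicity; hence the multisets of (genus, topological multiplicity) pairs agree, and since our enumeration of components is a function of that multiset alone, the corresponding coordinate blocks of $\Upsilon(f)$ and $\Upsilon(g)$ coincide. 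Item~\ref{it:jacobian-multiplicity}, applied with $\kappa=\Irr(\cC_g)$, $\delta=\Irr(\cD_f)$, $\sigma=\Irr(\cS_f)$, gives Milnor-number-preserving bijections $\Sing(\cC_f)\to\Sing(\cC_g)$, $\Sing(\cD_f)\to\Sing(\cD_g)$, $\Sing(\cS_f)\to\Sing(\cS_g)$, so the multisets of Milnor numbers agree and the remaining coordinate blocks match. Contrapositively, $\Upsilon(f)\neq\Upsilon(g)$ forces $f$ and $g$ to be topologically inequivalent.

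The only genuinely delicate point — and the one I would flag as the main obstacle — is the bookkeeping that makes $\Upsilon$ truly well defined, i.e.\ ensuring that the ordering of components and singular points is chosen purely from data that Proposition~\ref{prp:separator} transports along a topological equivalence, so that arbitrary tie-breaking never sneaks in an artificial dependence on the representative. The clean fix is the standard one: record \emph{multisets} of invariants rather than ordered tuples, and only at the very end fix any enumeration of a multiset to land in $\N^I$; since a topological equivalence induces a bijection respecting all recorded invariants, it induces an equality of multisets, which is all that is needed. With that structural choice in place the proof is immediate from Proposition~\ref{prp:separator}, and it is fair to call this ``a straightforward consequence'' as stated.
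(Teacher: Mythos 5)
Your proof is correct, and the key step — feeding Proposition~\ref{prp:separator} into the observation that a topological equivalence preserves the relevant multisets, hence any canonical encoding of them — is exactly the content the paper invokes when it calls the corollary ``a straightforward consequence.'' Two remarks on how your route diverges from the paper's intended construction (sketched in Example~\ref{ex:map-upsilon}).

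First, the encoding. You order components and singularities by their invariants and record value-lists, then observe that this factors through the underlying multiset so tie-breaking is harmless. The paper instead indexes $I$ by pairs $(O,P)$ of an object type and a numerical property, and sets each coordinate $\Upsilon_i(f)$ to be a \emph{count}. The counting convention sidesteps the ordering discussion entirely and is arguably cleaner, but your multiset-then-enumerate version is an equivalent encoding; both are valid.

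Second, the richness of $\Upsilon$. The corollary claims $\Upsilon$ encodes ``all its numerical invariants from Proposition~\ref{prp:separator}.'' Your $\Upsilon$ applies Proposition~\ref{prp:separator}\ref{it:jacobian-multiplicity} only with $\kappa,\delta,\sigma$ equal to the full sets of components, so it records Milnor numbers of singularities of $\cCf,\cDf,\cS_f$ as whole curves. Item~\ref{it:jacobian-multiplicity} in fact applies to \emph{arbitrary} unions of components, and Example~\ref{ex:map-upsilon} explicitly uses this: e.g.\ the coordinate that counts pairs $\{X,Y\}$ of components of $\cS_f$ whose intersection has a prescribed pattern. That finer data is genuinely absent from your $\Upsilon$, so yours is strictly coarser. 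This does not affect the corollary's stated conclusion (you still prove the separation property), but the richer version is what Theorem~\ref{thm:main2} relies on when it factors $\Psi(A)$ through $\Upsilon$ via some $F$, so if you intend your $\Upsilon$ to be the one used later you would need to enlarge it to include these sub-curve invariants.
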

\begin{example}\label{ex:map-upsilon}
One may define the image $\Upsilon:=(\Upsilon_i)_{i\in I}$ so that each $i\in I$ labels a pair $(O,P)$ of one \emph{objects} $O$ corresponding to $f$ that satisfy a certain \emph{property} $P$, while the coordinate $\Upsilon_i(f)$ computes the number of the above pairs. An object $O$ can be either a singularity, or a tuple of irreducible components of curves appearing in Proposition~\ref{prp:separator}, and $P$ is a numerical value that for the Milnor number, geometric genus, topological multiplicity, or a given number of intersection points with a given multiplicity. For instance, for one of the indexes $i_0\in I$, the value $\Upsilon_{i_0}(f)$ is defined as the number of sets $\{X,Y\}$ of two irreducible components of $\cS_f$ (this is $O_{i_0}$) for which the intersection $X\cap Y$ consists of exactly three isolated points of multiplicity five (this is $P_{i_0}$). 
\end{example}
As we will show in Theorem~\ref{thm:discriminant} below, generic polynomial maps $f\in\cMCt$ have a tamed topology. This makes all but finitely many coordinates of $\Upsilon(f)$ in Corollary~\ref{cor:main} superfluous. 

\begin{definition}\label{def:topological-invariant}
A \emph{topological invariant} of a map $f\in\cMCt$ is the image $F\circ\Upsilon(f)\in\N^n$, where $F:\N^I\longrightarrow\N^n$ is any map.
\end{definition} 

\subsection{The polyhedral type for conical pairs}\label{sub:polyhedral_type}
We start by introducing several combinatorial identities.

\subsubsection{Newton number of singularities}\label{sss:not-prelim}
A \emph{polytope}, $\Pi$ in $\R^n$ is the convex hull of finitely many points $S\subset\R^n$. We say that $\Pi$ is a  \emph{lattice polytope}, if its vertices are points in $\Z^n$. 
A hyperplane $H$ is said to be \emph{supporting $\Pi$} if the closure of one of its half-spaces contains $\Pi$. This hyperplane determines a \emph{face} $F$ of $\Pi$, denoted $F\prec \Pi$, as the result of the intersection $\Pi\cap H$. Hence, $F$  minimizes a function $\alpha^*:\Pi\longrightarrow\R$, given by $(x_1,\ldots,x_n)\longmapsto \alpha_1x_1+\cdots+\alpha_nx_n$, where $\alpha:=(\alpha_1,\ldots,\alpha_n)$ is the normal vector of $H$ directed towards $\Pi$. We thus say that $\alpha$ \emph{supports} $F$, and we use $\Pi^\alpha$ in reference to $F$.

 We set $\Vol (\cdot)$ to be the scaled Euclidean volume of any object in $\R^2$, so that $\Vol(\sigma)=1/2$ if $\sigma$ is the simplex with vertices $(0,0)$, $(0,1)$, and $(1,0)$. Let $\Pi$ be the Newton polytope of a polynomial $P\in\C[z_1,z_2]$ and assume in what follows that $P$ is \emph{convenient}. That is, $\Pi$ intersects both coordinate axes of $\R^2$. Then $\Rtg\setminus\Pi $ is a union of two connected components, one of which, denoted by $\Sigma_0$, is bounded.  Let $a$ and $b$ be the distance from $\bm{0}$ to the common part of $\Pi$ intersected with the first and second coordinate axis respectively (see e.g., Figure~\ref{fig:N012}). We define the \emph{Newton number} $\cN(\Pi)$, or alternatively, $\cN(P)$, of $\Pi$ to be zero if $\Sigma_0$ is empty and otherwise we set
\[
\cN(\Pi):=2\cdot \Vol(\Sigma_0) -a-b+1.
\] Recall that $\mu_0(P)$ denotes the Milnor number of $P$ at the origin $\bm{0}\in\C^2$ (see Definition~\ref{def:multipl-singularity}). A convenient polynomial with Newton number zero, for example, can only be in one of the two rightmost cases represented in Figure~\ref{fig:N012}. That is, its germ at $\bm{0}$ is biholomorphic to $(x,y)\to(x^n+y)$. This gives the following observation.

\begin{lemma}\label{lem:Milnor=0}
A convenient polynomial $P\in\C[z_1,~z_2]$ satisfies $\cN(P)=0$ $\Longleftrightarrow$ $\mu_0(P)=0$.
\end{lemma}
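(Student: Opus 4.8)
The plan is to prove Lemma~\ref{lem:Milnor=0} by combining Kouchnirenko's theorem with the explicit formula for the Newton number, and then analyzing the boundary case directly. First I would recall Kouchnirenko's theorem for isolated plane-curve singularities: for a convenient polynomial $P\in\C[z_1,z_2]$, the Milnor number at the origin satisfies $\mu_0(P)\geq \cN(P)$, with equality when $P$ is \emph{non-degenerate} with respect to its Newton boundary (i.e., for each compact face $\gamma\prec\Pi$, the face polynomial $P_\gamma$ has no critical point in the torus $(\C^*)^2$). In particular $\cN(P)$ is always a lower bound, so $\cN(P)=0$ whenever $\mu_0(P)=0$. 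Conversely, if $\cN(P)>0$ I must show $\mu_0(P)>0$; this is where the real content lies.

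For the converse direction, I would argue as follows. The condition $\cN(P)=0$ forces the bounded region $\Sigma_0$ between $\Pi$ and the coordinate axes to be "degenerate" in a combinatorial sense: writing $\cN(\Pi)=2\Vol(\Sigma_0)-a-b+1$, the vanishing of this expression (together with $\cN(\Pi)=0$ in the convention when $\Sigma_0=\emptyset$) pins down the possible shapes of $\Pi$ near the origin. Concretely, I would show by an elementary lattice-geometry computation (Pick's theorem applied to $\Sigma_0$, or a direct case analysis on the edges of $\Pi$ meeting the axes) that $\cN(\Pi)=0$ holds precisely when the Newton boundary near $\bm 0$ consists of a single edge from $(n,0)$ to $(0,1)$ or from $(1,0)$ to $(0,n)$ — i.e. the two rightmost configurations in Figure~\ref{fig:N012}. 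In either of those cases the germ of $P$ at $\bm 0$ has a linear term (a monomial $z_i$ sits on the Newton boundary with a nonzero coefficient since $P$ is convenient, hence all lattice points on the axes up to $a$, resp. $b$, that lie in $\Pi$ contribute), so $\bm 0$ is a smooth point of $\{P=0\}$ — or, if $P(\bm 0)\neq 0$, the origin is not on the curve at all — and in both situations $\mu_0(P)=0$.

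The cleanest way to package the converse is actually to invoke the lower-bound half of Kouchnirenko only for the forward implication and handle the reverse purely combinatorially: once $\cN(\Pi)=0$ has been shown to force one of the two listed edge-configurations, a change of coordinates $(x,y)\mapsto(x^n+\text{higher order in }y,\;y)$ exhibits the germ as biholomorphic to $(x,y)\mapsto x^n+y$ (as asserted in the paragraph preceding the lemma), which manifestly has $\mu_0=0$. So the structure is: (i) $\mu_0(P)=0\Rightarrow \cN(P)=0$ by Kouchnirenko's inequality $\mu_0\geq\cN$; (ii) $\cN(P)=0\Rightarrow$ the Newton boundary near $\bm 0$ has the special form $\Rightarrow \mu_0(P)=0$ by the explicit biholomorphism.

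The main obstacle I anticipate is step (ii)'s lattice-geometric classification: verifying rigorously that $2\Vol(\Sigma_0)-a-b+1=0$ with $\Sigma_0$ a lattice polygon bounded by the two axes and the lower-left Newton boundary of $\Pi$ forces $\Sigma_0$ to be (a translate/reflection of) the triangle with vertices $\bm 0,(n,0),(0,1)$. This requires care because $\Sigma_0$ need not be a triangle a priori — the Newton boundary could have several edges — so one must rule out multi-edge boundaries by a Pick's-theorem count showing that any extra edge or any interior/boundary lattice point of $\Sigma_0$ beyond the minimal configuration makes $\cN(\Pi)$ strictly positive. I would also double-check the degenerate convention ($\cN=0$ when $\Sigma_0=\emptyset$, which corresponds to $\bm 0\notin\{P=0\}$) is consistent with $\mu_0(P)=0$, since the Milnor number of a polynomial at a point not on its zero locus is $0$ by definition.
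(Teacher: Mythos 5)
Your approach matches the paper's: the paper's "proof" is exactly the combinatorial observation in the paragraph preceding the lemma (that $\cN(\Pi)=0$ forces the Newton boundary near $\bm{0}$ to be a single edge with one endpoint at lattice distance one from the origin, so the germ is right-equivalent to $(x,y)\mapsto x^n+y$ and thus $\mu_0=0$), with the other direction following from Kouchnirenko's inequality. Your Pick's-theorem fleshing-out of the lattice classification is correct — one finds $\cN(\Pi)=2I+k$ with $I$ the number of interior lattice points of $\Sigma_0$ and $k$ the number of lattice points strictly between $(a,0)$ and $(0,b)$ on the Newton boundary, which immediately forces $I=k=0$ and hence a single primitive edge with $a=1$ or $b=1$ — and your remark on the $\Sigma_0=\emptyset$ convention is a real subtlety the paper glosses over.

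One slip worth fixing: at the end of your first paragraph you say the real content is in proving ``if $\cN(P)>0$ then $\mu_0(P)>0$''. That is the \emph{trivial} direction — it is literally $\mu_0(P)\geq\cN(P)>0$. What actually requires the combinatorial classification is $\cN(P)=0\Rightarrow\mu_0(P)=0$ (equivalently $\mu_0(P)>0\Rightarrow\cN(P)>0$); your second paragraph correctly addresses this, so the slip is purely in how you framed the division of labor. You should also state explicitly that $\cN(P)\geq 0$ for convenient $P$ — your Pick's count gives it when $\Sigma_0\neq\emptyset$, and the convention handles $\Sigma_0=\emptyset$ — since you silently use this to upgrade $\cN(P)\leq\mu_0(P)=0$ to $\cN(P)=0$.
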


\begin{figure}[htb]

\tikzset{every picture/.style={line width=0.75pt}} 

\begin{tikzpicture}[x=0.75pt,y=0.75pt,yscale=-1,xscale=1]

\draw  [color={rgb, 255:red, 0; green, 0; blue, 0 }  ,draw opacity=0 ][fill={rgb, 255:red, 65; green, 117; blue, 5 }  ,fill opacity=0.4 ][line width=0.75]  (277,210.84) -- (217,211.03) -- (205,103) -- (277,103.02) -- cycle ;
\draw  [color={rgb, 255:red, 0; green, 0; blue, 0 }  ,draw opacity=0 ][fill={rgb, 255:red, 65; green, 117; blue, 5 }  ,fill opacity=0.4 ][line width=0.75]  (305.86,138.73) -- (305.67,198.73) -- (413.7,210.73) -- (413.7,138.73) -- cycle ;
\draw [color={rgb, 255:red, 155; green, 155; blue, 155 }  ,draw opacity=1 ]   (205,210.84) -- (276.57,210.84) ;
\draw [color={rgb, 255:red, 155; green, 155; blue, 155 }  ,draw opacity=1 ][fill={rgb, 255:red, 155; green, 155; blue, 155 }  ,fill opacity=0.3 ]   (205,210.84) -- (205,103.35) ;
\draw  [fill={rgb, 255:red, 0; green, 0; blue, 0 }  ,fill opacity=1 ] (215.33,211.03) .. controls (215.33,210.11) and (216.08,209.36) .. (217,209.36) .. controls (217.92,209.36) and (218.67,210.11) .. (218.67,211.03) .. controls (218.67,211.95) and (217.92,212.7) .. (217,212.7) .. controls (216.08,212.7) and (215.33,211.95) .. (215.33,211.03) -- cycle ;
\draw  [fill={rgb, 255:red, 0; green, 0; blue, 0 }  ,fill opacity=1 ] (203.33,103) .. controls (203.33,102.08) and (204.08,101.33) .. (205,101.33) .. controls (205.92,101.33) and (206.67,102.08) .. (206.67,103) .. controls (206.67,103.92) and (205.92,104.67) .. (205,104.67) .. controls (204.08,104.67) and (203.33,103.92) .. (203.33,103) -- cycle ;
\draw    (217,211.03) -- (277,210.84) ;
\draw    (217,211.03) -- (205,103) ;
\draw [color={rgb, 255:red, 155; green, 155; blue, 155 }  ,draw opacity=1 ]   (305.86,210.73) -- (305.86,139.16) ;
\draw [color={rgb, 255:red, 155; green, 155; blue, 155 }  ,draw opacity=1 ][fill={rgb, 255:red, 155; green, 155; blue, 155 }  ,fill opacity=0.3 ]   (305.86,210.73) -- (413.35,210.73) ;
\draw  [fill={rgb, 255:red, 0; green, 0; blue, 0 }  ,fill opacity=1 ] (305.67,200.4) .. controls (306.59,200.4) and (307.34,199.65) .. (307.34,198.73) .. controls (307.34,197.81) and (306.59,197.07) .. (305.67,197.07) .. controls (304.75,197.07) and (304,197.81) .. (304,198.73) .. controls (304,199.65) and (304.75,200.4) .. (305.67,200.4) -- cycle ;
\draw  [fill={rgb, 255:red, 0; green, 0; blue, 0 }  ,fill opacity=1 ] (413.7,212.4) .. controls (414.62,212.4) and (415.37,211.65) .. (415.37,210.73) .. controls (415.37,209.81) and (414.62,209.07) .. (413.7,209.07) .. controls (412.78,209.07) and (412.03,209.81) .. (412.03,210.73) .. controls (412.03,211.65) and (412.78,212.4) .. (413.7,212.4) -- cycle ;
\draw    (305.67,198.73) -- (305.86,138.73) ;
\draw    (305.67,198.73) -- (413.7,210.73) ;
\draw [color={rgb, 255:red, 155; green, 155; blue, 155 }  ,draw opacity=1 ]   (60,211) -- (155.67,211) ;
\draw [color={rgb, 255:red, 155; green, 155; blue, 155 }  ,draw opacity=1 ]   (60,211.21) -- (60,127.09) ;
\draw  [fill={rgb, 255:red, 0; green, 0; blue, 0 }  ,fill opacity=1 ] (58.33,127) .. controls (58.33,126.08) and (59.08,125.33) .. (60,125.33) .. controls (60.92,125.33) and (61.67,126.08) .. (61.67,127) .. controls (61.67,127.92) and (60.92,128.67) .. (60,128.67) .. controls (59.08,128.67) and (58.33,127.92) .. (58.33,127) -- cycle ;
\draw  [fill={rgb, 255:red, 0; green, 0; blue, 0 }  ,fill opacity=1 ] (154.33,210.4) .. controls (154.33,209.48) and (155.08,208.73) .. (156,208.73) .. controls (156.92,208.73) and (157.67,209.48) .. (157.67,210.4) .. controls (157.67,211.32) and (156.92,212.07) .. (156,212.07) .. controls (155.08,212.07) and (154.33,211.32) .. (154.33,210.4) -- cycle ;
\draw  [fill={rgb, 255:red, 0; green, 0; blue, 0 }  ,fill opacity=1 ] (106.11,162.92) .. controls (106.11,162) and (106.86,161.26) .. (107.78,161.26) .. controls (108.7,161.26) and (109.44,162) .. (109.44,162.92) .. controls (109.44,163.84) and (108.7,164.59) .. (107.78,164.59) .. controls (106.86,164.59) and (106.11,163.84) .. (106.11,162.92) -- cycle ;
\draw  [fill={rgb, 255:red, 245; green, 166; blue, 35 }  ,fill opacity=0.4 ][line width=0.75]  (107.78,162.92) -- (156,210.4) -- (108.11,199.26) -- (84.11,186.59) -- (72.11,162.26) -- (60,127) -- cycle ;
\draw  [fill={rgb, 255:red, 0; green, 0; blue, 0 }  ,fill opacity=1 ] (82.44,186.59) .. controls (82.44,185.67) and (83.19,184.92) .. (84.11,184.92) .. controls (85.03,184.92) and (85.78,185.67) .. (85.78,186.59) .. controls (85.78,187.51) and (85.03,188.26) .. (84.11,188.26) .. controls (83.19,188.26) and (82.44,187.51) .. (82.44,186.59) -- cycle ;
\draw  [fill={rgb, 255:red, 0; green, 0; blue, 0 }  ,fill opacity=1 ] (106.44,199.26) .. controls (106.44,198.34) and (107.19,197.59) .. (108.11,197.59) .. controls (109.03,197.59) and (109.78,198.34) .. (109.78,199.26) .. controls (109.78,200.18) and (109.03,200.92) .. (108.11,200.92) .. controls (107.19,200.92) and (106.44,200.18) .. (106.44,199.26) -- cycle ;
\draw  [fill={rgb, 255:red, 0; green, 0; blue, 0 }  ,fill opacity=1 ] (70.44,162.26) .. controls (70.44,161.34) and (71.19,160.59) .. (72.11,160.59) .. controls (73.03,160.59) and (73.78,161.34) .. (73.78,162.26) .. controls (73.78,163.18) and (73.03,163.92) .. (72.11,163.92) .. controls (71.19,163.92) and (70.44,163.18) .. (70.44,162.26) -- cycle ;

\draw (158,213.8) node [anchor=north west][inner sep=0.75pt]  [font=\tiny]  {$a$};
\draw (60,214.4) node [anchor=north] [inner sep=0.75pt]  [font=\tiny]  {$0$};
\draw (58,127) node [anchor=east] [inner sep=0.75pt]  [font=\tiny]  {$b$};
\draw (67.5,194.2) node [anchor=north west][inner sep=0.75pt]  [font=\tiny]  {$\Sigma _{0}$};
\draw (205,214.4) node [anchor=north] [inner sep=0.75pt]  [font=\tiny]  {$0$};
\draw (201.33,103) node [anchor=east] [inner sep=0.75pt]  [font=\tiny]  {$b$};
\draw (413.7,215.8) node [anchor=north] [inner sep=0.75pt]  [font=\tiny]  {$a$};
\draw (305.86,214.13) node [anchor=north] [inner sep=0.75pt]  [font=\tiny]  {$0$};
\draw (302,198.73) node [anchor=east] [inner sep=0.75pt]  [font=\tiny]  {$1$};
\draw (219,214.43) node [anchor=north west][inner sep=0.75pt]  [font=\tiny]  {$1$};

\end{tikzpicture}

\caption{Some examples of convenient polytopes.}\label{fig:N012}
\end{figure}
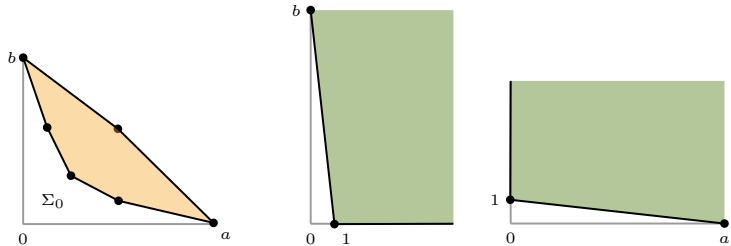

\subsubsection{Main result}\label{sub:statement} We start with the following notion.

\begin{definition}\label{def:conical}
Let $S$ be a set of five points in $\N^2\setminus\{\bm{0}\}$, and let $M(S)$ be a matrix in $\R^{5\times 5}$ with rows 
\[
(s_1,~s_2,~s_1^2,~s_1s_2,~s_2^2),
\] constructed from points $s:=(s_1,s_2)\in S$.  We say that $S$ is \emph{conical} if $M(S)$ is non-singular. A pair $(A_1,A_2)\in \ccPgeq$ is \emph{conical} if each of $A_1$ and $A_2$ contains a conical subset in $\N^2\setminus\{\bm{0}\}$. We use $\ccCgeq$ to denote the set of conical pairs in $\ccPgeq$.
\end{definition}

\begin{remark}\label{rem:conical}
The name chosen for Definition~\ref{def:conical} is justified as follows. If $S\subset\N^2\setminus\{\bm{0}\}$ is a set of five points, no three of which belong to a line, then there is a unique conic containing $S$. This conic $
1 + a_1x_1 + a_2x_2 + a_3x_1^2 + a_4x_1x_2 + a_5x_2^2$ is constructed from $S$ by solving for $\underline{a}:=(a_1,\ldots,a_5)$ the linear system 
\[
M(S)\cdot \underline{a} = -\underline{1}.
\] This shows that $\det M(S)\neq 0$. Conversely, the set $\big\{(i,j)\big\}_{1\leq i+j\leq 2}$, for example, is conical but three points belong to the line $x+y = 2$.  
\end{remark}
\begin{notation}
Let $N_*:=(\N^*)^2 $. For any subset $X\subset \R^2$, define the values $m_1(X),m_2(X)\in\N$ as follows:
\[
m_i(X):=\min\left\{\pi_i(X\cap N_*)~|~\pi_i:(x_1,x_2)\mapsto x_i\right\},~\text{for }i=1,2.
\] For any $A\in\ccPgeq$, we define $\max(0,~m_1(A_1)+ m_1(A_2)-1)$ and $\max(0,~m_2(A_1)+m_2(A_2) - 1)$ to be the \emph{vertical} and \emph{horizontal} gaps of $A$, and will be denoted by $m_v(A)$ and $m_h(A)$ respectively (see Example~\ref{ex:gaps}). We will sometimes shorten the notation by omitting the suffix $(A)$. 
\end{notation}

\begin{remark}\label{rem:gaps}
The polynomial $\det \Jac_z f$ is a product of the monomial $z_1^{m_v}\cdot z_2^{m_h}$ with a polynomial $Q\in\C[z_1,z_2]$ having non-zero constant term (i.e. $\det \Jac_z f = z_1^{m_v}\cdot z_2^{m_h}\cdot Q$ with $Q(0,0)\neq 0$).
\end{remark}

\begin{example}\label{ex:gaps}
The pair $A$ in Figure~\ref{fig:JJJAD} has a horizontal gap $2+2 - 1 = 3$, and vertical gap $\max(0,~0+0-1)=0$. For any $f\in\C^A$, we get $\det\Jac_z f$ is equal to $z_2^3\cdot Q$ for some polynomial $Q$ with a non-zero constant term.
\end{example}

\begin{notation}
If $\Pi$ is a lattice polytope in $\R^2$, we set $\hcir \Pi$ to be zero if $\dim\Pi<2$, and otherwise, to be the number of lattice points in $\Pi$ not belonging to any of its proper faces:
\[
\hcir\Pi:= \# \left(\Pi\setminus\bigcup_{\pi\precnsim\Pi} \pi\right)\cap\Z^2.
\] 
Now, if $\Pi'$ is another polytope, their \emph{Minkowski sum}, denoted by $\Pi\oplus\Pi'$, is the vector sum $\{\pi + \pi'~|~\pi\in \Pi,~\pi'\in \Pi'\}$. Their \emph{mixed volume}, denoted by $\MV(\Pi,\Pi')$, is a real-valued bi-linear, with respect to the Minkowski sum, the function of $(\Pi,\Pi')$, expressed as
\[
  \Vol(\Pi \oplus \Pi') - \Vol(\Pi) - \Vol(\Pi').
\]  We have $\MV(\Pi,\Pi')= 2\Vol(\Pi)$ whenever $\Pi = \Pi'$. 
\end{notation}
For any polynomial map $f:\CtC$, we use $\Csf$, and $\Ssf$ to denote the union of irreducible components of $\cCf$, and $\cS_f$, respectively, which are not contained in the coordinate axes. We also define $\Dsf:=f(\Csf)$. Note that each of $\Csf$, $\Dsf$, and $\Ssf$ has a convenient Newton polytope.

For any $A\in\ccPgeq$, we use $A^0$ to denote the pair $(A_1^0,A_2^0)$, where, for each $i=1,2$, the polytope $A_i^0\subset\R^2$ is the convex hull of $\{\bm{0}\}\cup A_i$.
\begin{theorem}\label{thm:discriminant}
To each conical pair $A\in\ccCgeq$, one can associate four convenient integer polytopes $\Sigma,\Delta,\Gamma,\Gamma'\subset\R^2$, and a dense subset $\Omega_A\subset\C^A$, so that for any generic $f\in\Omega_A$, the following assertions hold:
\begin{enumerate}[label=(\arabic*)]

	\item\label{it:top-multiplicity} The topological degree of $f$ is equal to $\MV(A^0)$,\\

	\item\label{it:crit_genus} the curve $\Csf$ has Newton polytope $\Sigma$, is smooth in $\C^2\setminus\{\bm{0}\}$, irreducible, has topolgical multiplicity one, satisfies $\mu_0(\Csf)=\cN(\Csf)$, and has geometric genus $\hcir\Sigma$,\\
	
		\item\label{it:crit_axes} the topological multiplicity of the horizontal and vertical lines of $\cCf\setminus\Csf$ are $m_h$ and $m_v$ respectively, and $f(\cCf\setminus\Csf)\cap\TT=\emptyset$,\\

	\item\label{it:discr_cusps} the curve $\Dsf$ has Newton polytope $\Delta$, is irreducible, has topological multiplicity one, its geometric genus equals $\hcir\Sigma$, has at most mild singularities  in $\C^2\setminus\{\bm{0}\}$, and their number is equal to
	\[
		\hcir\Sigma- \hcir\Delta,
	\] 		

	\item\label{it:non-properness} singularities of $\Ssf$ in $\C^2\setminus\{\bm{0}\}$ are simple nodes, and there are curves $\cR,\cR'\subset \C^2$ satisfying 
	\begin{align*}
		\cR\cup\cR' & =\Ssf, && & 
\mu_0(\cR\cup\cR')&=\cN(\Gamma\oplus\Gamma') && &
\#\cR\cap \cR'&= \MV(\Gamma,\Gamma')  ,\\
	\NP(\cR)  & = \Gamma, && & 
	\mu_0(\cR) & = \cN(\Gamma), && &
 \#\Sing(\cR\setminus\{\bm{0}\}) & = \hcir\Gamma,\\
 	\NP(\cR')  & = \Gamma', && & 
	\mu_0(\cR') & = \cN(\Gamma'), && &
 \#\Sing(\cR'\setminus\{\bm{0}\}) & = \hcir\Gamma'.
	\end{align*} 
\end{enumerate}
 
\end{theorem}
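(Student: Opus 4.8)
The plan is to assemble the statement from the three key inputs advertised in the introduction—Theorem~\ref{thm:critical-smooth} (affine singularities), Theorem~\ref{thm:discr-infty} (no boundary singularities of the discriminant), and Theorem~\ref{thm:non-properness-long} (structure of $\cS_f$)—together with the toric intersection-theoretic machinery of Bernstein, Kouchnirenko and Khovanskii. First I would fix a generic $f$ in the common refinement $\Omega_A$ of the dense open sets provided by those three theorems and by Lemma~\ref{lem:main1}; since a finite intersection of dense Zariski opens is dense, $\Omega_A$ is as required. Item~\ref{it:top-multiplicity} is immediate: the topological degree of a generic $f\in\C^A$ equals the number of solutions in $(\C^*)^2$ of a generic system with Newton polytopes $A_1^0,A_2^0$ (one must pass to $A_i^0$ to account for solutions that the monomial structure forces toward the axes), which by Bernstein's theorem is $\MV(A^0)$.

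Next, for \ref{it:crit_genus} and \ref{it:crit_axes}: Remark~\ref{rem:gaps} already splits $\det\Jac_z f = z_1^{m_v} z_2^{m_h} Q$ with $Q(0,0)\neq 0$, so $\cCf\setminus\Csf$ consists exactly of the line $\{z_1=0\}$ with multiplicity $m_v$ and $\{z_2=0\}$ with multiplicity $m_h$; the topological-multiplicity statement is read off from the local ramification of $f$ along these lines, and $f(\cCf\setminus\Csf)\cap\TT=\emptyset$ because a monomial component of the critical locus maps into the toric boundary. The Newton polytope $\Sigma$ of $\Csf$ (equivalently of $Q$) is determined combinatorially from $A$ by the genericity of the coefficients of $f$; smoothness of $\Csf$ in $\C^2\setminus\{\bm 0\}$ and irreducibility are Theorem~\ref{thm:critical-smooth}. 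For the geometric genus I would use Theorem~\ref{thm:discr-infty}, which says the closure of $\Csf$ in $X_\Sigma$ is smooth and transverse to every orbit; then by Khovanskii's genus formula a curve that is nondegenerate with respect to a convenient polytope $\Sigma$ and smooth in the torus has geometric genus equal to the number of interior lattice points $\hcir\Sigma$. The Milnor number at the origin is computed by Kouchnirenko's theorem: since $\Csf$ is Newton-nondegenerate at $\bm 0$ (again part of Theorem~\ref{thm:critical-smooth}) and convenient, $\mu_0(\Csf)=\cN(\Csf)$, the Newton number of its polytope.

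For \ref{it:discr_cusps}: $\Dsf=f(\Csf)$ is irreducible as the image of an irreducible curve, and its topological multiplicity is one since $\Csf$ has topological multiplicity one and maps generically one-to-one onto $\Dsf$; its Newton polytope $\Delta$ is again combinatorially determined. The geometric genus of $\Dsf$ equals that of $\Csf$, namely $\hcir\Sigma$, because a generic $f\in\C^A$ restricts to a birational—hence normalization-preserving—map $\Csf\to\Dsf$ (this is where genericity rules out the map folding the critical curve). Theorem~\ref{thm:critical-smooth} guarantees that every singular point of $\Dsf$ in $\C^2\setminus\{\bm 0\}$ is a simple node or an ordinary cusp, so by Remark~\ref{rem:cusp-node_Milnor} each contributes $\delta$-invariant one; Theorem~\ref{thm:discr-infty} ensures there are no further singularities hidden on the toric boundary of $X_\Delta$. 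Then the genus drop formula for a plane curve with polytope $\Delta$—arithmetic genus $\hcir\Delta$ minus the sum of $\delta$-invariants equals geometric genus $\hcir\Sigma$—forces the number of mild singularities to be exactly $\hcir\Sigma-\hcir\Delta$. Finally, \ref{it:non-properness} is Theorem~\ref{thm:non-properness-long} essentially verbatim: it provides the decomposition $\Ssf=\cR\cup\cR'$ with Newton polytopes $\Gamma,\Gamma'$, shows the only singularities of $\Ssf$ off the origin are the simple nodes, counts $\#\cR\cap\cR'=\MV(\Gamma,\Gamma')$ by Bernstein, computes $\mu_0$ of each piece and of the union by Kouchnirenko (using additivity of the Newton number under the Minkowski sum decomposition coming from the two-faces description of Theorem~\ref{thm:non-prop-main}), and counts $\#\Sing(\cR\setminus\{\bm 0\})=\hcir\Gamma$ (resp.\ $\hcir\Gamma'$) via the same genus-drop argument applied to a rational curve—here $\delta$-invariants saturate the arithmetic genus since $\cR,\cR'$ are rational.

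The main obstacle I anticipate is not any single formula but the bookkeeping that ties the three separately-proved theorems to a single generic $f$ over a single polytope pair, and in particular verifying that the combinatorially defined $\Sigma,\Delta,\Gamma,\Gamma'$ really are the Newton polytopes of the relevant curves for \emph{every} conical $A$ rather than just generically in $A$—this is where the conical hypothesis is used (via Theorem~\ref{thm:critical-smooth}) and where degenerations of $A$ could in principle make a polytope drop dimension or a curve acquire an unexpected component along an axis. Controlling the interaction between the origin (excluded everywhere) and the toric boundary (where Theorems~\ref{thm:discr-infty} and~\ref{thm:non-properness-long} do the work) is the delicate point that makes the clean identities $\hcir\Sigma-\hcir\Delta$ and $\cN(\Gamma\oplus\Gamma')$ come out exactly.
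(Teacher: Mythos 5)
Your proposal is correct and essentially matches the paper's Section~5 proof: you assemble the claims from Theorems~\ref{thm:critical-smooth}, \ref{thm:discr-infty}, and \ref{thm:non-properness-long}, then compute via Bernstein (Theorem~\ref{thm:BKK}), Kouchnirenko (Theorem~\ref{thm:Kouchnirenko}), and the genus-drop formula (Theorem~\ref{thm:Khovanskii_genus2}) on the Newton non-degenerate curves $\Csf$, $\Dsf$, $\Ssf$, exactly as the paper does in defining $\Omega_A=U_c\cap U_s\cap U_d\cap U_n$. One small observation: your own genus-drop computation in \ref{it:discr_cusps} correctly produces $\hcir\Delta-\hcir\Sigma$ for the number of mild singularities of $\Dsf$, and you then echo the opposite sign appearing in the theorem's statement; the written sign in the statement is a typo (compare Theorem~\ref{thm:Khovanskii_genus2} and $\Psi_7$ in Definition~\ref{def:polyhrdral-type}), not a gap in your argument.
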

Theorem~\ref{thm:discriminant}  may fail for some non-conical pairs $A$. For example, for any $a,b,c,d\in\C^*$, the set $\Csf$ of the below map will have two components instead of one:
\[
f:(x,~y)\mapsto (a\cdot xy +b\cdot x^3y^3,~c\cdot xy + d\cdot x). 
\] 

\begin{notation}\label{not:extra}
Let $\Pi,\Pi',\Pi''\subset\R^2_{\geq 0}$ be convenient polytopes. We define $\delta(\Pi)\in\{0,1\}$ to be
 \[
 \delta(\Pi)=0\Longleftrightarrow \cN(\Pi) = 0.
 \] For any operators $\cO,\cO'\in\{\hcir,\cN,\delta\}$, and operations $\circ,\circ'\in\{\cdot,+,-\}$ over $\R$, we use the abbreviations
 \begin{align*}
  \cO~(\Pi\circ\Pi'\circ \Pi'')&:=\cO(\Pi)~\circ~\cO(\Pi')~\circ~\cO(\Pi''),\\
 (\cO\circ \cO')~\Pi&:=\cO(\Pi)~\circ~\cO'(\Pi),\\
 (\cO\circ \cO')~(\Pi ~\circ'~\Pi')&:=(\cO\circ \cO')~\Pi~\circ'~(\cO\circ \cO')~\Pi'.
 \end{align*}
\end{notation}

\begin{definition}\label{def:polyhrdral-type}
Define the map
\[
\Psi:\ccCgeq\longrightarrow\N^{12},
\] taking any conical pair $A:=(A_1,A_2)\in\ccCgeq$ to the following values
\begin{align*}
\Psi_1& :=\MV(A^0) && &
\Psi_5& :=m_c\cdot m_h + m_c\cdot m_v+m_h\cdot m_v && &
\Psi_9& :=(\hcir + \cN)~(\Gamma + \Gamma') \\
\Psi_2& :=\hcir\Sigma && &
\Psi_6& :=m_c\cdot m_h\cdot m_v && &
\Psi_{10}&:=(\hcir + \cN)~(\Gamma\cdot\Gamma') \\
\Psi_3& :=\cN(\Sigma) && & 
\Phi_{7}& :=\hcir(\Delta - \Sigma) + \delta(\Delta) && &
\Psi_{11}& :=(\hcir + \delta)~(\Gamma + \Gamma')\\ 
 \Psi_4 &:= m_c+m_h+m_v && &
\Psi_8& :=\MV(\Gamma,\Gamma') + \cN(\Gamma\oplus\Gamma'-\Gamma-\Gamma')&& &
 \Psi_{12}& :=(\hcir + \delta)~(\Gamma \cdot\Gamma'),
\end{align*} where $m_c\in\{0,1\}$ satisfies $m_c=0\Longleftrightarrow \dim\Sigma <1$. For any generic $f\in\C^A$, the image $\Psi(A)\in\N^{12}$ is called the \emph{polyhedral type of $f$}.
\end{definition}

\begin{example}\label{ex:main}
If $A\in\ccCgeq$ is the conical pair illustrated in Figure~\ref{fig:pair-example}, then, we have $\MV(A^0)=20$, $m_h = 3$ and $m_v=0$. Let $\C^A\ni f:=(f_1,f_2):\C^{2}_{x,y}\to\C^2_{a,b}$ be given by 
\begin{align*}
f_1&=  ~2~x^2y^2 - ~y^2 +~ x^2y^6 -~ 3~x^4y^4 +~ x^3y^5,\\
f_2&= ~x^2y^2 - ~5~xy^2 + ~7~x^3y^6 -~11~x^5y^5 + ~21~x^4y^5 +~x^2y^4.
\end{align*} Its critical locus $\cCf$ is the union $\{y=0\}\cup\Csf$, and $\Sigma$ (represented in Figure~\ref{fig:JJJAD}) can be computed by hand. The discriminant $\cDf$ can be obtained via elimination methods, and thus the vertices of $\Delta$ are
\[
\begin{blockarray}{cccccc}
\begin{block}{(cccccc)}
  0 & 0 & 4 & 15 & 30 & 30 \\
  4 & 32 & 0 & 20 & 0 & 6\\
\end{block}
\end{blockarray}.
 \] This yields $\hcir\Sigma = 19$, $\hcir\Delta = 544$, $m_c=1$, $\cN(\Sigma) = 0$, and $\cN(\Delta) = 9$. The non-properness set $\cS_f$ is given by the union of two horizontal lines $\cR:=\{(b-1)\}\cup\{(b+11)\}$, together with $\cR'$, given by
{\small 
\begin{align*}
P:= &~14641~a^5 + ~4356~a^3b -~ 10890~a^2b^2 + ~243~b^4 + ~2178~a^3\\
\ & - ~11616~a^2b + 14682~ab^2 - ~324~b^3 - ~941~a^2 +~ 3764~ab - ~3764~b^2.
\end{align*}
} The computations were done using forward Theorem~\ref{thm:non-prop-main}. Then, we get $\Gamma$ is a vertical line segment, and $\Gamma'$ is presented in Figure~\ref{fig:JJJAD}. We can thus compute $\MV(\Gamma,\Gamma') = 10$, $\cN(\Gamma)=\hcir(\Gamma)=0$, $\cN(\Gamma')=\cN(\Gamma\oplus\Gamma')=1$, $\hcir(\Gamma') = 5$. Altogether, this gives the following polyhedral type
\[
\Psi(A) =  (20,~19,~0,~4,~3,~0,~526,~10,~6,~0,~6,~0).
\] 
\end{example}

\begin{figure}[htb]
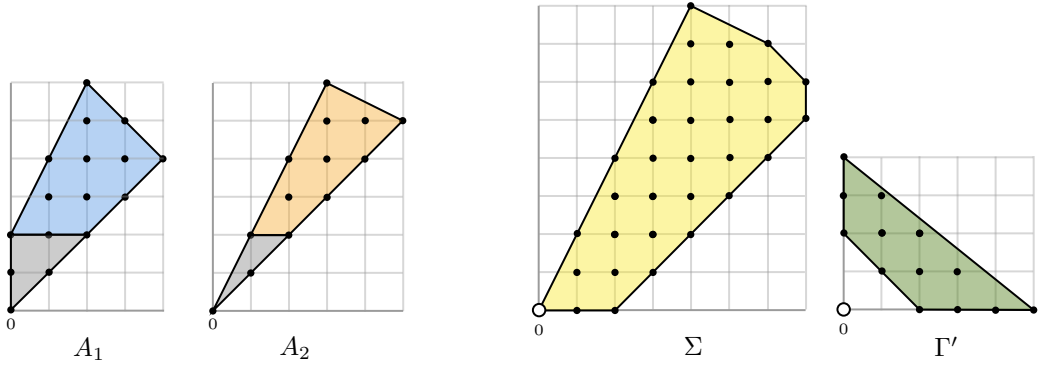


\tikzset{every picture/.style={line width=0.75pt}} 



\caption{Left: The pairs $A$ (blue and orange) and $A^0$ (grey included). Right: The polytopes $\Sigma$ and $\Gamma'$ corresponding to Example~\ref{ex:main}.}\label{fig:JJJAD}
\end{figure}
\begin{theorem}\label{thm:main2}
Let $\Upsilon:\cMCt\to\N^I$ be the map in Corollary~\ref{cor:main}. Then,
exists a map $F:\N^I\to\N^{12}$, such that for any $A\in\ccCgeq$, and any generic $f\in\C^A$, the following equality holds
\[
F\circ \Upsilon(f) = \Psi(A).
\] In particular, the polyhedral type of $f$ is a topological invariant.
\end{theorem}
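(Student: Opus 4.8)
The plan is to construct $F$ by hand, coordinate by coordinate, reading each entry of $\Psi(A)$ off a fixed list of topological invariants already stored in $\Upsilon(f)$. By Corollary~\ref{cor:main} and the recipe of Example~\ref{ex:map-upsilon}, the vector $\Upsilon(f)$ records, among its coordinates: the topological degree of $f$; for each of $\cCf$, $\cDf$, $\cS_f$, the multiset of topological multiplicities of its irreducible components and the multiset of their geometric genera; for each irreducible component, the multiset of Milnor numbers of its singular points; and, for each unordered pair of components of $\cS_f$, the multiset of intersection multiplicities at their common points. All of these are topological invariants. Hence it suffices to exhibit $\Psi_1(A),\dots,\Psi_{12}(A)$ as prescribed integer-valued functions of this list by invoking Theorem~\ref{thm:discriminant}, and then let $F\colon\N^I\to\N^{12}$ be the resulting bookkeeping map; by construction $F$ does not depend on $A$, the identity $F\circ\Upsilon(f)=\Psi(A)$ is immediate from Theorem~\ref{thm:discriminant}, and the final clause follows since $F\circ\Upsilon$ is then a topological invariant.

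\textbf{Coordinates $\Psi_1$–$\Psi_7$.} By Theorem~\ref{thm:discriminant}\ref{it:top-multiplicity}, $\Psi_1(A)$ is the topological degree. By \ref{it:crit_genus}, either $\cCf$ has a unique component that is not a line, namely $\Csf$, in which case $\Psi_2=\hcir\Sigma$ is its geometric genus and $\Psi_3=\cN(\Sigma)=\mu_0(\Csf)$ is the Milnor number of its single possible singular point, or every component of $\cCf$ is a line, in which case $\hcir\Sigma=\cN(\Sigma)=0$ by Lemma~\ref{lem:Milnor=0}; either way $F$ outputs the right value. For $\Psi_4,\Psi_5,\Psi_6$, note they are the elementary symmetric functions $e_1,e_2,e_3$ of $(m_c,m_h,m_v)$, and by \ref{it:crit_axes} the multiset of topological multiplicities of the components of $\cCf$ is precisely the multiset of nonzero values among $1$ (if $m_c=1$), $m_h$ and $m_v$; applying $e_1,e_2,e_3$ to this multiset reproduces $\Psi_4,\Psi_5,\Psi_6$. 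This is exactly why those three entries were symmetrized: it frees $F$ from having to tell the coordinate lines apart from one another or from $\Csf$. Finally, combining \ref{it:discr_cusps} with the genus (adjunction) formula for $\Dsf$ on the toric surface $X_\Delta$ — legitimate by the transversality statement Theorem~\ref{thm:discr-infty} — the identities $\#\{\text{mild singularities of }\Dsf\setminus\{\bm{0}\}\}=\hcir\Delta-\hcir\Sigma$ and $\delta(\Delta)=\mathbf 1_{\{\mu_0(\Dsf)>0\}}$ collapse to $\Psi_7=\hcir(\Delta-\Sigma)+\delta(\Delta)=\#\Sing(\Dsf)$, the total number of singular points of $\Dsf$; here "mild'' (node or cusp) is link‑theoretic, hence readable from $\Upsilon(f)$.

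\textbf{Coordinates $\Psi_8$–$\Psi_{12}$.} These come from the non-properness set and form the delicate part, because neither the point $\bm{0}$ nor the decomposition $\Ssf=\cR\cup\cR'$ of Theorem~\ref{thm:discriminant}\ref{it:non-properness} is a priori visible to $\varphi,\psi$. Two observations drive the argument. First, all five coordinates are, by the abbreviations of Notation~\ref{not:extra}, symmetric under $\Gamma\leftrightarrow\Gamma'$, so only the unordered pair of profiles matters. Second, each summand is intrinsic: $\hcir\Gamma=\#\Sing(\cR\setminus\{\bm{0}\})$ and likewise for $\Gamma'$; $\cN(\Gamma)=\mu_0(\cR)$, $\delta(\Gamma)=\mathbf 1_{\{\mu_0(\cR)>0\}}$, whence $\hcir\Gamma+\delta(\Gamma)=\#\Sing(\cR)$; $\MV(\Gamma,\Gamma')=\#\cR\cap\cR'$; and $\cN(\Gamma\oplus\Gamma')=\mu_0(\Ssf)$. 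Since every singular point of $\Ssf$ off $\bm{0}$ is a simple node, $\bm{0}$ is either smooth on a given piece or is its unique non-node singular point, so $\mu_0(\cR)$, $\mu_0(\cR')$, $\mu_0(\Ssf)$ are recovered as the excess Milnor data at the single ``bad'' point of each curve — an origin-free description. Packaging these into $\Psi_8=\#\cR\cap\cR'+\bigl(\mu_0(\Ssf)-\mu_0(\cR)-\mu_0(\cR')\bigr)$, $\Psi_{11}=\#\Sing(\cR)+\#\Sing(\cR')$, and the analogous expressions for $\Psi_9,\Psi_{10},\Psi_{12}$ then gives the desired functions.

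\textbf{Main obstacle.} The point I expect to require the most care is recovering, from $\Upsilon(f)$ alone, the partition $\Irr(\Ssf)=\Irr(\cR)\sqcup\Irr(\cR')$ \emph{up to the relabelling} $\cR\leftrightarrow\cR'$: the displayed formulas above are partition-dependent, and a node of $\Ssf$ at which two distinct components cross ``looks the same'' regardless of which piece each belongs to. I would resolve this by showing that for generic $f\in\C^A$ the irreducible components of $\cR$ and of $\cR'$ are matched to the sub-polytopes of $\Gamma$ and $\Gamma'$ by their Newton-polytope genus and singularity profiles (and by the mutual intersection pattern recorded in the per-pair data of $\Upsilon$), so that the grouping — hence $\#\cR\cap\cR'$, $\mu_0(\cR)$, $\mu_0(\cR')$ and $\#\Sing(\cR),\#\Sing(\cR')$ — is determined; the symmetry of $\Psi_8,\dots,\Psi_{12}$ then absorbs the residual $\cR\leftrightarrow\cR'$ ambiguity. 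Once this is in place, $F$ is defined and $F\circ\Upsilon(f)=\Psi(A)$ holds by construction, which also yields the ``in particular'' assertion.
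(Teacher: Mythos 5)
Your proof takes the same coordinate-by-coordinate route as the paper's: read each $\Psi_i(A)$ off the invariant data of Proposition~\ref{prp:separator} via the identities of Theorem~\ref{thm:discriminant}, symmetrizing wherever a homeomorphism might permute irreducible components. For $\Psi_1,\dots,\Psi_7$ your reasoning coincides with the paper's. Where you go further is in making explicit, for $\Psi_8,\dots,\Psi_{12}$, a step the paper's proof leaves silent: applying Proposition~\ref{prp:separator}~\ref{it:jacobian-multiplicity} to $\Sing(\cR)$, $\Sing(\cR')$ and $\cR\cap\cR'$ only yields invariants if a topological equivalence $\psi$ carries the unordered pair of subcurves $\{\cR_f,\cR'_f\}$ to $\{\cR_g,\cR'_g\}$, i.e.\ if the decomposition $\Ssf=\cR\cup\cR'$ is recoverable from $\Upsilon(f)$ up to swap. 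The paper simply asserts the values are invariants ``up to an index permutation $1\leftrightarrow 2$'' without arguing this, and since each of $\cR,\cR'$ may be reducible (a non-origin long dicritical edge produces a disjoint union of coordinate-parallel lines), you are right that this is a real point.

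Your proposed resolution, however, stays at the level of a sketch, and the first ingredient you list will not carry it: matching components by ``Newton-polytope genus and singularity profiles'' cannot separate the smooth rational lines making up a non-origin $\cR$ from one another, nor from smooth rational components of $\cR'$. What actually pins down the partition is the second ingredient in your parenthetical, the intersection graph on $\Irr(\Ssf)$: components within the same $\cR$ are pairwise disjoint, whereas when $\cR$ and $\cR'$ are both nonempty unions of lines every component of one meets every component of the other, so the graph is complete bipartite with bipartition $\{\Irr(\cR),\Irr(\cR')\}$; and when one of $\cR$, $\cR'$ is a single irreducible curve the partition is trivially recoverable. This per-pair intersection information is exactly what the coordinates of $\Upsilon$ described in Example~\ref{ex:map-upsilon} record, so promoting it to the centrepiece of the argument closes the gap, after which your construction of $F$ and the paper's proof agree.
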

\begin{proof}
Let $f\in\C^A$ be a generic map. In what follows, for each $i=1,\ldots,12$, we show that $\Psi_i(A)$ is a topological invariant of $f$. For $i=1,2,3$, the result follows immediately from Theorem~\ref{thm:discriminant}~\ref{it:top-multiplicity} and \ref{it:crit_genus}.

Theorem~\ref{thm:discriminant}~\ref{it:crit_genus} and~\ref{it:crit_axes}  show that $\cCf$ has at most three components whose topological multiplicities form a triple $(m_c,m_h,m_v)$. Indeed, the topological multiplicity of $\Csf$ is one if $\Csf$ is non-empty, and $\Sigma$ is either a point or empty otherwise. As homeomorphisms $\CtC$ may permute irreducible components, we consider triples $(\Psi_4,\Psi_5,\Psi_6)$ instead. The curve $\cDf$ also has at most three components, among of which $\Dsf$ is the only one that can be singular. Thanks to Lemma~\ref{lem:Milnor=0} and Theorem~\ref{thm:discriminant}~\ref{it:discr_cusps}, the value $\Psi_7$ counts the total number of singular points of $\cDf$ in $\C^2$. This is a topological invariant according to Proposition~\ref{prp:separator}~\ref{it:jacobian-multiplicity}. 

The remaining values $\Psi_i$ ($i=8,\ldots,12$) encode the singularities of $\Ssf$ in $\C^2$. The latter are divided into three subsets $\Sing(\cR)$, $\Sing(\cR')$, and $\cR\cap \cR'$. Furthermore, thanks to Theorem~\ref{thm:discriminant}~\ref{it:non-properness}, we have the following identities
\begin{align*}\label{eq:identities2}
\#\Sing (\cR) & = (\hcir +\delta)~\Gamma,  &
\#\Sing (\cR') & = (\hcir +\delta)~\Gamma', 
\\
\sum\mu_p(\cR) & = (\hcir +\cN)~\Gamma &
\sum\mu_p(\cR') & = (\hcir +\cN)~\Gamma'
\end{align*} where the sum runs over all singular points of the respective curves $\cR$ and $\cR'$. Proposition~\ref{prp:separator}~\ref{it:jacobian-multiplicity} shows that the values above are topological invariants of $f$ up to an index permutation $1\leftrightarrow 2$. To quotient the latter, we instead use the values $\Psi_9,\Psi_{10},\Psi_{11},\Psi_{12}$. Similarly for $\Ssf$, the sum $\sum \mu_p(\Ssf)$ is also a topological invariant of $f$ and is equal to
\[
\hcir\Gamma+\hcir\Gamma'+\MV(\Gamma,\Gamma') +\mu_0(\Ssf)
\] according to Theorem~\ref{thm:discriminant}~\ref{it:non-properness}. Note that $\mu_0(\Ssf) = \cN(\Gamma\oplus\Gamma')$. Then, since $(\hcir+ \cN)~\Gamma$ and $(\hcir+ \cN)~\Gamma'$ are accounted for in $\Psi_9$ and $\Psi_{10}$, the value $\Psi_8$ is a topological invariant on $f$.
\end{proof}

\begin{remark}\label{rem:cumbersome}
In order to compute the full map $\Upsilon$, Theorem~\ref{thm:main2} is missing the value $\mu_0(\cDf)$, the exact number of ordinary cusps of $\cDf$, and the number of irreducible components of $\cD_f$ and $\cS_f$, together with their topological multiplicities. We have omitted to compute these values as the statements would become significantly more cumbersome. Additionally, Proposition~\ref{prp:separator} easily extends to other identities involving the fundamental group $\pi_1(\C^2\setminus X)$, for $X\in\{\cCf,\cDf,\cS_f\}$. Thus, so would the map $\Upsilon$. Computing those invariants for conical pairs is the topic of future work.
\end{remark}

\section{Affine singularities of the discriminant}\label{sec:affine-sing}  
This section is devoted to describing the singularities of the discriminant corresponding to generic maps. The methods are adapted from~\cite{golubitsky2012stable} and~\cite{FJR19}. Let $A\in\ccCgeq$ be a conical pair, and consider the space $\C^{A^0}$ of maps $f\in\C^A$ after adding arbitrary constant terms to each polynomial. For any $f\in\C^{A^0}$, and any $a\in\C^2$, we set $\partial f(a)$ and $\partial^2 f(a)$ to denote the first two tuples of partial derivatives at $a$:
\[
\left.\left(\frac{\partial f_i}{\partial z_j}\right|_{z=a} \right)_{1\leq i,j\leq 2}\text{ and }\left.\left(\frac{\partial^2 f_k}{\partial z_i\partial z_j}\right|_{z=a} \right)_{1\leq k\leq 2,~1\leq i\leq j\leq 2}.
\]
Altogether, points $(a,~f(a),~\partial f(a),~\partial^2 f(a))$ form the \emph{space of $2$-jets} $\cJ^2(\TT,~\C^2)=:\cJ^2$ containing the image of the algebraic \emph{Thom-Boardman map of $A$} (c.f.~\cite{mather1973generic})
\begin{align*}
\Phi:\TT\times\C^{A^0}&\longrightarrow \TT\times \C^{12}\\
(z,~f)& \longmapsto (z,~f(z),~\partial f(z),~\partial^2 f(z)),
\end{align*} 

\begin{lemma}\label{lem:submersion}
The Thom-Boardman map of $A$ is a submersion.
\end{lemma}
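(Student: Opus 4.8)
The plan is to verify that the differential $d\Phi$ is surjective at every point $(z_0,f_0)\in\TT\times\C^{A^0}$. The first component of $\Phi$ is the projection onto $\TT$, and $\Phi$ is linear in the variable $f$; hence $d\Phi_{(z_0,f_0)}$ already surjects onto the $\TT$-summand of the target tangent space, and since the $f$-directions can be used to hit the $\C^{12}$-summand independently, full surjectivity reduces to showing that for each fixed $z_0\in\TT$ the linear evaluation map
\[
L_{z_0}\colon\ \C^{A^0}\longrightarrow\C^{12},\qquad g\longmapsto\bigl(g(z_0),\ \partial g(z_0),\ \partial^2 g(z_0)\bigr)
\]
is onto $\C^{12}$. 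Since $\C^{12}$ splits as $\C^{6}\times\C^{6}$ along the two coordinates of $g=(g_1,g_2)$, and the $2$-jet data attached to $g_i$ depends only on $g_i$, the map $L_{z_0}$ is the direct sum $L_{z_0}^{(1)}\oplus L_{z_0}^{(2)}$, where $L_{z_0}^{(i)}$ sends $g_i$ to $\bigl(g_i,\ \partial g_i/\partial z_1,\ \partial g_i/\partial z_2,\ \partial^2 g_i/\partial z_1^2,\ \partial^2 g_i/\partial z_1\partial z_2,\ \partial^2 g_i/\partial z_2^2\bigr)$ evaluated at $z_0$. So it suffices to prove each $L_{z_0}^{(i)}$ is onto $\C^{6}$.

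Next I would compute $L_{z_0}^{(i)}$ explicitly. Write $z_0=(a,b)$ with $a,b\in\C^{*}$ and $g_i=\sum_v c_v z^v$, where $v=(v_1,v_2)$ runs over the monomial exponents allowed in the $i$-th coordinate of $\C^{A^0}$; note that $\bm{0}$ and every element of a conical subset $S\subset A_i\cap(\N^2\setminus\{\bm{0}\})$ occur among these exponents. Differentiating the monomials $z^v$ and using $a,b\neq 0$, the six entries of $L_{z_0}^{(i)}(g_i)$ equal, after a fixed invertible diagonal rescaling by $(1,1/a,1/b,1/a^2,1/(ab),1/b^2)$, the six sums $\sum_v p(v)\,c_v z_0^{\,v}$ obtained as $p$ runs through the polynomials $1,\ v_1,\ v_2,\ v_1(v_1-1),\ v_1v_2,\ v_2(v_2-1)$. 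Since $z_0^{\,v}\neq 0$, the substitution $c_v\mapsto c_v z_0^{\,v}$ is a linear isomorphism on each coordinate, and $\{1,v_1,v_2,v_1(v_1-1),v_1v_2,v_2(v_2-1)\}$ spans the same space of degree-$\le 2$ polynomials as $\{1,v_1,v_2,v_1^2,v_1v_2,v_2^2\}$. Therefore $L_{z_0}^{(i)}$ is surjective if and only if the vectors $\nu(v):=(1,v_1,v_2,v_1^2,v_1v_2,v_2^2)\in\C^{6}$, for $v$ among the allowed exponents, span $\C^{6}$ — a condition that no longer depends on $z_0$, so proving it establishes the submersion property at all points simultaneously.

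The last step is to invoke the conical hypothesis (Definition~\ref{def:conical}). Consider the $6\times 6$ matrix whose rows are $\nu(v)$ for $v\in\{\bm{0}\}\cup S$: its row for $v=\bm{0}$ is $(1,0,0,0,0,0)$, so expanding the determinant along that row and deleting the first column leaves exactly the matrix $M(S)$ of Definition~\ref{def:conical}, up to a permutation of rows. Hence the determinant is $\pm\det M(S)\neq 0$, so $\{\nu(v):v\in\{\bm{0}\}\cup S\}$, and a fortiori the full family $\{\nu(v)\}$, spans $\C^{6}$; this gives surjectivity of each $L_{z_0}^{(i)}$, hence of $L_{z_0}$, hence of $d\Phi$. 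The argument is essentially linear algebra, and its only real content is recognising that the conical condition on $A$ is precisely the hypothesis that makes the order-$\le 2$ jet evaluation at a torus point surjective; the one point demanding a little care is confirming that the exponents $\{\bm{0}\}\cup S$ genuinely lie in the support set defining $\C^{A^0}$, which holds because a constant term is appended to each polynomial of $\C^{A}$ and $S\subset A_i$. Dropping the conical hypothesis can make this span strictly smaller than $\C^{6}$, which is consistent with the failure of Theorem~\ref{thm:discriminant} for some non-conical pairs.
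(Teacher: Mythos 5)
Your proposal is correct and follows essentially the same approach as the paper: fix a torus point, split the jet-evaluation map into the two coordinates $\C^{A_i^0}\to\C^6$, reduce surjectivity to a $6\times 6$ determinant whose rows come from $\{\bm{0}\}\cup S$ for a conical subset $S$, and identify that determinant (after row reduction) with $\pm\det M(S)\neq 0$. You are somewhat more explicit than the paper about the $z_0$-dependent diagonal rescaling and the substitution $c_v\mapsto c_v z_0^{\,v}$ that make the matrix independent of the evaluation point — a detail the paper elides by stating $M\in\N^{6\times 6}$ outright — but the underlying argument is the same.
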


\begin{proof}
We follow closely the proof of~\cite[Theorem 2.5]{FJR19}. Assume without loss of generality that $\#A_1 = \#A_2 = 5$. We will show that the map 
\begin{align*}
\Phi_a:=\Phi(a,\cdot):\C^{A^0}&\longrightarrow \C^{12}\\
f& \longmapsto (f(a),~\partial f(a),~\partial^2 f(a))
\end{align*} is a submersion for any $a\in\TT$. Since $\Phi_a$ is linear, it is enough to show that it is surjective. Let $i\in\{1,2\}$, and consider $\Phi_a^i:\C^{A_i^0}\to\C^6$, $h\mapsto (h(a),~\partial h(a),~\partial^2 h(a))$, taking one bivariate polynomial to its partial derivatives of order at most two and evaluating at $a$. Since $\Phi_a^1,\Phi_a^2$ is a splitting of $\Phi_a$, we only show that $\Phi_a^i$ is surjective. The latter is given by the matrix $M\in\N^{6\times 6}$, whose first row is the vector $(1,0,\ldots,0)$ and each of the remaining five rows is expressed as 
\[
(1,~r,~s,~r\cdot (s - 1),~r\cdot s,~s\cdot (s - 1)),
\] for the corresponding point $(r,s)\in A_i$. After some linear operations on $M$, we obtain the matrix with rows $(1,0,\ldots,0)$ and
\[
(1,~r,~s,~r^2,~r\cdot s,~s^2).
\] As $A_i$ is conical, Definition~\ref{def:conical} implies that $\det M\neq 0$, and thus $\Phi^i_a$ is surjective.
\end{proof} Denote the coordinates of $\cJ^2$ as 
\[
(z_1,z_2,u,v,u_1,u_2,v_1,v_2,u_{11},u_{12},u_{22},v_{11},v_{12},v_{22}),
\] where $(u,v)$ corresponds to $(f_1,f_2)$, and the indexes correspond  partial derivatives of the latter. The Thom-Boardman map is useful in describing the critical locus, the discriminant of maps $f\in\C^A$, and its singularities: If $\Sigma\subset \cJ^2$ denotes the hypersurface $\{u_1v_2-u_2v_1 = 0\}$, then $\Csf$ and $\Dsf$ can be expressed as the closures of $
 \pi_1 (\Lambda_f\cap\Sigma)$ and $\pi_2 (\Lambda_f\cap\Sigma)$, where
\[
\Lambda_f:=\Phi(\TT,f),
\] and $\pi_1$ and $\pi_2$ are the projections $\cJ^2\to\TT_{z}$ and $\cJ^2\to\C^2_{u,v}$ respectively. 
It is easy to check that $f$ has a cusp at $a$ if an only if $\cDf$ has an ordinary cusp at $f(a)$ in the sense of Remark~\ref{rem:cusp-node_Milnor}. Let $\Sigma^1$ 
denote the hypersurface in $\cJ^2$, whose equation is 
\[
(u_{11}v_{2}+u_{1} v_{12}-u_{12}v_1 - u_{2}v_{11} )\cdot u_{2} - ( u_{12}v_{2} + u_{1} v_{22} - u_{22}v_{1} - u_{2}v_{12})\cdot  u_{1} =  ~0
\]
The following result is well-known thanks to Whitney~\cite{whitney1955singularities} (see also~\cite[pp 1--2]{levine1965elimination} and~\cite[Remark 3.3]{FJR19}). 
\begin{lemma}\label{lem:Whitney-singularities}
For any $f\in\C^{A}$, if $\Phi(a,f)\in\Sigma^1\cap\Sigma$, and $\Lambda_f$ intersects transversally each of $\Sigma^1$ and $\Sigma$ at $\Phi(a,f)$, then $a\in\C^2$ is an ordinary cusp of $f$.
\end{lemma}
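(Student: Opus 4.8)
The plan is to exploit that the hypotheses and the conclusion are unchanged if $f$ is replaced by $\psi\circ f\circ\varphi$ for germs of biholomorphisms $\varphi$ of the source at $a$ and $\psi$ of the target at $f(a)$: such a substitution acts on $\cJ^2$ by a fibre--preserving diffeomorphism fixing $\Sigma=\{u_1v_2-u_2v_1=0\}$ and the Thom--Boardman set $\Sigma^1\cap\Sigma$, carrying $\Lambda_f$ to $\Lambda_{\psi\circ f\circ\varphi}$, hence preserving transversality, and it evidently preserves ``having an ordinary cusp at $a$''. So I would first normalise $f$. Since $\Lambda_f$ meets $\Sigma$ transversally at $\Phi(a,f)$ and $\Sigma$ is singular exactly where $df=0$, the point $\Phi(a,f)$ lies in the smooth locus of $\Sigma$, so $df(a)\ne 0$ and $\operatorname{rank}df(a)=1$. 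After a linear change of target coordinates assume $\nabla f_1(a)\ne 0$, after a linear change of source coordinates $\partial_{z_1}f_1(a)\ne 0$; then $(z_1,z_2)\mapsto(f_1(z),z_2)$ is a chart of the source in which $f_1\equiv z_1$, and subtracting a suitable multiple of $f_1$ from $f_2$ and translating $a$, $f(a)$ to the origin brings $f$ to the form
\[
f=(x,\;g(x,y)),\qquad g(0,0)=0,\qquad \nabla g(0,0)=0.
\]

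Next I would translate the jet conditions into conditions on $g$. In these coordinates $\det\Jac_z f=g_y$, so $\cCf=\{g_y=0\}$ and $\ker df$ along $\cCf$ is spanned by $\partial_y$. Substituting the $2$-jet of $f_1=x$ (i.e. $u_1=1$, $u_2=0$, all second derivatives of $u$ vanishing) into the equation of $\Sigma^1$ shows that the pull--back of the defining function of $\Sigma^1$ along $z\mapsto\Phi(z,f)$ is $-g_{yy}$, while that of $\Sigma$ is $g_y$. Hence $\Phi(0,f)\in\Sigma^1\cap\Sigma$ is equivalent to $g_y(0)=g_{yy}(0)=0$; transversality of $\Lambda_f$ to $\Sigma$ at $\Phi(0,f)$ means $d_0 g_y\ne 0$, i.e. $\cCf$ is smooth at $0$, which with $g_{yy}(0)=0$ forces $g_{xy}(0)\ne 0$ and makes $\cCf$ tangent to $\langle\partial_y\rangle$ at $0$; and the transversality hypothesis read as saying that $\Lambda_f$, $\Sigma$, $\Sigma^1$ are in general position at $\Phi(0,f)$ — equivalently that the pulled--back equations $g_y$ and $g_{yy}$ cut out a reduced point at $0$ — is equivalent to $d_0 g_y$ and $d_0 g_{yy}$ being linearly independent, i.e. to $g_{yyy}(0)\ne 0$; equivalently, $g_{yy}|_{\cCf}$ has a simple zero at $0$, so $\cCf$ has contact of order exactly two with the kernel direction there.

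With these translated conditions the conclusion follows from a short computation. Using $g_{xy}(0)\ne 0$, solve $g_y=0$ for $x=\gamma(y)$ with $\gamma(0)=0$; differentiating $g_y(\gamma(y),y)\equiv 0$ and using $g_{yy}(0)=0$, $g_{yyy}(0)\ne 0$ gives $\gamma'(0)=0$ and $\gamma''(0)=-g_{yyy}(0)/g_{xy}(0)\ne 0$, so $\gamma(y)=\lambda y^2+O(y^3)$ with $\lambda\ne 0$. The germ of $\cDf$ at the origin is parametrised by $y\mapsto(\gamma(y),\,g(\gamma(y),y))$; since $g_y$ vanishes on $\cCf$ one has $\tfrac{d}{dy}g(\gamma(y),y)=g_x(\gamma(y),y)\,\gamma'(y)$, and $g_x(\gamma(y),y)=g_{xy}(0)y+O(y^2)$, $\gamma'(y)=2\lambda y+O(y^2)$, so $g(\gamma(y),y)=\mu y^3+O(y^4)$ with $\mu=\tfrac23\lambda g_{xy}(0)\ne 0$. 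Thus near the origin $\cDf$ is a single branch with local parametrisation $y\mapsto(\lambda y^2+\cdots,\ \mu y^3+\cdots)$: it has multiplicity $2$, a unique tangent $L=\{b=0\}$, and $\mu_0(\cDf,L)=3$. By the definition following Definition~\ref{def:int-mult-Milnor} this is an ordinary cusp, whence, by the equivalence recalled just before the lemma, $a$ is an ordinary cusp of $f$. Alternatively one could push the normalisation all the way to $g=xy+y^3$ using finite determinacy of this singularity and read the cusp off the normal form.

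The step I expect to be the main obstacle is the faithful handling of the transversality hypothesis. It must be used in the form that $\Lambda_f$, $\Sigma$ and $\Sigma^1$ are \emph{mutually} transverse at $\Phi(a,f)$ — equivalently, that the pulled--back equations of $\Sigma$ and $\Sigma^1$ define a reduced zero--scheme at $a$ — since this is exactly what delivers ``contact order exactly two'' and hence the sharp local picture of $\cDf$; separate transversality of $\Lambda_f$ to each of the two hypersurfaces is strictly weaker and would permit higher--order degenerations. One should also record that only $\Sigma^1\cap\Sigma$ (the stratum $\Sigma^{1,1}$) and this transversality content are intrinsic under the coordinate changes used in the first step, which is all the argument requires.
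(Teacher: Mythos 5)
The paper does not prove this lemma; it is recorded as classical and attributed to Whitney~\cite{whitney1955singularities}, with pointers to~\cite{levine1965elimination} and~\cite[Remark~3.3]{FJR19}. Your argument supplies a correct, self\nobreakdash-contained proof along Whitney's own lines: normalize $f$ by source/target biholomorphisms to $(x,g(x,y))$ with $g(\bm{0})=0$, $\nabla g(\bm{0})=0$; observe that the defining equations of $\Sigma$ and $\Sigma^1$ pull back along $z\mapsto\Phi(z,f)$ to $g_y$ and $-g_{yy}$; extract $g_{xy}(\bm{0})\neq 0$ and $g_{yyy}(\bm{0})\neq 0$ from the transversality hypotheses; and then solve $g_y=0$ implicitly and push forward. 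I checked the computation, including $\gamma(y)=\lambda y^2+O(y^3)$ with $\lambda=-g_{yyy}(\bm{0})/(2g_{xy}(\bm{0}))$ and $g(\gamma(y),y)=\mu y^3+O(y^4)$ with $\mu=\tfrac23\lambda\, g_{xy}(\bm{0})=-g_{yyy}(\bm{0})/3$; both are nonzero, so the image germ is an ordinary cusp.

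Your closing caveat about the transversality hypothesis is the one substantive point, and you are right. Read literally, ``$\Lambda_f$ intersects transversally each of $\Sigma^1$ and $\Sigma$'' gives $g_{xy}(\bm{0})\neq 0$ together with $(g_{xyy}(\bm{0}),g_{yyy}(\bm{0}))\neq(0,0)$; this does not force $g_{yyy}(\bm{0})\neq 0$, and without it the conclusion fails. For instance $g=xy+xy^2$ satisfies both separate transversality conditions at the origin (the two pulled\nobreakdash-back differentials are $dx$ and $-2\,dx$), yet $\cCf$ is locally $\{x=0\}$ and is collapsed by $f$ to a point, so $\bm{0}$ is not a cusp. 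The argument, and the lemma, require the hypothesis in the form: $\Lambda_f$ is transverse to the codimension\nobreakdash-two stratum $\Sigma\cap\Sigma^1$ at $\Phi(a,f)$, equivalently $d_0 g_y$ and $d_0 g_{yy}$ are linearly independent, equivalently $g_{yyy}(\bm{0})\neq 0$. This is the standard Thom--Boardman condition in Whitney's theorem and is what the proof of Theorem~\ref{thm:critical-smooth} actually supplies via the general\nobreakdash-position argument, so your proof establishes the intended statement. Your remark that only $\Sigma$ and $\Sigma\cap\Sigma^1$, not the ambient hypersurface $\Sigma^1$ itself, are invariant under the jet action of biholomorphisms is also correct, and it is exactly what makes the opening normalization legitimate.
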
  Before we prove the main theorem of this section, we introduce the following notion.

\begin{definition}[\cite{golubitsky2012stable}, Definition 3.5,~pp 83]\label{def:general_position}
Let $V$ be a vector space and let $H_1,\ldots,H_r$ be subspaces
of $V$. Then $H_1,\ldots,H_r$ are said to be in general position if for every sequence of integers $i_1,\ldots,i_s$ with $1\leq i_1<\ldots < i_s\leq r$, we have 
\[
\cdim (H_{i_1}\cap\cdots\cap H_{i_s}) = \cdim H_{i_1}+\cdots+ \cdim H_{i_s}.
\] 
\end{definition}

\begin{remark}\label{rem:two_general_positions}
One can check that two subspaces $H_1,H_2\subset V$ are in general position iff $H_1 + H_2 = V$ (see also \cite[Definition 3.5,~pp 83]{golubitsky2012stable}).
\end{remark}

\begin{theorem}\label{thm:critical-smooth}
Let $A\in\ccCgeq$. Then, there exists a dense subset $U_c\subset\C^{A}$ in which each $f\in U_c$ satisfies the following assertions:
\begin{enumerate}[label=(\arabic*)]

\item\label{it:smoothness} the curve $\Csf$ is smooth in $\C^2\setminus\{\bm{0}\}$, has topological multiplicity one, and is irreducible,

\item\label{it:genus=genus} the geometric genus of $\Csf$ equals the geometric genus of $\Dsf$, and

\item\label{it:cusps-nodes} a singular point of $\Dsf$ in $\C^2\setminus\{\bm{0}\}$ is either a simple node or an ordinary cusp.

\end{enumerate}
\end{theorem}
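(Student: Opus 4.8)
The plan is to use the Thom-Boardman map $\Phi$ of $A$ together with the standard jet-transversality machinery (à la Thom–Mather), exactly in the spirit of~\cite[Sections 2--4]{FJR19}, but carefully tracking the role of the origin $\bm{0}$, where the conical hypothesis does not help. First I would set up the relevant strata in the $2$-jet space $\cJ^2$: the hypersurface $\Sigma = \{u_1v_2 - u_2v_1 = 0\}$ cutting out the critical set, its singular locus (where in addition the $1$-jet of the Jacobian vanishes, i.e.\ the ``$\Sigma^{1,1}$''-type stratum), and the cusp hypersurface $\Sigma^1$ from Lemma~\ref{lem:Whitney-singularities}. Since $\Phi$ is a submersion over $\TT$ by Lemma~\ref{lem:submersion}, the preimage under $\Phi(\cdot,f)$ of each of these (locally closed, algebraic) subsets is, for $f$ in a dense subset $U_c\subset\C^A$, transverse to it --- this is the algebraic Thom transversality theorem applied fibrewise to the submersion $\Phi\colon\TT\times\C^{A^0}\to\cJ^2$ and then restricting the constant terms to zero (the restriction $\C^A\hookrightarrow\C^{A^0}$ is harmless because adding constants only translates the target, and transversality of $\Lambda_f$ with the strata is preserved, as in~\cite{FJR19}). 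General position of the finite collection $\{\Sigma,\Sigma^1,\Sigma^{1,1}\}$ in the sense of Definition~\ref{def:general_position} then lets me intersect these transversality conditions simultaneously on a dense set.

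For item~\ref{it:smoothness}: away from $\bm{0}$, smoothness of $\Csf=\overline{\pi_1(\Lambda_f\cap\Sigma)}$ follows because $\Lambda_f$ meets $\Sigma$ transversally and $\pi_1$ restricts to an immersion on $\Lambda_f$ (as $\Lambda_f$ is a graph over $\TT_z$); one also needs that $\Lambda_f$ avoids the stratum where $\Sigma$ is itself singular, which again is a codimension count afforded by transversality --- on $\TT\setminus\{\bm{0}\}$ this is exactly the $\Sigma^{1,1}$-avoidance. Topological multiplicity one comes from the fact that at a generic point of $\Csf\setminus\{\bm{0}\}$ the map $f$ is a fold (Whitney's normal form $(x,y)\mapsto(x,y^2)$), so a small neighbourhood is a $2$-to-$1$ cover onto its image. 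Irreducibility of $\Csf$ is the point where the conical hypothesis is genuinely used: the equation of $\Csf$ is $z_1^{m_v}z_2^{m_h}$ times $Q$ with $Q(0,0)\neq 0$ (Remark~\ref{rem:gaps}), and I would argue that for generic coefficients the polynomial $Q$ is irreducible --- most cleanly by exhibiting one $f\in\C^A$ with $Q$ irreducible and invoking that irreducibility is a Zariski-open condition on $\C^A$; the conical property guarantees enough monomials of $\det\Jac_z f$ are present to realize such a witness (the counterexample displayed right after Theorem~\ref{thm:discriminant} shows this can fail when $A$ is not conical).

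For item~\ref{it:genus=genus}: $f$ restricts to a morphism $\Csf\to\Dsf$ that is generically injective (at a fold point $f$ is locally a homeomorphism onto its image in suitable coordinates), hence birational; birational morphisms of curves preserve the geometric genus, so the normalizations of $\Csf$ and $\Dsf$ have the same genus. Here again one uses that there are no worse singularities forcing the map to fail generic injectivity --- transversality rules out a positive-dimensional locus of multiple points. For item~\ref{it:cusps-nodes}: at a singular point $p=f(a)$ of $\Dsf$ with $a\neq\bm{0}$, either $a$ is a fold and $p$ is a self-crossing of two fold branches (a simple node), or $a\in\pi_1(\Lambda_f\cap\Sigma^1\cap\Sigma)$; in the latter case transversality of $\Lambda_f$ to both $\Sigma^1$ and $\Sigma$ at $\Phi(a,f)$ together with Lemma~\ref{lem:Whitney-singularities} gives that $a$ is an ordinary cusp of $f$, hence (Remark following Definition~\ref{def:int-mult-Milnor}) $p$ is an ordinary cusp of $\Dfsf$; and transversality forbids the next Thom-Boardman stratum, so no other local type occurs. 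Finiteness of $\Sing(\Dfsf\setminus\{\bm{0}\})$ follows because the relevant strata have the right codimension, making $\Lambda_f\cap\Sigma^1\cap\Sigma$ finite.

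The main obstacle, and the place I would spend the most care, is the behaviour \emph{at the origin}: the conical condition buys transversality of the Thom-Boardman map only over $\TT=(\C^*)^2$, not at $\bm{0}$, so all the statements have to be phrased as ``smooth/mild outside $\bm{0}$,'' and one must check that the closure operation $\overline{\pi_i(\cdots)}$ does not secretly introduce extra branches or singularities at points other than $\bm{0}$ --- for this I would use that $\Csf$, $\Dsf$ are \emph{convenient} (their Newton polytopes meet both axes), so the toric compactification arguments deferred to~\S\ref{sec:toric_sing} control what happens along the axes and at infinity, and the only uncontrolled point is $\bm{0}$ itself. A secondary subtlety is making the single genericity set $U_c$ work for all three conclusions at once; this is handled by taking the finite intersection of the (dense, constructible) sets coming from each transversality requirement, which is again dense because $\C^A$ is irreducible.
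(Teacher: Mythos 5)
Your overall strategy is the same as the paper's: establish transversality of the Thom--Boardman map to the relevant jet strata using the conical hypothesis (Lemma~\ref{lem:submersion}), then project down, restrict $\C^{A^0}\to\C^A$, and read off smoothness and the Whitney normal forms. So the skeleton is right, and the observation that the conical condition only controls behaviour over $\TT$ (hence the caveat ``outside $\bm{0}$'') is well placed.

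There is, however, a genuine gap at the heart of items~\ref{it:genus=genus} and~\ref{it:cusps-nodes}. You assert that two fold branches of $\Csf$ mapping to the same point $b$ produce a simple node of $\Dsf$, and that $f_{|\Csf}$ is generically injective, with the justification ``transversality rules out a positive-dimensional locus of multiple points.'' Local transversality of $\Lambda_f$ to $\Sigma$ (and even to $\Sigma^1$) does \emph{not} give this: those are conditions at single points, whereas injectivity and the node conclusion are conditions on \emph{pairs} (or tuples) of points in a fibre $f^{-1}(b)$. You also appeal to general position of the collection $\{\Sigma,\Sigma^1,\Sigma^{1,1}\}$ ``in the sense of Definition~\ref{def:general_position},'' but that definition concerns linear subspaces of a single tangent space, and the paper applies it to the tangent images $(d\Phi_f)_{a_i}(T_{a_i}\TT)$ for the distinct preimages $a_1,\dots,a_n\in f^{-1}(b)$ --- not to the strata in $\cJ^2$. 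What is actually needed is the statement that for generic $f$ the immersion $\Phi_f$ has \emph{normal crossings}, which the paper obtains from the stability of $\Phi$ via \cite[Theorem 3.11, pp.~85]{golubitsky2012stable} together with \cite[Lemma 3.6, pp.~83]{golubitsky2012stable}; an equivalent route is multi-jet transversality. This normal-crossings step is what yields at once (i) generic injectivity of $f_{|\Csf}$ (hence item~\ref{it:genus=genus}), (ii) $\#f^{-1}(b)\cap\Csf\leq 2$, and (iii) that a double fold point is a simple node. Without it your proof of items~\ref{it:genus=genus} and~\ref{it:cusps-nodes} is incomplete.

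Your treatment of irreducibility is a legitimate alternative to the paper's (rather terse) argument ``$\Sigma$ is irreducible and has multiplicity one.'' Reducibility of the polynomial with fixed Newton polytope $\NP(Q)$ is a Zariski-closed condition, so producing one $f\in\C^A$ with $Q=\det\Jac_z f / (z_1^{m_v} z_2^{m_h})$ irreducible would suffice on the open set where $\NP(\Csf)$ equals the generic polytope $\Sigma$ (Lemma~\ref{lem:Newton_polytopes-invar}). But you stop short of exhibiting such a witness and only gesture to the non-conical counterexample. If you pursue this route you need to actually construct the witness using the conical hypothesis; as written this is a secondary gap.
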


\begin{proof} 
Lemma~\ref{lem:submersion} shows that there exists an open dense subset $V\subset\C^{A^0}$ containing all elements $f\in V$ for which $\Lambda_f$ is transveral to $\Sigma$. Define $U_c:=\Proj(V)$ to be the image under the projection $\C^{A^0}\to\C^A$, $f\mapsto f-f(\bm{0})$. Then, for any $f\in U_c$, we have $\Csf$ has no singularities in $\TT$, has topological multiplicity one, and is irreducible. Indeed, since $\Sigma$ is irreducible and has multiplicity one (as a sheaf). Now, to see that $\Csf$ has no singularities at the coordinate axes outside $\{\bm{0}\}$, note that $\Sigma$ intersects transversally $\{u=0\}$ and $\{v=0\}$ outside the linear subspace $\{uv=0\}\subset\cJ^2$. This proves Item~\ref{it:smoothness}.

Concerning Items~\ref{it:genus=genus} and~\ref{it:cusps-nodes}, we we start with some observations. It is known~\cite[Proposition 2.1, pp. 78]{golubitsky2012stable} the map $\Phi$ is stable as a consequence of it being a submersion. We refer to stability in the classical sense introduced by Whitney (see e.g.~\cite[Definition 1.1, pp. 72]{golubitsky2012stable}). Then, the dense subset $V\subset \C^{A^0}$ can be chosen so that for any $f\in V$, the restricted map $\Phi_f$ is also stable, where 
\begin{align*}
\Phi_f:\TT&\longrightarrow Y\\
z& \longmapsto \Phi_f(z):=\Phi(z,f),
\end{align*} and $Y$ is the closure of $\Lambda_f$ in $\cJ$. Furthermore, recall that $\Lambda_f$ intersects transversely the set $\Sigma$. Then, a straightforward computation shows that $\Phi_f$ is an immersion. 

Now, let $b\in\Dsf\setminus\{\bm{0}\}$. Since $\bm{0}\not\in A_1\cup A_2$, the preimage $f^{-1}(b)$ does not contain the origin. Consider any finite collection of points $\{a_1,\ldots,a_n\}\subset f^{-1}(b)$. Since $\Phi_f$ is a stable immersion,~\cite[Theorem 3.11, pp. 85]{golubitsky2012stable} together with~\cite[Lemma 3.6, pp. 83]{golubitsky2012stable} show that the subspaces 
\[
(d\Phi_f)_{a_1}(T_{a_1}\TT),~\ldots,~(d\Phi_f)_{a_n}(T_{a_n}\TT)~\subset T~ Y
\] are in general position. Using again the transversality of $\Lambda_f\cap\Sigma$, we conclude that the lines $H_1,\ldots,H_n\subset T~\Phi_f(\Csf)$, given by 
\[
H_i:=\left.\left(d\Phi_f \right|_{\Csf}\right)(T_{a_i}\Csf)=T_{\Phi_f(a_i)}\Sigma\cap T_{\Phi_f(a_i)}Y,
\] are in general position in the tangent bundle $T~\Sigma\cap Y$. This shows that so are the lines
\[
(df)_{a_1}(T_{a_1}\Csf),~\ldots,~(df)_{a_n}(T_{a_n}\Csf)\subset T~\Dsf, 
\] in general position. Since $\dim T~\Dsf = 2$, and $\dim H_1 =\cdots= \dim H_n =1$, Remark~\ref{rem:two_general_positions} shows that $n\leq 2$. In fact, this yields the following three assertions (c.f.~\cite[Lemma 4.1]{FJR19})
\begin{itemize}
\item[(i)]\label{it:subit-inj} $f_{|\Csf}$ is injective outside a finite subset,

\item[(ii)]\label{it:subit-preimag-number} if $b\in\Dsf\setminus \{\bm{0}\}$ then $\# f^{-1}(b)\cap\Csf~\leq 2 $, and 

\item[(iii)]\label{it:subit-node} if $\# f^{-1}(b)\cap\Csf~= 2 $ then the curve $\Dsf$ has a simple node at $b$.
\end{itemize} Finally, Item~\ref{it:genus=genus} follows from Item~(i), and Item~\ref{it:cusps-nodes} follows from Items (ii), (iii), and Lemma~\ref{lem:Whitney-singularities}.
\end{proof}

\section{Boundary singularities of the discriminant}\label{sec:toric_sing}
In this section, we describe a compactification of the discriminant of a generic map $\CtC$ based on its Newton polytope. We then show in Theorem~\ref{thm:discr-infty} below that it does not introduce new singularities. This allows us to use classical theorems from toric geometry to describe the genus and singularities of the discriminant. The proof of forward Theorem~\ref{thm:discr-infty} requires some preparatory results and prerequisites in toric geometry. These will be presented in~\S\ref{sub:faces-polytopes} and~\S\ref{sub:toric} respectively. 

\subsection{Preliminaries on Newton polytopes of critical loci}\label{sub:faces-polytopes}  

The common zero locus in $\C^n$ of a tuple $P$ of polynomials $P_1,\ldots,P_r\in \C[z_1,\ldots,z_n]$ will be denoted by $\V(P):=\V(P_1,\ldots,P_r)$, and define $\Vs(P):=\V(P)\cap \TTn$. 
If $\Pi$ is the Newton polytope of a polynomial $P=\sum c_a~z^a\in\C[z_1,\ldots,z_n]$, and $\delta\prec\Pi$ is a face of $\Pi$, then we use $P_\delta$ to denote the polynomial
\[
\sum_{a\in \delta\cap\N^n}c_a~z^a.
\] We say that $P$ is \emph{non-degenerate at $\delta$} if 
\[
\Vs(P_{\delta},~\partial P_{\delta}/\partial z_1,\cdots,~\partial P_{\delta}/\partial z_n)=\emptyset, 
\] and \emph{Newton non-degenerate} if it is non-degenerate at each face of $\Pi$.
\begin{lemma}\label{lem:Newton_polytopes-invar}
For  any conical pair $A\in\ccCgeq$, there exists lattice polytopes $\sJ$, $\sDc$, and a dense subset $U_1\subset\C^A$ in which each $f\in U_1$ satisfies $\sJ =  \NP(\Csf)$ and $\sDc = \NP(\Dsf)$. 
\end{lemma}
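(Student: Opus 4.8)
The plan is to exhibit two polytopes $\sJ$ and $\sDc$ that depend only on the combinatorial data of $A$, and a dense (in fact Zariski open, hence dense) subset $U_1\subset\C^A$ on which the Newton polytopes of $\Csf$ and $\Dsf$ are exactly these. I would first observe that the polynomial $\det\Jac_z f$ has a fixed monomial support structure: by Remark~\ref{rem:gaps}, $\det\Jac_z f = z_1^{m_v}z_2^{m_h}\cdot Q_f$ with $Q_f(\bm 0)\neq 0$, and $\Csf=\V(Q_f)$. The coefficients of $\det\Jac_z f$ are polynomial functions of the coefficients of $f$ (entries of $\partial f$ multiplied out), so the Newton polytope $\NP(\det\Jac_z f)$ is contained in a fixed lattice polytope $\widehat\Sigma$ determined by $A_1,A_2$, and for $f$ in the complement of the union of coefficient-vanishing hypersurfaces of those vertex (and edge) coefficients, equality $\NP(\det\Jac_z f)=\widehat\Sigma$ holds; dividing out $z_1^{m_v}z_2^{m_h}$ gives $\NP(\Csf)=\sJ:=\widehat\Sigma-(m_v,m_h)$ on a Zariski open subset $U_1'\subset\C^A$. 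I would note this subset is nonempty, because one can write down an explicit $f$ (or invoke that the vertex coefficients of $\det\Jac_z f$ are nonzero polynomials in the $f$-coefficients, which needs a one-line check that each such coefficient is not identically zero — here the conical hypothesis again guarantees enough free monomials), and since every nonempty Zariski open of $\C^A$ is dense.

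Next, for the discriminant $\Dsf = f(\Csf)$, I would use elimination theory: $\Dsf$ is the closure of the image of $\Csf$ under $(z_1,z_2)\mapsto(f_1,f_2)$, so its defining polynomial is, up to scaling, the resultant eliminating $z_1,z_2$ from $Q_f(z)$, $f_1(z)-u$, $f_2(z)-v$ (or a suitable saturation removing the coordinate-axis components). The Newton polytope of such a resultant/elimination polynomial is again a semicontinuous function of the input coefficients: there is a fixed lattice polytope $\widehat\Delta$, computable from $A$, with $\NP(\Dsf)\subseteq\widehat\Delta$ for all $f\in\C^A$ and with equality on a Zariski open set $U_1''$. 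Intersecting, $U_1:=U_1'\cap U_1''$ works with $\sJ$ as above and $\sDc:=\widehat\Delta$. Alternatively, and perhaps more in keeping with the toric machinery the paper sets up in the next section, I would bound $\NP(\Dsf)$ via tropical/Newton-polytope arithmetic for images of curves (the Newton polytope of $f(\Csf)$ is governed by the Minkowski-weighted combination of the Newton polytopes of $f_1$, $f_2$, and $\sJ$), again yielding a fixed target polytope with generic equality.

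The main obstacle is the genericity/irreducibility bookkeeping for $\Dsf$: I must make sure the polytope $\sDc$ I assign is attained by a \emph{single} irreducible polynomial (so that it matches $\NP$ of the reduced curve $\Dsf$, not of a power or of a product with extraneous factors), and that passing from $\V(\det\Jac_zf)$ to $\Dsf$ via elimination does not silently pick up the coordinate axes or components of $\cCf\setminus\Csf$ — this is why one works with $\Csf$ and the appropriate saturation. I would handle this by invoking Theorem~\ref{thm:critical-smooth}, which already gives that on a dense subset $\Csf$ is irreducible with topological multiplicity one, so $f$ restricted to $\Csf$ is generically injective and $\Dsf$ is irreducible; then the elimination polynomial is (a power of) the defining polynomial of $\Dsf$, and on the dense subset where its leading/vertex coefficients do not vanish the power is one. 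Intersecting with the dense set $U_c$ from Theorem~\ref{thm:critical-smooth} and with $U_1'\cap U_1''$ gives the desired $U_1$. The upper semicontinuity statements ``$\NP\subseteq$ fixed polytope, with generic equality'' are the routine part: each vertex coefficient of the relevant resultant is a fixed polynomial in the coefficients of $f$, nonzero for at least one $f\in\C^A$ (again using that $A$ is conical so there are enough monomials), and its non-vanishing is a Zariski open condition.
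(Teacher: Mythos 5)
Your proposal follows essentially the same route as the paper: express the coefficients of $\det\Jac_z f$ (via the $\varphi_s$ from~\eqref{eq:phi-s}) and of the elimination polynomial for $\Dsf$ as polynomials in the coefficients of $f$, and take $U_1$ to be the Zariski-open (hence dense) locus where these do not vanish, so the Newton polytopes stabilize at their generic value. If anything, you are more careful than the paper's own terse argument on two points the paper glosses over: the paper sets $\Sigma := \NP(\det\Jac_z f)$ rather than $\NP(Q_f)$ (i.e.\ it omits the shift by $(m_v,m_h)$ that you correctly record), and it does not explicitly saturate the elimination ideal to separate $\Dsf$ from the axis components $f(\cCf\setminus\Csf)$; your invocation of Theorem~\ref{thm:critical-smooth} to guarantee $\Dsf$ is reduced and irreducible is an extra ingredient not in the paper's proof, and since that theorem is established earlier and independently, using it here creates no circularity.
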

\begin{proof}
We will define $U_1$ to be the intersection of Zariski opens $K,L\subset \C^A$, where each one represents maps $f$ satisfying one of the two above identities.

Concerning $\sJ$. Every $(f_1,f_2)\in\C^A$ is obtained from a pair of polynomials $F_1,F_2\in\Z[z_1,z_2,c_u:u\in A_1]\times\Z[z_1,z_2,d_v:v\in A_2]$ after substituting the corresponding values for the coefficients 
\[
\underline{c}:=\big(c_u\big)_{u\in A_1}\text{\quad and\quad }\underline{d}:=\big(d_v\big)_{v\in A_2}.
\] Then, we can express $\det\Jac_z F$ as the polynomial
$\sum\varphi_s~z^s$ 
with support $S\subset\N^2$, where $\varphi_s\in\Z[\underline{c},\underline{d}]$ are of the form 
\begin{equation}\label{eq:phi-s}
\sum_{\substack{u\in A_1, v\in A_2 \\ 
u+v = (s_1+1,s_2+1)}}c_ud_v\cdot\det(u,v)
\end{equation} Hence, the equality $\conv(S)=\NP(\det\Jac_z f)=:\Sigma$ holds if the coefficients of $f\in\C^A$ satisfy $\prod\varphi_s\neq 0$. All such choices of $(\underline{c},\underline{d})$ form a Zariski open $K$ containing $\C^A\setminus\V(\prod\varphi_s)$.

As for $\sDc$, we follow similar steps as above; the elimination ideal 
\[
\langle F_1-w_1,~F_2 - w_2,~\det\Jac_z F\rangle\cap \Z[w_1,~w_2,~\underline{c},\underline{d}],
\] is generated by $\mathsf{D}_F:=\sum\psi_t~w^t$, supported on a finite set $T$, where $\psi_t\in \Z[\underline{c},\underline{d}]$. The polynomial $\mathsf{D}_F$ specializes to the equation of $\cDf$ when plugging $(\underline{c},\underline{d})$ corresponding to $f$. Similarly, as above, we take the dense subset $L:=\C^A\setminus\V(\prod\psi_t)$, and $\Delta:=\conv(T)$.
\end{proof}

We will present in~\S\ref{sec:algorithms} a method for computing $\Delta$ in Lemma~\ref{lem:Newton_polytopes-invar} that uses only $A$.

\begin{lemma}\label{lem:C_Newton-non} 
There is a dense subset $U_2\subset\C^A$ in which each $f\in U_2$ is a polynomial map $\CtC$ with Newton non-degenerate critical locus $\Csf$.
\end{lemma}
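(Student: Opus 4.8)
The plan is to show that for each face $\delta \prec \Sigma$ of the (generically fixed) Newton polytope $\Sigma = \NP(\Csf)$ from Lemma~\ref{lem:Newton_polytopes-invar}, the condition of being non-degenerate at $\delta$ is a nonempty Zariski-open condition on $f \in \C^A$, and then intersect over the finitely many faces together with $U_1$. Write $J := \det\Jac_z F = \sum_s \varphi_s z^s$ as in the proof of Lemma~\ref{lem:Newton_polytopes-invar}, with $\varphi_s \in \Z[\underline{c},\underline{d}]$ given by~\eqref{eq:phi-s}. For $f \in U_1$ the truncation $(\det\Jac_z f)_\delta$ is exactly $\sum_{s \in \delta\cap\N^2}\varphi_s(\underline c,\underline d)\, z^s$, so non-degeneracy at $\delta$ is the condition that the system $(J_\delta, \partial J_\delta/\partial z_1, \partial J_\delta/\partial z_2)$ has no common zero in $\TT^2$. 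By the theory of sparse resultants (the $\delta$-resultant of three Laurent polynomials supported on $\delta$), this common solvability is a Zariski-closed condition on the coefficient vector $(\varphi_s)_{s \in \delta\cap\N^2}$, hence—pulling back along the polynomial map $(\underline c,\underline d) \mapsto (\varphi_s)_s$—a Zariski-closed condition on $f \in \C^A$. So it suffices to exhibit a single $f \in \C^A$ that is non-degenerate at $\delta$.

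To produce such an $f$ I would argue face by face. If $\delta$ is a vertex, then $J_\delta$ is a single monomial, which never vanishes on $\TT^2$, so non-degeneracy is automatic. If $\delta$ is an edge, then $J_\delta$ is, up to a monomial factor, a one-variable polynomial $g(t)$ in the lattice direction of $\delta$; its critical locus in $\TT^2$ is nonempty iff $g$ has a repeated root, so it is enough to choose the coefficients $\varphi_s$ along $\delta$ so that $g$ is separable—and by the Euler relation the partials $\partial J_\delta/\partial z_i$ cut out the same locus. Finally if $\delta = \Sigma$ itself, I would invoke Kouchnirenko's genericity statement: the set of Laurent polynomials with a fixed Newton polytope $\Sigma$ that are non-degenerate (in the sense that $h$ and its partials have no common zero in $\TT^2$) is Zariski-dense. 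Thus the remaining task is to check that the coefficients $\varphi_s$ can be made to realize such a generic configuration. This is where the conical hypothesis re-enters: Lemma~\ref{lem:submersion} shows that the coefficients $(\varphi_s)_s$ are, up to the fixed shape~\eqref{eq:phi-s}, free enough—more precisely, I would show that the image of $(\underline c,\underline d) \mapsto (\varphi_s)_s$ is not contained in any of the finitely many proper subvarieties carved out above, by a direct dimension count or by perturbing a nondegenerate example.

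The main obstacle is the last point: the map $(\underline c,\underline d) \mapsto (\varphi_s)_{s}$ is quadratic and highly non-surjective, so one cannot simply quote Kouchnirenko on the $\varphi_s$; one has to verify that the constraint image still meets the (dense) nondegenerate locus. I expect this to follow from the same surjectivity-of-jets input used in Lemma~\ref{lem:submersion} and from the smoothness of $\Csf$ in $\C^2\setminus\{\bm 0\}$ already established in Theorem~\ref{thm:critical-smooth}~\ref{it:smoothness}: interior non-degeneracy of $\Sigma$ is essentially the statement that $\Vs(\det\Jac_z f)$ is smooth, which for generic $f$ is implied by Theorem~\ref{thm:critical-smooth}, and the boundary faces are handled by the vertex/edge analysis above. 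Intersecting the resulting finitely many dense open sets with $U_1$ yields the desired $U_2$.
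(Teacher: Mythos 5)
Your proposal follows essentially the same route as the paper: restrict $\det\Jac_z f$ to each edge $\sigma$ of $\Sigma$, write the restriction as $z^u h(z^v)$ with $h$ univariate, note that degeneracy at $\sigma$ amounts to $h$ having a repeated root --- a hypersurface condition in the space $\C^B$ of univariate polynomials with the corresponding support (the paper packages it as the $A$-discriminant $\nabla_B$; your appeal to sparse resultants of three Laurent polynomials is equivalent but takes a small detour before specializing to the one-variable case) --- and then argue that the coefficient map $\Upsilon:\C^A\to\C^B$, $f\mapsto (\det\Jac_z f)_\sigma$, is dominant so its image is not contained in the bad hypersurface. The paper, like you, does not actually supply the dominance argument (``we leave it to the reader''), so the two accounts are at the same level of completeness there. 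One correction worth flagging: your suggestion that this dominance ``should follow from the same surjectivity-of-jets input used in Lemma~\ref{lem:submersion}'' points at the wrong tool --- Lemma~\ref{lem:submersion} controls the pointwise $2$-jet $(f(a),\partial f(a),\partial^2 f(a))$ at a fixed $a$, whereas what is needed here is freedom in the \emph{sparse coefficient vector} $(\varphi_s)_{s\in\sigma\cap\Z^2}$ of $\det\Jac_z F$, which must be read off directly from the bilinear expression~\eqref{eq:phi-s} and the structure of the corresponding face pair of $(A_1,A_2)$. Finally, your treatment of the top-dimensional face $\delta=\Sigma$ is consistent with the paper but outside the scope of Lemma~\ref{lem:C_Newton-non}: the paper establishes smoothness of $\Csf$ in $\TT$ separately in Theorem~\ref{thm:critical-smooth} and only handles edges (vertices being automatic) in this lemma.
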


\begin{proof}
 Let $\sigma$ be any edge of $\Sigma$. It is enough to construct a dense subset $U_\sigma\subset\C^A$ in which the set $\Vs(Q_{\sigma}, \partial Q_{\sigma}/\partial z_1,\partial  Q_{\sigma}/\partial z_2)$ is empty, where $Q:=\det(\Jac_z f)$. Then, we take $U_2$ to be the locus of $U_1$ from Lemma~\ref{lem:Newton_polytopes-invar}, intersected with all $U_\sigma$ corresponding to faces of $\Sigma$.

Let $v$ be an integer vector spanning $\sigma$. Then, the polynomial $Q_\sigma$ can be written as $z^u\cdot h(z^v)$ for some $u\in\N^2$, where $h$ is a univariate polynomial with a non-zero constant term. Furthermore, it is an easy computation to check that $a\in \Vs(Q_{\sigma}, \partial Q_{\sigma}/\partial z_1,\partial  Q_{\sigma}/\partial z_2)$ if and only if $a^v = c_0$ for some solution $c_0\in\C^*$ to $h = h' = 0 $. Now, consider the \emph{$A$-discriminant} $\nabla_B$ in the space $\C^{B}\cong\C^{\#\sigma \cap\N^2}$ of all univariate polynomials with the same support $B\subset \N$ as $P$. This is the closure of the set of all such polynomials having a double root in $\C^*$. On the one had, if non-empty, the $A$-discriminant $\nabla_B$ is an algebraic hypersurface in $\C^B$ (see e.g.,~\cite{GKZ08}). On the other hand, we leave it to the reader to check the map $\Upsilon:\C^A\longrightarrow\C^{\sigma}\cong\C^B$, $f\longmapsto (\det \Jac_z(f))_\sigma$ is dominant. Then, the set $U_\sigma:=\Upsilon^{-1}(\C^B\setminus\nabla_B)$ is a dense in $\C^A$. 
\end{proof}

\subsection{Preliminaries on toric varieties}\label{sub:toric}
We give here a brief introduction to toric varieties following closely~\cite{Ful93} and~\cite{cox2011toric}. 

\begin{definition}\label{def:toric_variety}
A \emph{toric variety} $X$ is a normal variety that contains a torus $\TTn$ as a dense open subset, together with an action $\TTn\times X\to X$ of $\TTn$ on $X$ extending the natural action of $\TTn$ on itself.
\end{definition}

For any lattice $N\cong\Z^n$ we write $N_\R:=N\otimes_\Z\R$. A \emph{rational cone} $\sigma\subset N_\R$ is given as a conical combination of finitely many vectors $a_1,\ldots,a_r\in N$: 
\[
\sigma:=\{\lambda_1a_1+\cdots+\lambda_ra_r~|~\lambda_1,\ldots,\lambda_r\in\R_{\geq 0}\}.
\] This is a convex set, which is called \emph{strongly convex} if $\sigma$ does not contain a positive-dimensional subspace of $N_\R$. 
A \emph{face} $\tau$ of $\sigma$, expressed as $\tau\prec\sigma$, is the intersection of $\sigma$ with a hyperplane $H$ passing through the origin in which $\sigma$ is entirely contained in one of the two halfspaces determined by $H$.

\begin{definition}\label{def:fans}
A \emph{fan} $\Gamma$ in $N$ is a collection of strongly convex rational cones in $N_\R$, where every face of a cone in $\Gamma$ is also a cone in $\gamma$, and the intersection of two cones in $\Gamma$ is a face of each. The last two properties make $\Gamma$ \emph{polyhedral}.
\end{definition}

We use $M:=\Hom(N,\Z)$ to denote the dual lattice with dual pairing denoted by $ \langle\cdot,\cdot\rangle$. If $\sigma$ is a cone in $N_\R$, the dual cone $\sigma^{\vee}$ is the set of vectors in $M_{\R}:=M\otimes_\Z\R$ that are nonnegative on $\sigma$. This determines a commutative semigroup
\[
S_\sigma = \sigma^{\vee}\cap M = \{u\in M~|~\langle u,~v\rangle\geq 0~\text{for all }v\in\sigma\}.
\] This semigroup is finitely generated, so its corresponding \emph{group
algebra} $\C[S_\sigma]$ is a finitely generated commutative $\C$-algebra. Such an algebra corresponds to an affine variety by setting
\[
U_\sigma := \Spec(\C[S_\sigma]).
\] If $\tau$ is a face of $\sigma$, then $S_\sigma$ is contained in $S_\tau$, so $\C[S_\sigma]$ is a subalgebra of $\C[S_\tau]$, which gives a map $U_\tau\to U_\sigma$. In fact, $U_\tau$ is a principal open subset of $U_\sigma$: if we choose $u\in S_\sigma$ so that $\tau = \sigma\cap u^{\perp}$, then $U_\tau = \{x\in U_\sigma~|~u(x)\neq 0 \}$.

\begin{theorem}[\cite{Ful93}]\label{thm:toric_1}
Let $\Gamma$ be a fan in a lattice $N$, and let $X_\Gamma$ be the algebraic variety obtained as a result of gluing all affine varieties $U_\sigma$, where $\sigma$ runs over all cones of $\Gamma$. Then, $X_\Gamma$ is a toric variety. 
\end{theorem}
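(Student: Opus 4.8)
The result is classical, so the plan is to carry out the standard gluing construction and then verify, one by one, the three requirements of Definition~\ref{def:toric_variety}. First I would form $X_\Gamma$ as a scheme by gluing the affine charts $U_\sigma=\Spec\C[S_\sigma]$ along the principal opens attached to common faces, and then check that $X_\Gamma$ is an integral, normal, separated, finite-type variety, that it contains $\TTn$ as a dense open subset, and that the torus action on $\TTn$ extends to $X_\Gamma$.

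For the gluing: given $\sigma,\tau\in\Gamma$, their intersection $\rho:=\sigma\cap\tau$ is a common face of both by Definition~\ref{def:fans}, so, as recalled just before the statement, $U_\rho$ is a principal open subset of $U_\sigma$ and, symmetrically, of $U_\tau$; composing the two embeddings gives an isomorphism $g_{\sigma\tau}$ between an open of $U_\sigma$ and an open of $U_\tau$. On a triple overlap involving cones $\sigma,\tau,\omega$, the relevant opens all coincide with $U_{\sigma\cap\tau\cap\omega}$ (which is again a cone of $\Gamma$, being iteratively a face of faces), and the three comparison isomorphisms are induced by the mutually compatible inclusions $\C[S_\sigma],\C[S_\tau],\C[S_\omega]\hookrightarrow\C[S_{\sigma\cap\tau\cap\omega}]$, so the cocycle condition holds and the $U_\sigma$ glue to a scheme $X_\Gamma$ of finite type over $\C$, covered by the $U_\sigma$.

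Next I would verify that $X_\Gamma$ is a variety. Each $\C[S_\sigma]$ is a domain, being a subring of $\C[M]$, and since $S_\sigma$ generates $M$ as a group (exactly the strong convexity of $\sigma$) we get $\dim U_\sigma=n$; as every chart contains the common open $U_{\{\bm{0}\}}=\Spec\C[M]=\TTn$, it follows that $X_\Gamma$ is irreducible. Normality is a local question and holds because $S_\sigma=\sigma^\vee\cap M$ is a saturated semigroup, so $\C[S_\sigma]$ is integrally closed in its fraction field. The only substantive point is \emph{separatedness}: by the gluing criterion for separatedness it suffices that for all $\sigma,\tau$ the diagonal $U_{\sigma\cap\tau}\hookrightarrow U_\sigma\times_\C U_\tau$ be a closed immersion, that is, that the ring map $\C[S_\sigma]\otimes_\C\C[S_\tau]\to\C[S_{\sigma\cap\tau}]$, $z^m\otimes z^{m'}\mapsto z^{m+m'}$, be surjective, that is, that $S_{\sigma\cap\tau}=S_\sigma+S_\tau$ inside $M$. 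The inclusion $S_\sigma+S_\tau\subseteq S_{\sigma\cap\tau}$ is immediate; the reverse is the separation lemma: choosing $u\in S_\sigma$ with $\sigma\cap\tau=\sigma\cap u^\perp=\tau\cap(-u)^\perp$ --- possible precisely because $\sigma\cap\tau$ is a face of both $\sigma$ and $\tau$ --- one has $-u\in S_\tau$ and $S_{\sigma\cap\tau}=S_\sigma+\Z_{\geq0}(-u)\subseteq S_\sigma+S_\tau$. I expect this separation lemma, together with the observation that the fan axiom ``the intersection of two cones is a face of each'' gets used essentially only here, to be the crux of the argument; everything else is bookkeeping.

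Finally, since $\{\bm{0}\}$ is a face of every cone of $\Gamma$, the chart $U_{\{\bm{0}\}}\cong\TTn$ is contained in every $U_\sigma$, and it is dense because the inclusion $\C[S_\sigma]\hookrightarrow\C[M]$ is a localization, hence dominant; thus $\TTn$ is a dense open of $X_\Gamma$. The comultiplication $\C[S_\sigma]\to\C[M]\otimes_\C\C[S_\sigma]$, $z^m\mapsto z^m\otimes z^m$, defines an action $\TTn\times U_\sigma\to U_\sigma$ on each chart; since these are given by the same formula on monomials, they are compatible with the isomorphisms $g_{\sigma\tau}$ and therefore patch to an action $\TTn\times X_\Gamma\to X_\Gamma$ whose restriction to $U_{\{\bm{0}\}}=\TTn$ is the translation action. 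This exhibits $X_\Gamma$ as a toric variety, completing the proof.
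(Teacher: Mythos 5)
The paper does not give its own proof of this theorem; it cites it from Fulton \cite{Ful93}. Your argument is exactly the standard one from that reference: glue the affine charts $U_\sigma$ along the common-face opens, verify integrality and normality from the semigroup structure, reduce separatedness to the separation lemma $S_{\sigma\cap\tau}=S_\sigma+S_\tau$, and patch the comultiplication-defined torus actions; this is correct and follows the approach the paper implicitly takes by deferring to \cite{Ful93}.
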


\subsubsection{Orbits of a toric variety} The toric variety from Theorem~\ref{thm:toric_1} will be called the \emph{toric variety correponding to $\Gamma$}. To the lattice $N\cong\Z^n$, we associate the torus $T_N\cong(\C^*)^n$. For each $\tau\prec\sigma$ we define $N_\tau$ to be the sublattice of $N$ generated (as a group) by $\tau\cap N$, and 
\[
N(\tau) :=  N/N_\tau,\quad M(\tau):= \tau^{\perp}\cap M,
\] the quotient lattice and its dual respectively. Define $O_\tau$ to be the torus corresponding to these lattices:
\[
  T_{N(\tau)} = \Hom(M(\tau),\C^*) = \Spec(\C[M(\tau)]) =N(\tau)\otimes_{\Z}(\C^*).
\] This is a torus of dimension $n-k$, where $k = \dim(\tau)$, on which $T_N$ acts transitively via the projection $T_N\to T_{N(\tau)}$. There is a convenient description of the torus orbits in terms of the corresponding fan. 

\begin{theorem}[\cite{Ful93},~Propostion 3.1]\label{thm:toric_2}
There are the following relations among orbits $O_\tau$, orbit closures $V(\tau)$, and the affine open sets $U_\sigma$:
\begin{equation}
U_\sigma  = \bigsqcup_{\tau\prec\sigma} O_\tau,\quad V(\tau)  = \bigsqcup_{\gamma\succ\tau} O_\gamma,\quad \text{and\quad}O_\tau  = V(\tau)\setminus \bigcup_{\gamma\underset{\neq}{\succ}\tau} V(\gamma).
\end{equation}
\end{theorem}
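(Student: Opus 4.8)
The plan is to work locally on the affine charts $U_\sigma=\Spec(\C[S_\sigma])$ and to transfer everything to the combinatorics of the semigroup $S_\sigma=\sigma^\vee\cap M$. First I would identify the closed points of $U_\sigma$ with semigroup homomorphisms $y\colon S_\sigma\to(\C,\cdot)$ into the multiplicative monoid, so that $T_N=\Hom(M,\C^*)$ acts by $(\chi\cdot y)(u)=\chi(u)\,y(u)$. For each face $\tau\prec\sigma$ the set $\sigma^\vee\cap\tau^\perp\cap M$ is a \emph{face} of the semigroup $S_\sigma$, i.e.\ a subsemigroup whose complement in $S_\sigma$ is an ideal, so the rule $u\mapsto 1$ for $u\in\tau^\perp$ and $u\mapsto 0$ otherwise is a well-defined homomorphism and defines a point $x_\tau\in U_\sigma$. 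I set $O_\tau:=T_N\cdot x_\tau$; its stabilizer in $T_N$ is the subtorus $T_{N_\tau}$, so $O_\tau\cong T_N/T_{N_\tau}=T_{N(\tau)}$, a torus of dimension $n-\dim\tau$.

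For the first identity I would take an arbitrary $y\in U_\sigma$ and consider its support $F_y:=\{u\in S_\sigma:y(u)\neq 0\}$. Multiplicativity of $y$ forces $F_y$ to be a face of the semigroup $S_\sigma$, and the standard order-reversing bijection between faces of $\sigma$ and faces of $\sigma^\vee$ (hence of $S_\sigma$) produces a unique $\tau\prec\sigma$ with $F_y=\sigma^\vee\cap\tau^\perp\cap M$. The restriction $y|_{F_y}$ is a homomorphism on the submonoid $F_y$ of $M$, so it factors through the group $F_y-F_y$ and then extends, using divisibility of $\C^*$, to a character $\chi\in\Hom(M,\C^*)$; a direct check gives $\chi\cdot x_\tau=y$, whence $y\in O_\tau$. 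Since distinct faces $\tau$ have distinct support faces, this yields the disjoint decomposition $U_\sigma=\bigsqcup_{\tau\prec\sigma}O_\tau$.

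Next I would verify that $O_\tau$ does not depend on the ambient cone: under the open immersion $U_\tau\hookrightarrow U_\sigma$ attached to a face $\tau\prec\sigma$, the distinguished points and their orbits are compatible, so the $O_\tau$ glue to a $T_N$-stable decomposition of $X_\Gamma$ indexed by $\Gamma$, with $\dim O_\tau=n-\dim\tau$. For the second identity, $V(\tau)=\overline{O_\tau}$ is computed chart by chart: $\overline{O_\tau}$ meets $U_\sigma$ exactly when $\tau\prec\sigma$ -- for instance because $x_\tau$ is the limit as $t\to 0$ of the one-parameter subgroup of $T_N$ attached to a lattice vector in the relative interior of $\tau$, and this limit lies in $U_\sigma$ iff $\tau\subset\sigma$ -- and in that case $\overline{O_\tau}\cap U_\sigma$ consists of those $y$ with $F_y\subseteq\sigma^\vee\cap\tau^\perp\cap M$, which by the support analysis above is $\bigsqcup_{\tau\prec\gamma\prec\sigma}O_\gamma$. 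Gluing over all cones having $\tau$ as a face gives $V(\tau)=\bigsqcup_{\gamma\succ\tau}O_\gamma$. (Alternatively, one may invoke that $V(\tau)$ with its reduced structure is the toric variety of the star fan of $\tau$ in $N(\tau)$ and apply the first identity there.) The third identity is then formal: any cone of $\Gamma$ having as a face some $\gamma$ that properly contains $\tau$ also has $\tau$ as a face, so $\bigcup_{\gamma\,\underset{\neq}{\succ}\,\tau}V(\gamma)=\bigsqcup_{\gamma\,\underset{\neq}{\succ}\,\tau}O_\gamma$, and subtracting this from $V(\tau)=\bigsqcup_{\gamma\succ\tau}O_\gamma$ leaves exactly $O_\tau$.

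The main obstacle is the combinatorial input underpinning everything: that the faces of the affine semigroup $S_\sigma=\sigma^\vee\cap M$ are precisely the sets $\sigma^\vee\cap\tau^\perp\cap M$ for $\tau\prec\sigma$, and that the support $F_y$ of any semigroup homomorphism is such a face; one must also be careful in the gluing step to confirm that $\overline{O_\tau}$ avoids $U_\sigma$ whenever $\tau$ is not a face of $\sigma$. Once these are in place, the rest is bookkeeping with characters of the quotient lattices $M(\tau)$.
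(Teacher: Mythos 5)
The paper does not supply its own proof of this statement; it is cited verbatim from Fulton's \emph{Introduction to toric varieties} (Proposition in Section 3.1), so there is no in-paper argument to compare against. Your proposal correctly reconstructs the standard proof of the orbit--cone correspondence as it appears in Fulton (and, in more detail, in Cox--Little--Schenck, \emph{Toric Varieties}, \S 3.2): points of $U_\sigma$ as semigroup homomorphisms, the distinguished points $x_\tau$, the support-face analysis giving $U_\sigma=\bigsqcup_{\tau\prec\sigma}O_\tau$, the limit/one-parameter-subgroup computation of closures, and the formal derivation of the third identity. The two technical hinges you flag --- that faces of the affine semigroup $S_\sigma$ are exactly the sets $\sigma^\vee\cap\tau^\perp\cap M$, and that $\overline{O_\tau}\cap U_\sigma=\emptyset$ unless $\tau\prec\sigma$ --- are indeed the only places where real work happens, and your sketches of both (multiplicativity forces the support to be a face; the limit of $\lambda^v(t)\cdot x_\tau$ as $t\to 0$ exists in $U_\sigma$ iff $v\in\sigma$) are correct. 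This is exactly the route Fulton takes, so the proposal is correct and essentially identical to the cited source.
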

We note that the map $\tau \mapsto V(\tau)$ defines an inclusion reversing bijection between cones $\tau \in \Sigma$ and torus invariant subvarieties of $X_\Sigma.$
\subsubsection{Toric morphisms}\label{sss:toric_morph} Let $N_1$, $N_2$ be two lattices with $\Sigma_1$ a fan in $N_{1}$ and $\Sigma_2$ a fan in $N_{2}$. A $\Z$-linear map $\overline{\phi}:N_1\to N_2$ is \emph{compatible} with the fans $\Sigma_1$ and $\Sigma_2$ if for every cone $\sigma_1\in\Sigma_1$, there exists a cone $\sigma_2\in\Sigma_2$ such that $\overline{\phi}_\R(\sigma_1)\subset\sigma_2$. 

It is known (see e.g.,~\cite[Theorem 3.3.4]{cox2011toric}) that such linear maps between lattices induce \emph{toric morphisms} between the corresponding toric varieties and vice-versa. That is, a morphism $\varphi:X_{\Sigma_1}\to X_{\Sigma_2}$ mapping the torus $T_{N_1}\subset X_{\Sigma_1}$ into $T_{N_2}\subset X_{\Sigma_2}$ and such that $\varphi_{|T_{N_1}}$ is a group homomorphism.  

\begin{theorem}[\cite{cox2011toric},~Lemma 3.3.21]\label{thm:toric_morphisms}
Let $\varphi:X_\Sigma \to X_{\Sigma'}$ be the toric morphism coming from a map $\overline{\phi}:N\to N'$ that is compatible with $\Sigma$ and $\Sigma'$. Given $\sigma\in\Sigma$, let $\sigma'\in\Sigma'$  be the minimal cone of $\Sigma'$ containing $\overline{\phi}_\R(\sigma)$. Then, the induced map $\phi_{|V(\sigma)}:V(\sigma)\to V(\sigma')$ is a toric morphism.
\end{theorem}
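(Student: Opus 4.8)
The plan is to realise $\varphi_{|V(\sigma)}$ as the toric morphism attached to an explicit $\Z$-linear map of lattices, exploiting that the orbit closures $V(\sigma)$ and $V(\sigma')$ are themselves toric varieties. Recall that $V(\sigma)$ is canonically the toric variety $X_{\mathrm{Star}(\sigma)}$ of the \emph{star fan} $\mathrm{Star}(\sigma)$ in $N(\sigma)=N/N_\sigma$, whose cones are the images $\bar\gamma$ of the cones $\gamma\in\Sigma$ with $\sigma\preceq\gamma$ under the projection $N_\R\to N(\sigma)_\R$ (see \cite{cox2011toric}); likewise $V(\sigma')=X_{\mathrm{Star}(\sigma')}$ for a fan $\mathrm{Star}(\sigma')$ in $N'(\sigma')=N'/N'_{\sigma'}$. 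By the equivalence between fan-compatible lattice maps and toric morphisms recalled in \S\ref{sss:toric_morph}, it then suffices to (i)~produce a $\Z$-linear map $\overline{\phi}(\sigma)\colon N(\sigma)\to N'(\sigma')$ compatible with $\mathrm{Star}(\sigma)$ and $\mathrm{Star}(\sigma')$, and (ii)~check that the induced morphism $X_{\mathrm{Star}(\sigma)}\to X_{\mathrm{Star}(\sigma')}$ coincides with the restriction of $\varphi$.

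For step (i), I would first build $\overline{\phi}(\sigma)$: since $\overline{\phi}_\R(\sigma)\subseteq\sigma'$ and $\overline{\phi}$ is $\Z$-linear, $\overline{\phi}$ sends $\sigma\cap N$ into $\sigma'\cap N'$, hence $\overline{\phi}(N_\sigma)\subseteq N'_{\sigma'}$, so $\overline{\phi}$ descends to the quotients. The one structural fact I would isolate is that $\sigma'$ is a \emph{face} of every cone of $\Sigma'$ containing $\overline{\phi}_\R(\sigma)$: if $\tau'\in\Sigma'$ also contains $\overline{\phi}_\R(\sigma)$, then $\sigma'\cap\tau'$ is a common face of $\sigma'$ and $\tau'$ still containing $\overline{\phi}_\R(\sigma)$, so minimality of $\sigma'$ forces $\sigma'\cap\tau'=\sigma'$, i.e.\ $\sigma'\preceq\tau'$ (this also shows the minimal cone is well defined). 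Now, given a cone $\bar\tau\in\mathrm{Star}(\sigma)$, i.e.\ $\sigma\preceq\tau\in\Sigma$, compatibility of $\overline{\phi}$ yields $\tau'\in\Sigma'$ with $\overline{\phi}_\R(\tau)\subseteq\tau'$; since $\overline{\phi}_\R(\sigma)\subseteq\overline{\phi}_\R(\tau)\subseteq\tau'$, the fact above gives $\sigma'\preceq\tau'$, so $\bar{\tau'}\in\mathrm{Star}(\sigma')$ and $\overline{\phi}(\sigma)_\R(\bar\tau)\subseteq\bar{\tau'}$. Hence $\overline{\phi}(\sigma)$ is compatible with the two star fans and defines a toric morphism $V(\sigma)\to V(\sigma')$.

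For step (ii), I would work on the dense tori. The orbit $O_\sigma\subset V(\sigma)$ is the torus $T_{N(\sigma)}$, and the claim is that $\varphi(O_\sigma)\subseteq O_{\sigma'}$ with the induced map $O_\sigma\to O_{\sigma'}$ equal to $\overline{\phi}(\sigma)\otimes\mathrm{id}_{\C^*}$. Using that $O_\sigma$ is swept out by the distinguished limits $\lim_{t\to0}\lambda_v(t)\cdot x$ for $v$ in the relative interior of $\sigma$ and $x\in T_N$, together with the equivariance of $\varphi$ and $\varphi(\lambda_v(t))=\lambda_{\overline{\phi}(v)}(t)$, this reduces to showing $\overline{\phi}_\R(v)$ lies in the relative interior of $\sigma'$. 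That in turn follows from minimality of $\sigma'$: were $\overline{\phi}_\R(v)\in\partial\sigma'$, it would lie on some proper face $\rho'=\sigma'\cap m^\perp$ with $m\in M'$, $\langle m,\sigma'\rangle\geq0$; then $w\mapsto\langle m,\overline{\phi}_\R(w)\rangle$ would be a linear functional on $\sigma$, nonnegative there and vanishing at the interior point $v$, hence identically zero on $\sigma$, so $\overline{\phi}_\R(\sigma)\subseteq\rho'$, contradicting the minimality of $\sigma'$. Passing to closures then gives $\varphi(V(\sigma))\subseteq V(\sigma')$, and since $\varphi_{|V(\sigma)}$ and the toric morphism from step (i) are morphisms $V(\sigma)\to V(\sigma')\subseteq X_{\Sigma'}$ agreeing on the dense torus $O_\sigma$ with $X_{\Sigma'}$ separated, they coincide.

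The main obstacle is the combinatorial part of steps (i)--(ii): pinning down the target cone $\sigma'$ and verifying that it behaves like a face relative to every cone of $\Sigma'$ meeting $\overline{\phi}_\R(\sigma)$, and---the genuinely delicate point---checking that $\overline{\phi}_\R$ maps the relative interior of $\sigma$ into the relative interior of $\sigma'$, which is exactly what selects the correct target orbit $O_{\sigma'}$. Once these are in place, identifying the glued morphism with $\varphi$ amounts to routine compatibility checks over the affine charts $U_\tau$.
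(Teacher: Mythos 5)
The paper states this result as a citation to Cox--Little--Schenck (Lemma~3.3.21) and does not supply its own proof, so there is no in-paper argument to compare against. Your reconstruction is correct and matches the standard textbook approach: identifying $V(\sigma)$ and $V(\sigma')$ with the toric varieties of the star fans, descending $\overline{\phi}$ to the quotient lattices, and checking agreement with $\varphi$ on the dense orbit torus. The two points that carry the real content are handled properly — namely, that minimality of $\sigma'$ makes it a face of every cone of $\Sigma'$ containing $\overline{\phi}_\R(\sigma)$ (so that the descended map is compatible with the star fans), and that $\overline{\phi}_\R$ carries the relative interior of $\sigma$ into the relative interior of $\sigma'$ (so that $\varphi(O_\sigma)\subseteq O_{\sigma'}$ rather than into a smaller orbit). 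The closing separatedness argument for identifying the glued morphism with $\varphi|_{V(\sigma)}$ is also sound, since $V(\sigma)$ is integral and $V(\sigma')$ is separated.
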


\subsubsection{Toric varieties from lattice polytopes}\label{sss:toric-lattice} Given a torus $\TTn$, a set $A= \{\alpha_1,\ldots ,\alpha_s\}\subset \Z^n$ gives rise to group homomorphisms $\chi_i:\TTn\to\C^*$, $z:=(z_1,\ldots,z_n)\mapsto z^{\alpha_i}$, ($i=1,\ldots,s$) called \emph{characters}. Consider the map $\Phi_A:\TTn\longrightarrow (\C^*)^s$, given by
\begin{equation}\label{eq:torus-map}
z:=(z_1,\ldots,z_n)\longmapsto (\chi_1(z),\ldots,\chi_s(z))
\end{equation} 
If $\varsigma:(\C^*)^n\longrightarrow\P^{s-1}$ is the map
\[
(y_1,\ldots,y_s)\longmapsto [y_1/y_s:\cdots:y_{s-1}/y_s:1],
\] then the closure of the image of $\TTn$ under $\varsigma\circ \Phi_A$ is a toric variety in $\P^{s-1}$. We will denote it by $X_A$ or $X_{\conv(A)}$.

Let $\Delta\subset\R^n$ be a full-dimensional lattice polytope, and let $\gamma\prec\Delta$. We denote $\sigma_\gamma\subset \R^n$ to be the \emph{normal cone} formed by all supporting vectors of the face $\gamma$ (see~\S\ref{sss:not-prelim}). The fan $\Sigma_\Delta$, formed by all normal cones to faces of $\Delta$ is called the \emph{normal fan} of $\Delta$:
\[
\Sigma_\Delta:=\bigcup_{\gamma\prec\Delta}\sigma_\gamma
\]

%
%
%
%
%

\begin{remark}\label{rem:faces-cones}
The toric varieties $X_{\Delta}$ and $X_{\Sigma_\Delta}$ need not be equal. It is however true that $X_{\Sigma_\Delta}$ is the normalization of $X_{\Delta}$.
Since the normalization map $X_{\Sigma_\Delta} \longrightarrow X_{\Delta}$ is torus equivariant, Theorem \ref{thm:toric_2} is also true for $X_{\Delta}$.
In particular, the orbits of $X_\Delta$ are in inclusion preserving bijection to the faces of $\Delta$.
\end{remark}

We end this part with the following classical correspondence.

\begin{theorem}[\cite{Ful93}, Section 1.4]\label{thm:toric_product}
Let $\Delta_1,\Delta_2\subset\R^n$ be two full-dimensional lattice polytopes. Then, it holds that
\[
X_{\Delta_1\times\Delta_2}\cong X_{\Delta_1}\times X_{\Delta_2}.
\]
\end{theorem}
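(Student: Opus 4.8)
The plan is to realize both sides of the asserted isomorphism as one and the same subvariety of a projective space, namely the image of a Segre embedding. Fix the convention that for a full-dimensional lattice polytope $\Delta\subset\R^m$ one takes $X_\Delta:=X_A$ with $A:=\Delta\cap\Z^m$, as in \S\ref{sss:toric-lattice}. Writing $\Delta_i\subset\R^{n_i}$ and $A_i:=\Delta_i\cap\Z^{n_i}=\{\alpha^{(i)}_1,\dots,\alpha^{(i)}_{s_i}\}$ for $i=1,2$, the first step is the purely combinatorial identity
\[
(\Delta_1\times\Delta_2)\cap\Z^{n_1+n_2}=A_1\times A_2:=\bigl\{(\alpha^{(1)}_i,\alpha^{(2)}_j)\ :\ 1\le i\le s_1,\ 1\le j\le s_2\bigr\},
\]
which holds because $(x,y)\in\Delta_1\times\Delta_2$ if and only if $x\in\Delta_1$ and $y\in\Delta_2$. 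Hence $X_{\Delta_1\times\Delta_2}=X_{A_1\times A_2}\subset\P^{s_1s_2-1}$, and it remains to identify this with $X_{\Delta_1}\times X_{\Delta_2}$.

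Next I would order $A_1\times A_2$ lexicographically, so that its last element is $(\alpha^{(1)}_{s_1},\alpha^{(2)}_{s_2})$, and observe that the character map~\eqref{eq:torus-map} attached to $A_1\times A_2$ factors as
\[
\Phi_{A_1\times A_2}=\mathrm{Seg}\circ\bigl(\Phi_{A_1}\times\Phi_{A_2}\bigr)\colon(\C^*)^{n_1}\times(\C^*)^{n_2}\longrightarrow(\C^*)^{s_1s_2},
\]
where $\mathrm{Seg}\colon(\C^*)^{s_1}\times(\C^*)^{s_2}\to(\C^*)^{s_1s_2}$, $\bigl((y_i),(y'_j)\bigr)\mapsto(y_iy'_j)_{i,j}$, since the $(i,j)$-entry of $\Phi_{A_1\times A_2}(z,w)$ is $z^{\alpha^{(1)}_i}w^{\alpha^{(2)}_j}$. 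With the chosen ordering, the normalizing monomial of the product configuration is $z^{\alpha^{(1)}_{s_1}}w^{\alpha^{(2)}_{s_2}}$, the product of the two individual ones; a direct check then gives $\varsigma\circ\Phi_{A_1\times A_2}=\mathrm{S}\circ\bigl((\varsigma\circ\Phi_{A_1})\times(\varsigma\circ\Phi_{A_2})\bigr)$, where $\mathrm{S}\colon\P^{s_1-1}\times\P^{s_2-1}\hookrightarrow\P^{s_1s_2-1}$ is the Segre embedding and each $\varsigma$ is taken with the appropriate number of arguments.

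Finally I would invoke the classical fact that the Segre embedding $\mathrm{S}$ is a closed immersion. For any closed immersion $\iota$ and any subset $Z$ of its source one has $\overline{\iota(Z)}=\iota(\overline Z)$, since $\iota$ is injective and maps closed sets to closed sets. Taking $Z$ to be the image of $(\C^*)^{n_1}\times(\C^*)^{n_2}$ under $(\varsigma\circ\Phi_{A_1})\times(\varsigma\circ\Phi_{A_2})$ — whose closure is $X_{\Delta_1}\times X_{\Delta_2}$ — and using the factorization above, we obtain
\[
X_{\Delta_1\times\Delta_2}=\overline{\bigl(\varsigma\circ\Phi_{A_1\times A_2}\bigr)\!\bigl((\C^*)^{n_1+n_2}\bigr)}=\mathrm{S}\bigl(X_{\Delta_1}\times X_{\Delta_2}\bigr)\cong X_{\Delta_1}\times X_{\Delta_2},
\]
which proves the theorem. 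The only point requiring care is the bookkeeping in the second step: one must order $A_1\times A_2$ so that the projectivization $\varsigma$ of the product configuration is compatible with $\mathrm{S}$; everything else is routine. (An alternative route goes via normal fans, using $\Sigma_{\Delta_1\times\Delta_2}=\Sigma_{\Delta_1}\times\Sigma_{\Delta_2}$ together with the semigroup identity $\C[S_{\sigma_1\times\sigma_2}]\cong\C[S_{\sigma_1}]\otimes_\C\C[S_{\sigma_2}]$; but this yields the statement only for the normal toric varieties $X_{\Sigma_{\Delta_i}}$, which by Remark~\ref{rem:faces-cones} are the normalizations of the $X_{\Delta_i}$ and need not coincide with them.)
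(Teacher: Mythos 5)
The paper gives no proof of this statement; it is attributed to [Ful93, \S 1.4] and used later (in the proof of Theorem~\ref{thm:discr-infty}) for the product $\Lambda = \Sigma\times\Delta$ of two planar polytopes. Your Segre-embedding argument is correct and is, in fact, the appropriate proof for the paper's convention $X_\Delta := X_{\Delta\cap\Z^n}$, since it applies directly to the possibly non-normal projective toric varieties rather than only to their normalizations.

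Two remarks. First, you assert without justification that the closure of $Z = U\times V$ (with $U = \varsigma\circ\Phi_{A_1}((\C^*)^{n_1})$, $V = \varsigma\circ\Phi_{A_2}((\C^*)^{n_2})$) equals $\overline{U}\times\overline{V} = X_{\Delta_1}\times X_{\Delta_2}$. This is true but is not a topological tautology in the Zariski topology, which on a product is strictly coarser than the product topology; the standard justification is that by Chevalley's theorem $U$ (resp.\ $V$) contains a nonempty open subset of $\overline{U}$ (resp.\ $\overline{V}$), that $\overline{U}\times\overline{V}$ is irreducible as a product of irreducible varieties over $\C$, and that $U\times V$ therefore contains a dense open subset of it. A sentence to this effect would close the gap. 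Second, your parenthetical about the normal-fan route is a genuinely useful observation and is consistent with Remark~\ref{rem:faces-cones}: $\Sigma_{\Delta_1\times\Delta_2} = \Sigma_{\Delta_1}\times\Sigma_{\Delta_2}$ indeed holds, but that argument delivers the conclusion for $X_{\Sigma_{\Delta_i}}$, the normalizations, which need not be the $X_{\Delta_i}$. In the paper's concrete application one is saved by the fact that every lattice polygon is a normal polytope, hence so is a product of two, so $X_\Lambda = X_{\Sigma_\Lambda}$ there; but for the theorem as stated in arbitrary dimension, your Segre argument is the one that actually proves it.
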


\subsection{Singularities of the discriminant at the orbits}\label{sub:discr-orbits} For every set $H\subset\TTn$, we use $\overline{H}$ to denote the closure of its image in $X_\Delta$ under the map $\varsigma\circ \Phi_{\Delta}$. We call $\overline{H}$ the \emph{compactification of $H$ inside $X_\Delta$}. This compactification is said to be \emph{canonical} if $H$ is a hypersurface whose Newton polytope is $\Delta$. In this case, $\overline{H}$ does not intersect the zero-dimensional orbits of $X_\Delta$ (see e.g.,~\cite{khovanskii1978newton}). 
Then, we use $\partial H$ to denote the set $\overline{H}\setminus H\subset X_\Delta$.

\begin{remark}\label{rem:singular-orbit-Newton-non}
Note that if the canonical compactification $\overline{H}\subset X_\Delta$ has a singular point in $\partial H$, there is a non-transversal intersection of $\overline{H}$ with an orbit $V(\gamma)\cong\P^{\dim\gamma}$ of $X_\Delta$ for some $\gamma\prec\Delta$ (see Remark \ref{rem:faces-cones}). Thus, the polynomial $P$, defining $H$ is degenerate at $\gamma$. We conclude that if $H$ is Newton non-degenerate, then $\overline{H}$ is smooth at $\partial H$.
\end{remark}

\begin{theorem}\label{thm:discr-infty}
Let $A\in \ccCgeq$. Then, there is a dense subset $U_d\subset\C^A$, such that for any $f\in U_d$, the discriminant in the torus, $D:=\Dsf\cap\TT$, has only smooth points at the boundary of its compactification $\ovD\subset X_{\Delta}$:
\[
\Sin(\ovD)\cap \partial D =\emptyset.
\]
\end{theorem}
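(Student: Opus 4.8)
The plan is to reduce the statement about boundary singularities of $\ovD$ to the Newton non-degeneracy of the discriminant polynomial, and then to deduce that non-degeneracy from the corresponding property of the critical locus together with the transversality results already established for $\Csf$. By Remark~\ref{rem:singular-orbit-Newton-non}, if $\ovD$ has a singular point in $\partial D$, then there is a face $\gamma\prec\Delta$ at which the defining polynomial $\mathsf{D}_F|_f$ of $D$ is degenerate, i.e. the truncation $(\mathsf{D}_F|_f)_\gamma$ has a critical point in $(\C^*)^2$. So it suffices to produce a dense subset $U_d\subset\C^A$ on which $\cD^*_f$ is Newton non-degenerate, and then set $U_d$ to be the intersection of this locus with $U_1$ and $U_2$ from Lemmas~\ref{lem:Newton_polytopes-invar} and~\ref{lem:C_Newton-non}, so that $\NP(\cD^*_f)=\Delta$ and $\Csf$ is itself Newton non-degenerate.

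First I would recall the normal-fan picture: the face $\gamma\prec\Delta$ is cut out by a primitive inward normal $\alpha\in\Z^2$, and $(\mathsf{D}_F|_f)_\gamma$ is the facial form $(\cD^*_f)^\alpha$. I want to relate the facial forms of $\cD^*_f$ to those of $\det\Jac_z f$ along the discriminant construction. Since $\cD^*_f$ is obtained by eliminating $z$ from the ideal $\langle f_1-w_1,\, f_2-w_2,\, \det\Jac_z f\rangle$, a face truncation $(\cD^*_f)^\alpha$ is the discriminant of the leading-term system obtained by the corresponding weight degeneration in the source torus. Concretely: choose a weight $v\in\Z^2$ so that the $v$-initial forms of $f_1,f_2$ and of $\det\Jac_z f$ govern the behaviour of $\cDf$ in the direction $\alpha$; the point is that a critical point of $(\cD^*_f)^\alpha$ in $(\C^*)^2$ would force, via the projection $\pi_2$, a point $a\in(\C^*)^2$ lying on the facial critical curve $(\Csf)_\sigma$ for some edge $\sigma\prec\Sigma$ at which $f$ fails to be an immersion along that face — and this contradicts either the Newton non-degeneracy of $\Csf$ (Lemma~\ref{lem:C_Newton-non}) or the transversality of $\Lambda_f$ to $\Sigma$ and $\Sigma^1$ from Theorem~\ref{thm:critical-smooth}. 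The cleanest way to organize this is to use the toric morphism language of \S\ref{sss:toric_morph}: the map $f$ extends to a rational map $X_\Sigma \dashrightarrow X_\Delta$ (after suitable refinement of fans), and by Theorem~\ref{thm:toric_morphisms} its restriction to each orbit $V(\sigma)$ of $X_\Sigma$ is a toric morphism onto an orbit $V(\gamma)$ of $X_\Delta$; since $\overline{\Csf}\subset X_\Sigma$ meets each orbit transversally (this is exactly Newton non-degeneracy of $\Csf$, by Remark~\ref{rem:singular-orbit-Newton-non} applied to $\Sigma$), its image $\overline{\Dsf}$ meets the boundary orbits of $X_\Delta$ transversally as well, away from the points coming from the coordinate axes, which land in $\{uv=0\}$ and hence are excluded since $\bm 0\notin A_1\cup A_2$.

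The density of $U_d$ is then handled exactly as in Lemma~\ref{lem:C_Newton-non}: for each edge $\gamma\prec\Delta$, the condition that $(\cD^*_f)_\gamma$ be non-degenerate is the complement of an $A$-discriminant-type hypersurface pulled back under a dominant map $\C^A\to\C^{\gamma}$, hence Zariski open and dense, and a finite intersection over the (finitely many) faces of $\Delta$ stays dense; intersecting further with $U_1\cap U_2\cap U_c$ gives the desired $U_d$. The main obstacle I expect is making precise the passage from a boundary singularity of $\ovD$ to a failure of non-degeneracy \emph{of the critical locus}: one must track how the elimination producing $\mathsf{D}_F$ interacts with face truncations, i.e. that $(\mathsf{D}_F)_\gamma$ is, up to a monomial, the eliminant of the initial-form system $\langle (f_1)_{v}-w_1,\,(f_2)_v-w_2,\,(\det\Jac_z f)_\sigma\rangle$ for the matching weight $v$ and edge $\sigma$. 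Establishing this compatibility — essentially that the Newton polytope of an eliminant is computed facewise from the input Newton polytopes, with the relevant genericity — is the technical heart; once it is in place, the transversality input from Theorem~\ref{thm:critical-smooth} and the Newton non-degeneracy of $\Csf$ from Lemma~\ref{lem:C_Newton-non} close the argument. An alternative, perhaps safer route that avoids the elimination bookkeeping is to work directly with the stable immersion $\Phi_f$ of Theorem~\ref{thm:critical-smooth}: compactify $\Phi_f(\Csf)\subset\cJ^2$ along $\Delta$ and argue that, since the general-position conclusion $n\le 2$ there is insensitive to whether the points $a_i$ lie in the torus or on a lower orbit (the only constraint used was $a_i\notin f^{-1}(\bm 0)$, hence $a_i\ne\bm 0$), the same node-or-cusp dichotomy persists at the boundary, so no worse singularity can appear — but this requires checking that $\Phi_f$ remains an immersion in a neighbourhood of the boundary orbits, which again comes back to Newton non-degeneracy of $\Csf$.
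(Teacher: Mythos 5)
Your proposal correctly captures the \emph{cusp} part of the boundary analysis but has a genuine gap on the \emph{node} side. Your key claim is that since $\overline{\Csf}\subset X_\Sigma$ meets each orbit transversally, ``its image $\overline{\Dsf}$ meets the boundary orbits of $X_\Delta$ transversally as well.'' This implication is false: transversality (equivalently, smoothness of $\overline{\Csf}$ at $\partial C$) is a statement about each boundary point of $\overline{\Csf}$ individually, but the induced map $F:\overline{\Csf}\to\overline{\Dsf}$ can be $2$--to--$1$ on the boundary. If two distinct boundary points of $\overline{\Csf}$ (each perfectly transversal to its orbit) map to the same point $q\in\partial D$, then $\overline{\Dsf}$ has a node at $q$, which is a non-transversal intersection in the sense of Remark~\ref{rem:singular-orbit-Newton-non} --- and this is precisely what prevents $\cD^*_f$ from being Newton non-degenerate. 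In other words, Newton non-degeneracy of $\Csf$ rules out cusps at $\partial D$ (because a cusp forces the lifted curve $\overline{\bcC}\subset X_\Sigma\times X_\Delta$ to meet a $2$-orbit non-transversally, contradicting smoothness of $\overline{\Csf}$), but it does \emph{not} by itself rule out nodes at $\partial D$.

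The paper's proof therefore splits into three parts, with the third devoted exclusively to nodes. That part requires a genuinely additional genericity condition on $\C^A$, not derivable from transversality or non-degeneracy of $\Csf$: after parametrizing the two branches of $\Csf$ tending to the node by Puiseux series in directions $\alpha,\alpha'$ and distinguishing the cases $\sigma\neq\sigma'$ and $\sigma=\sigma'$ (the edges of $\Sigma$ these directions support), the hard case $\sigma=\sigma'$ reduces to a condition on a univariate rational function $H=x^m h_1^{\lambda_1}(x)h_2^{\lambda_2}(x)$ built from the restrictions of $f_1,f_2$ to a face $\gamma\prec A$: one needs $H$ not to have two critical points with the same critical value. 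The set $O_\gamma$ of parameters satisfying this is Zariski open and is intersected into $U_d$; this step has no counterpart in your proposal. Your stated ``main obstacle'' --- the face-wise compatibility of elimination --- is indeed relevant (it is essentially Remark~\ref{rem:rays} of the paper), but resolving it would not supply the missing genericity: even with perfect control of facial eliminants, nodes at the boundary arise from the non-injectivity of $F$ on $\partial C$, and must be excluded by a separate density argument. Your alternative route (pushing the stable-immersion general-position argument of Theorem~\ref{thm:critical-smooth} out to the boundary orbits) runs into exactly the same wall, since the jet-space argument is local at a single point of $\Dsf$ and does not constrain how many boundary points of $\overline{\Csf}$ lie over a given $q\in\partial D$.
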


\begin{proof}
We consider an element $f\in U_1\cap U_2$, in the common open sets from 
Lemmas~\ref{lem:Newton_polytopes-invar}, and~\ref{lem:C_Newton-non}. This proof is divided into three parts: \textbf{Part 1.} describes how $\ovD$ and its singularities at the orbits are obtained from a toric morphism on $\overline{C}:=\overline{\Csf}$. We furthermore show that $\ovD$ can only have cusps or nodes as singularities. \textbf{Part 2.} is devoted to showing that $\partial D$ has no cusps. In \textbf{Part 3.}, we construct a dense subset $U_N\subset\C^A$, where $\ovD$ has no nodes at $\partial D$ for any $f\in U_N$. The proof follows by taking $U_d:=U_1\cap U_2 \cap U_N$.

\textbf{Part 1.} From Lemmas~\ref{lem:Newton_polytopes-invar}, and~\ref{lem:C_Newton-non}, we have that $C$ irreducible, and thus so is $D$ irreducible. Since $C$ is the image of $\bcC:=\Vs(f_1 - w_1,~f_2 - w_2,~\det\Jac_z f )$ under the projection $\Proj:(\C^*)^4\to\TT$, $(z,w)\mapsto z$, the curve $D$ is the image of $\bcC$ under the second projection  $\pi:(\C^*)^4\to\TT$, $(z,w)\mapsto w$. Let $\overline{\bcC}$ denote the compactification of $\bcC$ in the toric variety $ X_\Sigma\times X_\Delta$, where $\Sigma:=\NP(\Csf)$ and $\Delta:=\NP(\Dsf)$. 

Theorem~\ref{thm:toric_product} shows that $X_\Sigma\times X_\Delta$ corresponds to the polytope $\Lambda:=\Sigma\times\Delta$. Then, using~\cite[Theorem 3.3.4]{cox2011toric}, we can extend $\Proj$ to the toric morphism $\overline{\Proj}:X_\Lambda\to X_\Sigma$, induced by the $\Z$-linear map $\psi:=\Proj_{|\Z^4}:\Z^4\to\Z^2$ compatible with the normal fans $\Gamma_{\Lambda}$ and $\Gamma_{\Sigma}$. Similarly, we get $\overline{\pi}:X_\Lambda\to X_\Delta$, induced by $\psi:=\pi_{|\Z^4}:\Z^4\to\Z^2$, compatible with $\Gamma_{\Lambda}$ and $\Gamma_{\Delta}$. We get $\Proj=\overline{\Proj}_{|(\C^*)^4}$ and $\pi=\ovpi_{|(\C^*)^4}$. From $\Proj(\bcC) = C$ and $\pi(\bcC) = D$, we get $\overline{\Proj}(\obcC) = \ovC$ and $\ovpi(\obcC) = \ovD$. Furthermore, from Theorem~\ref{thm:toric_morphisms}, we get $\overline{\Proj}(\partial\bcC ) = \partial C$ and $\ovpi(\partial\bcC )=\partial D$. This gives rise to the commutative diagram
\[
  \begin{tikzcd}
    (X_{\Lambda},~\obcC) \arrow{r}{\overline{\Proj}} \arrow[swap]{dr}{\ovpi} &     (X_{\Sigma},~\ovC) \arrow{d}{F} \\
     & (X_{\Delta},~\ovD),
  \end{tikzcd}
\] where $F: \ovC\to \ovD$ satisfies $f_{|C}=\overline{F}_{|C}$. 

Next, we describe the intersections of $\obcC$ with the orbits in $X_\Sigma\times X_\Delta\cong X_{\Lambda}$. Each such orbit can be expressed as $V(\sigma\times\delta)\cong V(\sigma)\times V(\delta)$, for some faces $\sigma\prec\Sigma$ and $\delta\prec\Delta$. On the one hand, since $\Sigma$ and $\Delta$ are the respective Newton polytopes of $C$ and $D$, the curves $\ovC\subset X_{\Sigma}$ and $\ovD\subset X_{\Delta}$ do not intersect the $0$-dimensional orbits in their respective canonical compactifications. Hence, orbits $V(\sigma\times\delta)$ corresponding to vertices, do not contain points of $\partial\bcC$. On the other hand, since both $\Proj(\bcC)$ and $\pi(\bcC)$ belong to the maximal orbit $\TT$ of $X_\Sigma$ and $X_\Delta$ respectively, orbits $V(\sigma\times\delta)$ of dimension higher than two also do not contain points of $\partial\bcC$. Therefore, Theorem~\ref{thm:toric_morphisms} yields
\begin{equation}\label{eq:discr-orbits}
\partial\bcC\in V(\sigma\times\delta)\quad\Longleftrightarrow\quad \partial C\in V(\sigma),~\partial D\in V(\delta)~\text{ and }~\dim \sigma = \dim\delta = 1.
\end{equation}
  Since $f\in U_2$, Remark~\ref{rem:singular-orbit-Newton-non} shows that $\ovC$ is smooth at $\partial{C}$, and thus $\obcC$ is smooth at $\partial \bcC$. Then, any $q\in\partial D$ is either smooth, is a node (i.e. $\#\ovpi_{|\obcC}^{-1}(q)\geq 2$), or a cusp (i.e. $\exists~\varrho\in\partial\cC$ smooth point, $q=\ovpi(\varrho)$, and $\varrho\in\Crit(\ovpi_{|\obcC})$). 

\textbf{Part 2.} We show that $\ovD$ has no cusps at $\partial D$: Assume by contradiction that $q$ is a cusp, and let $\bm{V}:=V(\sigma\times\delta)$ denote the $2$-orbit as in~\eqref{eq:discr-orbits} containing the critical point $\varrho=:(p,q)$. Theorems~\ref{thm:toric_product} and~\ref{thm:toric_morphisms} show that $\ovpi_{|\bm{V}}:\bm{V}\longrightarrow V(\delta)$ is also a projection forgetting $p$. Hence, the curve $\obcC$ intersects $\bm{V}$ non-transversally at $(p,q)$, and thus the tangent $T_\varrho\obcC$ is parallel to the orbit $V(\sigma)\subset X_{\Sigma}$. This shows that $\ovC$ does not intersect transversally the orbit $V(\sigma)$, which contradicts to Lemma~\ref{lem:C_Newton-non} according to Remark~\ref{rem:singular-orbit-Newton-non}. 

\textbf{Part 3.} We construct the dense subset $U_N\subset\C^A$ mentioned at the beginning. 
 For any $f\in\C^A$, a node $q\in\partial D$ of $\ovD$ satisfies $\ovpi(\varrho) = \ovpi(\varrho') = q$ for some distinct $\varrho,\varrho'\in \partial\bcC\cap X_{\Lambda}$. 
 Then, there are two curves $\varphi,\varphi':~]0,1[\longrightarrow C$ in two different branches of $C$, satisfying 
	\[
	\lim_{t\to 0} \overline{\Phi}_\Lambda\big(\varphi(t),f(\varphi(t))\big) =\varrho\quad\text{and}\quad\lim_{t\to 0} \overline{\Phi}_\Lambda\big(\varphi'(t),f(\varphi'(t))\big) = \varrho',
	\] where $\overline{\Phi}_\Lambda:=\varsigma\circ\Phi_\Lambda:(\C^*)^4\to X_\Lambda\subset\P^{\#\Lambda\cap \N^4}$ with notations from~\S\ref{sss:toric-lattice}. Since it holds $\overline{\Phi}_\Lambda(C)=\obcC$, the images of $F\circ \overline{\Phi}_\Sigma $ and of $\overline{\pi}\circ\overline{\Phi}_\Lambda$ coincide at the curves $\varphi(]0,1[)$ and $\varphi'(]0,1[)$. Thus, we obtain 
	\begin{align}
	\lim_{t\to 0} F\circ \overline{\Phi}_\Sigma(\varphi(t)) = \lim_{t\to 0} F\circ \overline{\Phi}_\Sigma(\varphi'(t)) &=  0\label{eqs:limits2-2}.	
	\end{align} Using the curve selection Lemma (see e.g.,~\cite{bochnak2013real} or~\cite[Lemma 3.1]{jelonek2005quantitative}) we replace the families $\varphi(t)$ and $\varphi'(t)$ by Puiseux series in $t$ with complex coefficients
	\begin{align*}
	\varphi(t) & = (a_1t^{\alpha_1} + b_1t^{\geq \alpha_1}+ \cdots,~a_2t^{\alpha_2} + b_2t^{\geq \alpha_1}+ \cdots ) \\
	\varphi'(t) & = (a'_1t^{\alpha'_1} + b'_1t^{\geq \alpha'_1}+ \cdots,~a'_2t^{\alpha'_2} + b_2t^{\geq \alpha'_1}+ \cdots ),
	\end{align*} where $\alpha:=(\alpha_1,\alpha_2)$ and $\alpha':=(\alpha'_1,\alpha'_2)$ are primitive integer vectors. Recall that each of $\varrho$ and $\varrho'$ belong to a $2$-orbit corresponding to proper faces of $\Sigma$ and $\Delta$. Then, we get $\alpha\neq (0,0)$ and $\alpha'\neq (0,0)$. Let $\sigma,\sigma'\prec\Sigma$ be the two faces supported by $\alpha$ and $\alpha'$ respectively. If we substitute $\varphi(t)$ in $Q$
and set to zero the  coefficient of the smallest power of $t$, say $\omega^*$,
then
we obtain $Q_{\sigma}(a_1,a_2)=0$.
Indeed, we have $\omega^*=\min(\langle\alpha,\omega\rangle~|~\omega\in \Sigma)$, attained only for points $w$ in $\sigma$. This also holds true for $\varphi'(t)$. 
Thus, we get  
	\begin{equation}\label{eq:solutions-to_Jac}
		a:=(a_1,~a_2)\in\Vs(Q_\sigma)\quad\text{and}\quad a':=(a_1',~a_2')\in\Vs(Q_{\sigma'}).
	\end{equation}
	
	Assume first that $\sigma\neq \sigma'$. Then, in a dense subset $U_{\sigma}\subset\C^A$, any choice of $f$ makes the coefficients in $Q_\sigma$ and $Q_{\sigma'}$ independent so that~\eqref{eqs:limits2-2} is not satisfied. Let $U_\Sigma$ denote the intersection of all such subsets $U_\sigma$. It follows that for maps $f\in U_\Sigma$, the canonical compactification $\ovD$ of $D$ cannot have nodes at $\partial D$ arising from two non-parallel branches. 
	
	 Assume in what follows that $\sigma= \sigma'$, and let $v\in\Z^2$ be a primitive integer vector directing the edge $\sigma$. Then, there exists $w\in\Z^2$ and $c_1,\ldots,c_k\in\C^*$, such that
\begin{equation*}
Q_\sigma = z^w\prod_{i=1}^k(z^v - c_i).
\end{equation*} Note that $c_1,\ldots,c_k$ are pairwise-distinct as $Q$ is Newton non-degenerate (Lemma~\ref{lem:C_Newton-non}). Furthermore, since $\varphi(t)$ and $\varphi'(t)$ are taken from two different branches of $C$, and since $Q$ is Newton non-degenerate, identities~\eqref{eq:solutions-to_Jac} imply that
\begin{equation}\label{eq:different_in_orbit}
a^v \neq (a')^{v}. 
\end{equation} 

Let $\gamma$ be the face of $A$ supported by $\alpha$ (recall that $\sigma=\sigma'$ $\Rightarrow$ $\alpha=\alpha'$). Then, we can express $f_{\gamma_1}$ and $f_{\gamma_2}$ as $z^rh_1(z^v)$ and $z^sh_2(z^v)$, where $h_1,h_2\in\C[x]$, and $r,s\in\N^2$. Using similar arguments as above for~\eqref{eq:solutions-to_Jac}, the (projective) coordinates of $q\in \partial C$ at the $1$-orbit $V(\delta)\cong \P^1$ are given by $f_\gamma(a)=(f_{\gamma_1}(a),~f_{\gamma_2}(a))$. Therefore, relation~\eqref{eqs:limits2-2} implies that $f_\gamma(a)$ and $f_\gamma(a')$ are the same point up to the torus action on $V(\delta)$. Taking the quotient by this action we get
\begin{equation}\label{eq:equal_in_orbit}
(f_{\gamma_1}(a))^{\lambda_1}\cdot (f_{\gamma_2}(a))^{\lambda_2} = (f_{\gamma_1}(a'))^{\lambda_1}\cdot (f_{\gamma_2}(a'))^{\lambda_2},
\end{equation} where $\lambda:=(\lambda_1,\lambda_2)\in\Z^2$ satisfies 
\begin{equation}\label{eq:diophantine}
\lambda_1\cdot r + \lambda_2\cdot s = m\cdot v
\end{equation} for some $m\in\N^*$. Indeed, for any $z\in\C^*$, the polynomial $(f_\gamma)^\lambda(z)$ is a univariate rational function $x^m\cdot h_1^{\lambda_1}(x)h_2^{\lambda_2}(x)$, where $x:=z^v$. The existence of such $\lambda\in\Z^2$ from~\eqref{eq:diophantine} is guaranteed by the Chinese Remainder Theorem.

We conclude from relations~\eqref{eq:solutions-to_Jac},~\eqref{eq:different_in_orbit} and~\eqref{eq:equal_in_orbit} that if $\ovC$ has a node in $\partial C$, then for some distinct $i,j\in [k]$, the system 
\begin{align*}
\zeta^v = & c_i\\
\theta^v = & c_j\\
(f_{\gamma}(\zeta))^{\lambda} - (f_{\gamma}(\theta))^{\lambda}=&0,
\end{align*} has a solution $(\zeta,\theta)\in (\C^*)^4$. Recall that $c_i$ and $c_j$ are taken from the expression of $Q_\sigma=\det\Jac_z f_\gamma$. Then, it is an easy computation to check that, for $x=z^v$, such a solution is equivalent to $H:=x^m\cdot h_1^{\lambda_1}(x)h_2^{\lambda_2}(x)$ having two critical points $x_0,x_1\in\C^*$ satisfying $H(x_0) = H(x_1)$. The set of polynomials $h_1,h_2\in\C[x]$ for which this condition is not satisfied forms a Zarisky open $O_\gamma$ inside $\C^{\gamma}$. 

To conclude, we take the above set $U_N$ to be the common locus of $U_\Sigma$ with the preimages of all $O_\gamma$ under the corresponding projections $\C^A\longrightarrow\C^\gamma$, $f\longmapsto f_{\gamma}$. This yields the proof.
\end{proof}

\begin{remark}\label{rem:rays}
In the notations of the proof of Theorem~\ref{thm:discr-infty}, we can show that the vector $\lambda$ directs a face of $\Delta$. Indeed, each branch of $D\subset \TT$, parametrized by $\psi:~]0,1[\to D$ is the image of a branch of $C$ under the map $f$. The leading terms of $f(\varphi(t))$ are written as
\[
\big(a^rt^{r_1\alpha_1+r_2\alpha_2}\cdot h_1(a^vt^{v_1\alpha_1+v_2\alpha_2}),~a^st^{s_1\alpha_1+s_2\alpha_2}h_2(a^vt^{v_1\alpha_1+v_2\alpha_2})\big).
\] Since $\alpha$ is orthogonal to $v$, there exists $d_1,d_2\in\C^*$ such that 
\[
f(\varphi(t))\sim(d_1 t^{\beta_1},~d_2 t^{\beta_2})\text{ as }t\rightarrow 0,
\] where 
\[
\beta_1 = r_1\alpha_1+r_2\alpha_2,~\beta_2 = s_1\alpha_1+s_2\alpha_2,~d_1=a^rh_1(a^v),\text{ and }d_2=a^sh_2(a^v).
\] Similarly as above for~\eqref{eq:solutions-to_Jac}, as $t\rightarrow 0$, the point $(d_1,d_2)$ is in the zero locus $\V_*(D_{\tilde{\delta}})$ for some edge $\tilde{\delta}\prec \Delta$, supported by $\beta:=(\beta_1,\beta_2)$. In fact, $q \in V(\delta)$ implies that $\delta=\tilde{\delta}$. Since $\alpha$ is orthogonal to $v$, it is an easy computation to check that $\beta$ is orthogonal to $\lambda$. It follows that $\lambda$ is a vector directing the edge $\delta\prec\Delta$. Notice that every edge of $\Delta$ can be characterized in this way.
\end{remark}

\section{Singular points of the non-properness set}\label{sec:non-prop}

In this section, we present the main result addressing Theorem~\ref{thm:discriminant}~\ref{it:non-properness}. For any pair $\Pi$ of polytopes $\Pi_1,\Pi_2\subset\R^2$ of polytopes, if $\gamma$ is a pair of faces $(\gamma_1, \gamma_2)$ such that $\gamma_1 \prec \Pi_1$, $\gamma_2 \prec \Pi_2$, and $\gamma_1 \oplus \gamma_2\prec \Pi_1 \oplus \Pi_2$, then we say that $\gamma$ is a \emph{face} of $\Pi$, and we write $\gamma\prec \Pi$. The following definition distinguishes these types of faces, while Figure~\ref{fig:pair-example} provides relevant illustrations.

\begin{definition}
  \label{def:various-faces}
  Let $A\in\ccPgeq$, let $\gamma:=(\gamma_1, \gamma_2)$ be a face of $A^0$ (recall notations before Theorem~\ref{thm:discriminant}), and let $\alpha:=(\alpha_1,\alpha_2)\in\Z^2$ be one of the supporting inner vectors of $\gamma_1\oplus\gamma_2\prec A_1^0\oplus A_2^0$. We say that $\gamma$ is 
  an \emph{edge} if $\dim(\gamma_1\oplus\gamma_2) = 1$. An edge is
  \begin{itemize} 
  \item \emph{long} if  $\dim(\gamma_1) = \dim(\gamma_2) = 1$,
  \item \emph{short} if it is not long, 

  \item \emph{semi-origin} if $\bm{0}:=(0, 0) \in H$, where $H$ is the supporting line of either $\gamma_1$ or of $\gamma_2$, and 
\begin{equation}\label{eq:condition_semi}
  \dim\gamma_i =0\Longrightarrow \bm{0}\in \gamma_i,~i\in\{1,2\},
\end{equation}
  \item \emph{origin} if it is semi-origin, and the above line $H$ supports both $\gamma_1$ and $\gamma_2$,
  \item \emph{dicritical} if it is semi-origin, and either $\alpha_1$ or $\alpha_2$ is negative,
  
    \item \emph{lower}/\emph{upper} if it is dicritical and $\alpha_1<0$/~$\alpha_2<0$.
 
  \end{itemize}
\end{definition}

\begin{example}\label{ex:edges}
The pair $A^0$ in Figure~\ref{fig:pair-example} admits two semi-origin edges: One, denoted by $\delta$ is represented in homogeneous dashing, it is long, upper dicritical, and not origin. The other, $\gamma$, is represented by a ``dot-dash'' pattern, and is also long, lower dicritical, and origin. The pair $A^0$ has at least one edge that is not semi-origin. For example, the short one, $(\gamma'_1,\gamma'_2)$, where $\gamma'_1$ is a dotted segment, and $\gamma'_2$ is the vertex square labelled ``$\square$'' situated at $(5,5)$.
\end{example}

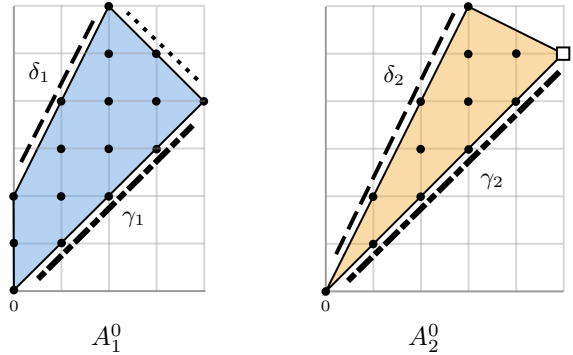
\begin{figure}[htb]

\tikzset{every picture/.style={line width=0.75pt}} 

\begin{tikzpicture}[x=0.75pt,y=0.75pt,yscale=-1,xscale=1]

\draw [color={rgb, 255:red, 155; green, 155; blue, 155 }  ,draw opacity=0.4 ][fill={rgb, 255:red, 155; green, 155; blue, 155 }  ,fill opacity=0.3 ]   (212.33,156.15) -- (332.02,156.15) ;
\draw [color={rgb, 255:red, 155; green, 155; blue, 155 }  ,draw opacity=0.4 ][fill={rgb, 255:red, 155; green, 155; blue, 155 }  ,fill opacity=0.3 ]   (308.33,276.84) -- (308.33,132) ;
\draw [color={rgb, 255:red, 155; green, 155; blue, 155 }  ,draw opacity=0.4 ][fill={rgb, 255:red, 155; green, 155; blue, 155 }  ,fill opacity=0.3 ]   (212.33,204.15) -- (332.02,204.15) ;
\draw [color={rgb, 255:red, 155; green, 155; blue, 155 }  ,draw opacity=0.4 ][fill={rgb, 255:red, 155; green, 155; blue, 155 }  ,fill opacity=0.3 ]   (332.33,276.11) -- (332.33,131.27) ;
\draw [color={rgb, 255:red, 155; green, 155; blue, 155 }  ,draw opacity=0.4 ][fill={rgb, 255:red, 155; green, 155; blue, 155 }  ,fill opacity=0.3 ]   (212.33,252.15) -- (332.02,252.15) ;
\draw [color={rgb, 255:red, 155; green, 155; blue, 155 }  ,draw opacity=0.4 ][fill={rgb, 255:red, 155; green, 155; blue, 155 }  ,fill opacity=0.3 ]   (236.33,275.65) -- (236.33,131.81) ;
\draw [color={rgb, 255:red, 155; green, 155; blue, 155 }  ,draw opacity=0.4 ][fill={rgb, 255:red, 155; green, 155; blue, 155 }  ,fill opacity=0.3 ]   (260.33,276.85) -- (260.33,132) ;
\draw [color={rgb, 255:red, 155; green, 155; blue, 155 }  ,draw opacity=0.4 ][fill={rgb, 255:red, 155; green, 155; blue, 155 }  ,fill opacity=0.3 ]   (284.33,276.49) -- (284.33,132.65) ;
\draw [color={rgb, 255:red, 155; green, 155; blue, 155 }  ,draw opacity=1 ]   (55,276) -- (151.57,276) ;
\draw [color={rgb, 255:red, 155; green, 155; blue, 155 }  ,draw opacity=0.4 ][fill={rgb, 255:red, 155; green, 155; blue, 155 }  ,fill opacity=0.3 ]   (55,252) -- (151.32,252) ;
\draw [color={rgb, 255:red, 155; green, 155; blue, 155 }  ,draw opacity=0.4 ][fill={rgb, 255:red, 155; green, 155; blue, 155 }  ,fill opacity=0.3 ]   (55,228) -- (151.32,228) ;
\draw [color={rgb, 255:red, 155; green, 155; blue, 155 }  ,draw opacity=1 ]   (55,276.05) -- (55,132.24) ;
\draw [color={rgb, 255:red, 155; green, 155; blue, 155 }  ,draw opacity=0.4 ][fill={rgb, 255:red, 155; green, 155; blue, 155 }  ,fill opacity=0.3 ]   (79,275.51) -- (79,131.67) ;
\draw [color={rgb, 255:red, 155; green, 155; blue, 155 }  ,draw opacity=0.4 ][fill={rgb, 255:red, 155; green, 155; blue, 155 }  ,fill opacity=0.3 ]   (103,276.7) -- (103,132.86) ;
\draw [color={rgb, 255:red, 155; green, 155; blue, 155 }  ,draw opacity=0.4 ][fill={rgb, 255:red, 155; green, 155; blue, 155 }  ,fill opacity=0.3 ]   (127,276.34) -- (127,132.5) ;
\draw  [fill={rgb, 255:red, 74; green, 144; blue, 226 }  ,fill opacity=0.4 ][line width=0.75]  (102.83,132.22) -- (150.83,180.22) -- (55,275.51) -- (54.83,228.22) -- cycle ;
\draw  [fill={rgb, 255:red, 0; green, 0; blue, 0 }  ,fill opacity=1 ] (101.17,228.22) .. controls (101.17,227.3) and (101.91,226.55) .. (102.83,226.55) .. controls (103.75,226.55) and (104.5,227.3) .. (104.5,228.22) .. controls (104.5,229.14) and (103.75,229.88) .. (102.83,229.88) .. controls (101.91,229.88) and (101.17,229.14) .. (101.17,228.22) -- cycle ;
\draw  [fill={rgb, 255:red, 0; green, 0; blue, 0 }  ,fill opacity=1 ] (53.17,228.22) .. controls (53.17,227.3) and (53.91,226.55) .. (54.83,226.55) .. controls (55.75,226.55) and (56.5,227.3) .. (56.5,228.22) .. controls (56.5,229.14) and (55.75,229.88) .. (54.83,229.88) .. controls (53.91,229.88) and (53.17,229.14) .. (53.17,228.22) -- cycle ;
\draw  [fill={rgb, 255:red, 0; green, 0; blue, 0 }  ,fill opacity=1 ] (330.35,156.15) .. controls (330.35,155.23) and (331.09,154.48) .. (332.02,154.48) .. controls (332.94,154.48) and (333.68,155.23) .. (333.68,156.15) .. controls (333.68,157.07) and (332.94,157.81) .. (332.02,157.81) .. controls (331.09,157.81) and (330.35,157.07) .. (330.35,156.15) -- cycle ;
\draw [color={rgb, 255:red, 155; green, 155; blue, 155 }  ,draw opacity=0.4 ][fill={rgb, 255:red, 155; green, 155; blue, 155 }  ,fill opacity=0.3 ]   (151,276.5) -- (151,132.66) ;
\draw  [fill={rgb, 255:red, 0; green, 0; blue, 0 }  ,fill opacity=1 ] (149.17,180.22) .. controls (149.17,179.3) and (149.91,178.55) .. (150.83,178.55) .. controls (151.75,178.55) and (152.5,179.3) .. (152.5,180.22) .. controls (152.5,181.14) and (151.75,181.88) .. (150.83,181.88) .. controls (149.91,181.88) and (149.17,181.14) .. (149.17,180.22) -- cycle ;
\draw  [fill={rgb, 255:red, 0; green, 0; blue, 0 }  ,fill opacity=1 ] (125.33,204.37) .. controls (125.33,203.45) and (126.08,202.7) .. (127,202.7) .. controls (127.92,202.7) and (128.67,203.45) .. (128.67,204.37) .. controls (128.67,205.29) and (127.92,206.03) .. (127,206.03) .. controls (126.08,206.03) and (125.33,205.29) .. (125.33,204.37) -- cycle ;
\draw [color={rgb, 255:red, 155; green, 155; blue, 155 }  ,draw opacity=0.4 ][fill={rgb, 255:red, 155; green, 155; blue, 155 }  ,fill opacity=0.3 ]   (55,204) -- (151.32,204) ;
\draw [color={rgb, 255:red, 155; green, 155; blue, 155 }  ,draw opacity=0.4 ][fill={rgb, 255:red, 155; green, 155; blue, 155 }  ,fill opacity=0.3 ]   (55,180) -- (151.32,180) ;
\draw [color={rgb, 255:red, 155; green, 155; blue, 155 }  ,draw opacity=0.4 ][fill={rgb, 255:red, 155; green, 155; blue, 155 }  ,fill opacity=0.3 ]   (55,156) -- (151.32,156) ;
\draw [color={rgb, 255:red, 155; green, 155; blue, 155 }  ,draw opacity=0.4 ][fill={rgb, 255:red, 155; green, 155; blue, 155 }  ,fill opacity=0.3 ]   (55,132) -- (151.32,132) ;
\draw  [fill={rgb, 255:red, 0; green, 0; blue, 0 }  ,fill opacity=1 ] (77.17,228.22) .. controls (77.17,227.3) and (77.91,226.55) .. (78.83,226.55) .. controls (79.75,226.55) and (80.5,227.3) .. (80.5,228.22) .. controls (80.5,229.14) and (79.75,229.88) .. (78.83,229.88) .. controls (77.91,229.88) and (77.17,229.14) .. (77.17,228.22) -- cycle ;
\draw  [fill={rgb, 255:red, 0; green, 0; blue, 0 }  ,fill opacity=1 ] (101.17,132.22) .. controls (101.17,131.3) and (101.91,130.55) .. (102.83,130.55) .. controls (103.75,130.55) and (104.5,131.3) .. (104.5,132.22) .. controls (104.5,133.14) and (103.75,133.88) .. (102.83,133.88) .. controls (101.91,133.88) and (101.17,133.14) .. (101.17,132.22) -- cycle ;
\draw  [fill={rgb, 255:red, 0; green, 0; blue, 0 }  ,fill opacity=1 ] (77.17,180.22) .. controls (77.17,179.3) and (77.91,178.55) .. (78.83,178.55) .. controls (79.75,178.55) and (80.5,179.3) .. (80.5,180.22) .. controls (80.5,181.14) and (79.75,181.88) .. (78.83,181.88) .. controls (77.91,181.88) and (77.17,181.14) .. (77.17,180.22) -- cycle ;
\draw  [fill={rgb, 255:red, 0; green, 0; blue, 0 }  ,fill opacity=1 ] (101.17,156.22) .. controls (101.17,155.3) and (101.91,154.55) .. (102.83,154.55) .. controls (103.75,154.55) and (104.5,155.3) .. (104.5,156.22) .. controls (104.5,157.14) and (103.75,157.88) .. (102.83,157.88) .. controls (101.91,157.88) and (101.17,157.14) .. (101.17,156.22) -- cycle ;
\draw  [fill={rgb, 255:red, 0; green, 0; blue, 0 }  ,fill opacity=1 ] (125.17,156.22) .. controls (125.17,155.3) and (125.91,154.55) .. (126.83,154.55) .. controls (127.75,154.55) and (128.5,155.3) .. (128.5,156.22) .. controls (128.5,157.14) and (127.75,157.88) .. (126.83,157.88) .. controls (125.91,157.88) and (125.17,157.14) .. (125.17,156.22) -- cycle ;
\draw  [fill={rgb, 255:red, 0; green, 0; blue, 0 }  ,fill opacity=1 ] (125.17,180.22) .. controls (125.17,179.3) and (125.91,178.55) .. (126.83,178.55) .. controls (127.75,178.55) and (128.5,179.3) .. (128.5,180.22) .. controls (128.5,181.14) and (127.75,181.88) .. (126.83,181.88) .. controls (125.91,181.88) and (125.17,181.14) .. (125.17,180.22) -- cycle ;
\draw  [fill={rgb, 255:red, 0; green, 0; blue, 0 }  ,fill opacity=1 ] (101.17,180.22) .. controls (101.17,179.3) and (101.91,178.55) .. (102.83,178.55) .. controls (103.75,178.55) and (104.5,179.3) .. (104.5,180.22) .. controls (104.5,181.14) and (103.75,181.88) .. (102.83,181.88) .. controls (101.91,181.88) and (101.17,181.14) .. (101.17,180.22) -- cycle ;
\draw  [fill={rgb, 255:red, 0; green, 0; blue, 0 }  ,fill opacity=1 ] (101.17,204.22) .. controls (101.17,203.3) and (101.91,202.55) .. (102.83,202.55) .. controls (103.75,202.55) and (104.5,203.3) .. (104.5,204.22) .. controls (104.5,205.14) and (103.75,205.88) .. (102.83,205.88) .. controls (101.91,205.88) and (101.17,205.14) .. (101.17,204.22) -- cycle ;
\draw  [fill={rgb, 255:red, 0; green, 0; blue, 0 }  ,fill opacity=1 ] (77.17,204.22) .. controls (77.17,203.3) and (77.91,202.55) .. (78.83,202.55) .. controls (79.75,202.55) and (80.5,203.3) .. (80.5,204.22) .. controls (80.5,205.14) and (79.75,205.88) .. (78.83,205.88) .. controls (77.91,205.88) and (77.17,205.14) .. (77.17,204.22) -- cycle ;
\draw [color={rgb, 255:red, 155; green, 155; blue, 155 }  ,draw opacity=1 ]   (212.33,276.15) -- (332.32,276.15) ;
\draw [color={rgb, 255:red, 155; green, 155; blue, 155 }  ,draw opacity=0.4 ][fill={rgb, 255:red, 155; green, 155; blue, 155 }  ,fill opacity=0.3 ]   (212.33,228.15) -- (332.02,228.15) ;
\draw [color={rgb, 255:red, 155; green, 155; blue, 155 }  ,draw opacity=0.4 ][fill={rgb, 255:red, 155; green, 155; blue, 155 }  ,fill opacity=0.3 ]   (212.33,180.15) -- (331.88,180.15) ;
\draw [color={rgb, 255:red, 155; green, 155; blue, 155 }  ,draw opacity=1 ]   (212.33,276.2) -- (212.33,132.39) ;
\draw  [fill={rgb, 255:red, 245; green, 166; blue, 35 }  ,fill opacity=0.4 ][line width=0.75]  (284.17,132.36) -- (332.02,156.15) -- (260.17,228.36) -- (212.33,276.2) -- cycle ;
\draw  [fill={rgb, 255:red, 0; green, 0; blue, 0 }  ,fill opacity=1 ] (258.5,228.36) .. controls (258.5,227.44) and (259.25,226.7) .. (260.17,226.7) .. controls (261.09,226.7) and (261.83,227.44) .. (261.83,228.36) .. controls (261.83,229.28) and (261.09,230.03) .. (260.17,230.03) .. controls (259.25,230.03) and (258.5,229.28) .. (258.5,228.36) -- cycle ;
\draw  [fill={rgb, 255:red, 0; green, 0; blue, 0 }  ,fill opacity=1 ] (306.5,180.36) .. controls (306.5,179.44) and (307.25,178.7) .. (308.17,178.7) .. controls (309.09,178.7) and (309.83,179.44) .. (309.83,180.36) .. controls (309.83,181.28) and (309.09,182.03) .. (308.17,182.03) .. controls (307.25,182.03) and (306.5,181.28) .. (306.5,180.36) -- cycle ;
\draw  [fill={rgb, 255:red, 0; green, 0; blue, 0 }  ,fill opacity=1 ] (282.67,204.51) .. controls (282.67,203.59) and (283.41,202.85) .. (284.33,202.85) .. controls (285.25,202.85) and (286,203.59) .. (286,204.51) .. controls (286,205.43) and (285.25,206.18) .. (284.33,206.18) .. controls (283.41,206.18) and (282.67,205.43) .. (282.67,204.51) -- cycle ;
\draw [color={rgb, 255:red, 155; green, 155; blue, 155 }  ,draw opacity=0.4 ][fill={rgb, 255:red, 155; green, 155; blue, 155 }  ,fill opacity=0.3 ]   (212.33,132.15) -- (332.02,132.15) ;
\draw  [fill={rgb, 255:red, 0; green, 0; blue, 0 }  ,fill opacity=1 ] (234.5,228.36) .. controls (234.5,227.44) and (235.25,226.7) .. (236.17,226.7) .. controls (237.09,226.7) and (237.83,227.44) .. (237.83,228.36) .. controls (237.83,229.28) and (237.09,230.03) .. (236.17,230.03) .. controls (235.25,230.03) and (234.5,229.28) .. (234.5,228.36) -- cycle ;
\draw  [fill={rgb, 255:red, 0; green, 0; blue, 0 }  ,fill opacity=1 ] (282.5,132.36) .. controls (282.5,131.44) and (283.25,130.7) .. (284.17,130.7) .. controls (285.09,130.7) and (285.83,131.44) .. (285.83,132.36) .. controls (285.83,133.28) and (285.09,134.03) .. (284.17,134.03) .. controls (283.25,134.03) and (282.5,133.28) .. (282.5,132.36) -- cycle ;
\draw  [fill={rgb, 255:red, 0; green, 0; blue, 0 }  ,fill opacity=1 ] (306.72,156.14) .. controls (306.72,155.22) and (307.47,154.47) .. (308.39,154.47) .. controls (309.31,154.47) and (310.06,155.22) .. (310.06,156.14) .. controls (310.06,157.06) and (309.31,157.81) .. (308.39,157.81) .. controls (307.47,157.81) and (306.72,157.06) .. (306.72,156.14) -- cycle ;
\draw  [fill={rgb, 255:red, 0; green, 0; blue, 0 }  ,fill opacity=1 ] (282.5,156.36) .. controls (282.5,155.44) and (283.25,154.7) .. (284.17,154.7) .. controls (285.09,154.7) and (285.83,155.44) .. (285.83,156.36) .. controls (285.83,157.28) and (285.09,158.03) .. (284.17,158.03) .. controls (283.25,158.03) and (282.5,157.28) .. (282.5,156.36) -- cycle ;
\draw  [fill={rgb, 255:red, 0; green, 0; blue, 0 }  ,fill opacity=1 ] (282.5,180.36) .. controls (282.5,179.44) and (283.25,178.7) .. (284.17,178.7) .. controls (285.09,178.7) and (285.83,179.44) .. (285.83,180.36) .. controls (285.83,181.28) and (285.09,182.03) .. (284.17,182.03) .. controls (283.25,182.03) and (282.5,181.28) .. (282.5,180.36) -- cycle ;
\draw  [fill={rgb, 255:red, 0; green, 0; blue, 0 }  ,fill opacity=1 ] (258.5,180.36) .. controls (258.5,179.44) and (259.25,178.7) .. (260.17,178.7) .. controls (261.09,178.7) and (261.83,179.44) .. (261.83,180.36) .. controls (261.83,181.28) and (261.09,182.03) .. (260.17,182.03) .. controls (259.25,182.03) and (258.5,181.28) .. (258.5,180.36) -- cycle ;
\draw  [fill={rgb, 255:red, 0; green, 0; blue, 0 }  ,fill opacity=1 ] (258.5,204.36) .. controls (258.5,203.44) and (259.25,202.7) .. (260.17,202.7) .. controls (261.09,202.7) and (261.83,203.44) .. (261.83,204.36) .. controls (261.83,205.28) and (261.09,206.03) .. (260.17,206.03) .. controls (259.25,206.03) and (258.5,205.28) .. (258.5,204.36) -- cycle ;
\draw  [fill={rgb, 255:red, 0; green, 0; blue, 0 }  ,fill opacity=1 ] (77.41,251.75) .. controls (77.41,250.83) and (78.16,250.09) .. (79.08,250.09) .. controls (80,250.09) and (80.75,250.83) .. (80.75,251.75) .. controls (80.75,252.67) and (80,253.42) .. (79.08,253.42) .. controls (78.16,253.42) and (77.41,252.67) .. (77.41,251.75) -- cycle ;
\draw  [fill={rgb, 255:red, 0; green, 0; blue, 0 }  ,fill opacity=1 ] (53.33,251.75) .. controls (53.33,250.83) and (54.08,250.09) .. (55,250.09) .. controls (55.92,250.09) and (56.67,250.83) .. (56.67,251.75) .. controls (56.67,252.67) and (55.92,253.42) .. (55,253.42) .. controls (54.08,253.42) and (53.33,252.67) .. (53.33,251.75) -- cycle ;
\draw  [fill={rgb, 255:red, 0; green, 0; blue, 0 }  ,fill opacity=1 ] (53.33,275.51) .. controls (53.33,274.59) and (54.08,273.84) .. (55,273.84) .. controls (55.92,273.84) and (56.67,274.59) .. (56.67,275.51) .. controls (56.67,276.43) and (55.92,277.17) .. (55,277.17) .. controls (54.08,277.17) and (53.33,276.43) .. (53.33,275.51) -- cycle ;
\draw  [fill={rgb, 255:red, 0; green, 0; blue, 0 }  ,fill opacity=1 ] (210.67,276.2) .. controls (210.67,275.28) and (211.41,274.53) .. (212.33,274.53) .. controls (213.25,274.53) and (214,275.28) .. (214,276.2) .. controls (214,277.12) and (213.25,277.87) .. (212.33,277.87) .. controls (211.41,277.87) and (210.67,277.12) .. (210.67,276.2) -- cycle ;
\draw  [fill={rgb, 255:red, 0; green, 0; blue, 0 }  ,fill opacity=1 ] (234.58,252.25) .. controls (234.58,251.33) and (235.33,250.59) .. (236.25,250.59) .. controls (237.17,250.59) and (237.92,251.33) .. (237.92,252.25) .. controls (237.92,253.17) and (237.17,253.92) .. (236.25,253.92) .. controls (235.33,253.92) and (234.58,253.17) .. (234.58,252.25) -- cycle ;
\draw [line width=2.25]  [dash pattern={on 3.75pt off 1.5pt on 7.5pt off 1.5pt}]  (67.17,269.88) -- (145.17,192.38) ;
\draw [line width=2.25]  [dash pattern={on 3.75pt off 1.5pt on 7.5pt off 1.5pt}]  (223.5,270.88) -- (330.62,164.85) ;
\draw [line width=1.5]  [dash pattern={on 7.5pt off 3pt}]  (58.67,210.88) -- (95.17,139.38) ;
\draw [line width=1.5]  [dash pattern={on 7.5pt off 3pt}]  (217.5,257.38) -- (276.83,135.75) ;
\draw [line width=1.5]  [dash pattern={on 1.69pt off 2.76pt}]  (147.67,169.38) -- (112.17,135.38) ;
\draw  [fill={rgb, 255:red, 255; green, 255; blue, 255 }  ,fill opacity=1 ] (328.88,153.01) -- (335.15,153.01) -- (335.15,159.28) -- (328.88,159.28) -- cycle ;

\draw (55,279.4) node [anchor=north] [inner sep=0.75pt]  [font=\tiny]  {$0$};
\draw (212.33,279.55) node [anchor=north] [inner sep=0.75pt]  [font=\tiny]  {$0$};
\draw (92.43,291.08) node [anchor=north west][inner sep=0.75pt]  [font=\normalsize]  {$A_{1}^{0}$};
\draw (252.43,291.01) node [anchor=north west][inner sep=0.75pt]  [font=\normalsize]  {$A_{2}^{0}$};
\draw (288.93,215.51) node [anchor=north west][inner sep=0.75pt]  [font=\small]  {$\gamma _{2}$};
\draw (108.17,234.53) node [anchor=north west][inner sep=0.75pt]  [font=\small]  {$\gamma _{1}$};
\draw (74.92,171.73) node [anchor=south east] [inner sep=0.75pt]  [font=\small]  {$\delta _{1}$};
\draw (253.92,173.23) node [anchor=south east] [inner sep=0.75pt]  [font=\small]  {$\delta _{2}$};

\end{tikzpicture}

\caption{Three different edges of $A^0$ (see Example~\ref{ex:edges}).}\label{fig:pair-example}
\end{figure} Let $\gamma:=(\gamma_1, \gamma_2) \prec A^0$, be a face. For any $f\in\C^A$, we
denote by $f_{\gamma_1}$ and $f_{\gamma_2}$ the restriction of $f_1$ and $f_2$ to those monomial
terms $c_{1, a} x^{a}$ for which  $a \in \gamma_1 \cap \Z^2$. That is,
\[
f_{\gamma_i} := \sum\nolimits_{a\in \supp f_i \cap \gamma_i} c_{i, a} ~z^{a},~i=1,2.
\] We also write $f_\gamma$ for the
pair of polynomials $f_{\gamma_1}, f_{\gamma_2}\in\C[z_1,z_2]$.

\begin{definition}\label{def:Newton_non-degenerate}
The pair of polynomials $f$ is said to be \emph{non-degenerate} at a face $\gamma\prec A^0$ if the restricted system $f_\gamma = 0$ has no solutions in $\TT$. We say that $f$ is \emph{Newton non-degenerate} if it is non-degenerate at every face of $\NP(f)$. 
\end{definition}
Now, consider the subset in $(\C^*)^4$ given by 
\[
G(\gamma,f):=\left\lbrace\left. (z,w)~\right|~(f-w)_{\gamma}(z)=0 \right\rbrace,
\] and the projection $\pi:(\C^*)^4\to\TT$, $(z,w)\mapsto w$. We use $\Sfg$ to denote the closure in $\C^2$ of $\pi(G(\gamma,f))$. The following result was shown in~\cite{EHT21}.

\begin{theorem}\label{thm:non-prop-main}
There is a dense subset $U_3\subset\C^A$ whose elements $f\in U_3$ represent polynomial maps $\CtC$ satisfying 
\[
\cS_f=\bigcup_{\gamma}\Sfg,
\] where $\gamma$ runs over all dicritical edges of $A^0$. 
\end{theorem}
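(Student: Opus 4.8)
The plan is to describe an arbitrary point $w\in\cS_f$ by an analytic arc escaping to infinity, read off from the leading exponent vector of that arc a face of $A^0$ together with a point of $G(\gamma,f)$, and then match the faces that genuinely arise in this way with the dicritical edges; the set $U_3$ will be the intersection of finitely many Zariski-open conditions, one per face of $A^0$.

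First I would apply the curve selection lemma (\cite{bochnak2013real}, \cite[Lemma 3.1]{jelonek2005quantitative}): $w\in\cS_f$ exactly when there is an analytic arc $z:\,]0,\varepsilon[\,\to\C^2$ with $\Vert z(t)\Vert\to\infty$ and $f(z(t))\to w$ as $t\to 0$. After imposing the dense condition $\NP(f_i)=A_i$ and, if necessary, a ramified reparametrisation $t\mapsto t^{1/N}$, I may assume $z(t)=(a_1 t^{\alpha_1}(1+o(1)),\,a_2 t^{\alpha_2}(1+o(1)))$ with $\alpha=(\alpha_1,\alpha_2)\in\Z^2$ primitive; arcs contained in a coordinate axis reduce to a one-variable discussion, so I may take $a_1,a_2\in\C^*$, and then $\Vert z(t)\Vert\to\infty$ forces $\min(\alpha_1,\alpha_2)<0$. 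Substituting the arc into $f_i-w_i$ and letting $\gamma_i$ be the face of $\NP(f_i-w_i)$ on which $\langle\alpha,\cdot\rangle$ is minimised, the leading term of $(f_i-w_i)(z(t))$ is $(f_i-w_i)_{\gamma_i}(a)\,t^{\beta_i}$ with $\beta_i=\min_{b}\langle\alpha,b\rangle\le\langle\alpha,\bm 0\rangle=0$, using $\bm 0\in\conv(\{\bm 0\}\cup A_i)$. Convergence $f_i(z(t))\to w_i$ then forces, coordinate by coordinate, either $\beta_i=0$ with $(f_i-w_i)_{\gamma_i}(a)=0$, or $\beta_i<0$ with a vanishing leading coefficient; in the latter case, replacing $\NP(f_i-w_i)$ by $\gamma_i$ and iterating, one descends until $\beta_i=0$. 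The outcome is a face $\gamma=(\gamma_1,\gamma_2)\prec A^0$ supported by $\alpha$ with $(f-w)_\gamma(a)=0$, i.e.\ $(a,w)\in G(\gamma,f)$, so $w\in\Sfg$; moreover the supporting line of whichever of $\gamma_1,\gamma_2$ still carries a constant term passes through $\bm 0$ (semi-origin), and $\min(\alpha_1,\alpha_2)<0$ makes $\gamma$ a dicritical edge. This yields $\cS_f\subset\bigcup_\gamma\Sfg$ for $f$ in a suitable dense set.

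For the opposite inclusion I would fix a dicritical edge $\gamma$ with primitive inner vector $\alpha$ (which has a negative coordinate) and a generic point $(a,w)\in G(\gamma,f)$. Since the Jacobian of $(f-w)_\gamma$ at $a$ is generically nonsingular along $G(\gamma,f)$, an implicit-function-theorem correction of the monomial arc $t\mapsto(a_1 t^{\alpha_1},a_2 t^{\alpha_2})$ produces an analytic arc along which $\Vert z(t)\Vert\to\infty$ and $f(z(t))\to w$, giving $w\in\cS_f$ and hence $\Sfg\subset\cS_f$. It then remains to discard the faces that are not dicritical edges: a dimension count on the incidence variety $G(\gamma,f)\subset\TT\times\C^2$ shows that its image under $\pi$ has dimension $\le 1$ only when $\dim(\gamma_1\oplus\gamma_2)=1$, while for edges that fail to be semi-origin the system $(f-w)_\gamma=0$ is, for generic $f$, either inconsistent in $\TT$ or confines $w$ to a proper subvariety not meeting $\cS_f$. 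Taking $U_3$ to be the intersection over all faces $\gamma\prec A^0$ of these Zariski-open conditions finishes the argument.

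The step I expect to be the main obstacle is the truncation/descent in the second paragraph together with its companion bookkeeping in the third: one must show that after passing to a proper subface the higher-order terms of the arc cannot revive a divergent contribution, and that among all faces of $A^0$ the only ones for which $\Sfg$ is genuinely one-dimensional and contained in $\cS_f$ are the dicritical edges — this is exactly where the classification in Definition~\ref{def:various-faces} (long versus short, semi-origin, origin, dicritical) must be deployed in full.
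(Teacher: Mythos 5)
Before anything else, note that the paper does not prove Theorem~\ref{thm:non-prop-main} here: the sentence immediately preceding it reads ``The following result was shown in~\cite{EHT21}''. There is therefore no in-paper argument to compare yours against, and your proposal has to be judged on its own.

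The curve-selection framework is the right one, but the argument as written has a real gap, and the ``descent'' does not repair it. Fix the face $\gamma_i\prec\NP(f_i-w_i)$ supported by the leading exponent $\alpha$ of the arc, and set $\beta_i=\min_{b\in\NP(f_i-w_i)}\langle\alpha,b\rangle\le 0$. The lowest-order coefficient of the Puiseux expansion of $(f_i-w_i)(z(t))$ is $(f_i-w_i)_{\gamma_i}(a)$, sitting at $t^{\beta_i}$. If this were nonzero, $(f_i-w_i)(z(t))$ would tend to a nonzero constant when $\beta_i=0$, or diverge when $\beta_i<0$; either way $f_i(z(t))\not\to w_i$. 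So $(f-w)_{\gamma}(a)=0$, i.e.\ $(a,w)\in G(\gamma,f)$, holds in \emph{both} cases and no descent is needed. The proposed descent is also incoherent on its face: replacing $\NP(f_i-w_i)$ by $\gamma_i$ leaves $\min_{b}\langle\alpha,b\rangle$ unchanged (the minimum is attained on $\gamma_i$), so the iteration can never reach $\beta_i=0$. What is genuinely missing is the justification that $\gamma$ is a \emph{dicritical} edge. You assert that ``the supporting line of whichever of $\gamma_1,\gamma_2$ still carries a constant term passes through $\bm 0$'', but you never show that one of them carries the constant term, i.e.\ that at least one $\beta_i=0$. When both $\beta_i<0$, both equations of $(f-w)_\gamma(a)=0$ reduce to $(f_i)_{\gamma_i}(a)=0$ and are independent of $w$; such a $\gamma$ is not semi-origin, hence not dicritical, and if $G(\gamma,f)\neq\emptyset$ then $\Sfg=\C^2$, which breaks the claimed equality. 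Closing this requires a Zariski-open condition built into $U_3$: for every edge $\gamma\prec A^0$ with both $\beta_i<0$, the parallel face polynomials $(f_i)_{\gamma_i}=z^{u_i}h_i(z^v)$ must have $h_1,h_2$ with no common root in $\C^*$. Your third paragraph has the same flaw: the relevant statement is that $G(\gamma,f)=\emptyset$ generically, not that $\pi(G(\gamma,f))$ is low-dimensional. Finally, the reverse inclusion via ``an implicit-function-theorem correction'' glosses over the case $\beta_i<0$: there the monomial arc makes $f_i(z(t))$ diverge at leading order, and cancelling the entire divergent tail of the expansion so that $f(z(t))\to w$ is a full Newton--Puiseux iteration, not a single application of the implicit function theorem.
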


\begin{example}\label{ex:non-prop}
Consider the polynomial map $f:\C^2_{x,y}\to\C_{a,b}^2$ from Example~\ref{ex:main}. The only two semi-origin faces of $A^0$ are $\gamma$ and $\delta$ appearing in Figure~\ref{fig:pair-example} (see e.g., Example~\ref{ex:edges}). We consider the pairs of polynomials
\begin{align*}
(f-w)_{\gamma}= &~\big(2~x^2y^2  -~3~x^4y^4 -~ a,~~ x^2y^2 -~11~x^5y^5 -~b\big) \\
(f-w)_{\delta}= &~\big( y^2~(-1 + x^2y^4),~~ - ~5~xy^2 +~x^2y^4 + ~7~x^3y^6-~b\big).
\end{align*} Then, we get $\cS_\delta(f)=\V\big((b-1)(b+11)\big)$. To compute $\cS_\gamma(f)$, we make the variable change $t:=xy$ to obtain its parametric equation $
\big(a(t),~b(t)\big) =(2t^2  -3t^4,~ t^2 -11t^5)$. Eliminating the variable $t$ gives the implicit equation for the polynomial $P$ from Example~\ref{ex:main}.
\end{example}

To prove the main result of this section, we need the following two Lemmas. The first one can be deduced by a straightforward computation. 

\begin{lemma}\label{lem:binom-non-degenerate}
If a polynomial $P$ has only two monomials, then $P$ is Newton non-degenerate.
\end{lemma}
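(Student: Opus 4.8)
The statement to prove is Lemma~\ref{lem:binom-non-degenerate}: a polynomial $P$ with exactly two monomials is Newton non-degenerate.

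\textbf{Proof proposal.} The plan is a direct computation based on the definition of Newton non-degeneracy (Definition~\ref{def:Newton_non-degenerate} / the face-nondegeneracy condition recalled in \S\ref{sub:faces-polytopes}). Write $P = c_1 z^{a} + c_2 z^{b}$ with $c_1, c_2 \in \C^*$ and $a \neq b$ in $\N^n$. The Newton polytope $\Pi = \NP(P)$ is the segment $[a,b]$, whose faces are: the two vertices $\{a\}$, $\{b\}$, and the whole segment $\Pi$ itself. We must check that $P$ is non-degenerate at each of these faces, i.e. that $\Vs(P_\delta, \partial P_\delta/\partial z_1, \ldots, \partial P_\delta/\partial z_n) = \emptyset$ for each face $\delta \prec \Pi$.

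First I would handle the two vertices. For $\delta = \{a\}$ we have $P_\delta = c_1 z^{a}$, a nonzero monomial, which never vanishes on the torus $\TTn$; hence $\Vs(P_\delta) = \emptyset$ already, and a fortiori the common zero locus with the partials is empty. The same applies to $\delta = \{b\}$. Next, for the full face $\delta = \Pi$, we have $P_\delta = P = c_1 z^a + c_2 z^b$. Here $P$ may well vanish on the torus (its zero set is $\{z^{a-b} = -c_2/c_1\}$, a nonempty subtorus translate), so we must show that the partials do not simultaneously vanish there. Computing, $\partial P/\partial z_k = c_1 a_k z^{a}/z_k + c_2 b_k z^{b}/z_k$ for each $k$ (interpreting $z^a/z_k$ multiplicatively on the torus). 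If $z \in \Vs(P, \partial_1 P, \ldots, \partial_n P)$, then from $P(z)=0$ we get $c_1 z^a = -c_2 z^b \neq 0$, and substituting into $\partial_k P(z) = 0$ gives $c_2 z^b(b_k - a_k)/z_k = 0$, forcing $a_k = b_k$ for every $k$, i.e. $a = b$, contradicting $a \neq b$. Hence $\Vs(P_\Pi, \partial P_\Pi/\partial z_1, \ldots, \partial P_\Pi/\partial z_n) = \emptyset$.

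Since $P$ is non-degenerate at every face of $\NP(P)$, it is Newton non-degenerate by definition. There is no real obstacle here — the only point requiring a line of care is the full-dimensional face, where one uses the relation $c_1 z^a = -c_2 z^b$ coming from $P(z) = 0$ to reduce the vanishing of the partials to the equations $(a_k - b_k) = 0$. For the write-up I would present the vertex cases in one sentence (a monomial is a unit on the torus) and spell out the segment case as above; the whole argument is a handful of lines, consistent with the paper's remark that the lemma "can be deduced by a straightforward computation."
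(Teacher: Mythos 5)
Your proof is correct and is exactly the ``straightforward computation'' the paper alludes to without spelling out. The vertex faces are trivial because a monomial is a unit on the torus, and for the full edge the relation $c_1 z^a = -c_2 z^b$ from $P(z)=0$ reduces $z_k\,\partial P/\partial z_k = 0$ to $c_2 (b_k - a_k) z^b = 0$, forcing $a=b$ — a contradiction. This is complete and matches the intended argument.
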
 

\begin{lemma}\label{lem:Newton_polytopes-invar2}
For any semi-origin edge $\gamma\prec A^0$, there exists a polytope $\SG\subset \R^2$, and a dense subset $U_4\subset\C^A$ in which each $f\in U_4$ satisfies $\SG = \NP(\Sfg)$.
\end{lemma}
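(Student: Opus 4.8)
The plan is to mirror the proof of Lemma~\ref{lem:Newton_polytopes-invar}: work with symbolic coefficients, eliminate the torus coordinates, and read off $\SG$ as the convex hull of the \emph{generic support} of the resulting eliminant. Concretely, I would write each $(f_1,f_2)\in\C^A$ as the specialization of a pair $F_1\in\Z[z_1^{\pm},z_2^{\pm},\underline{c}]$, $F_2\in\Z[z_1^{\pm},z_2^{\pm},\underline{d}]$ with indeterminate coefficients $\underline{c}=(c_{1,a})_{a\in A_1\cap\N^2}$ and $\underline{d}=(c_{2,a})_{a\in A_2\cap\N^2}$. Since $\gamma=(\gamma_1,\gamma_2)$ is a \emph{fixed} face of $A^0$, the truncations $(F-w)_{\gamma_i}=\sum_{a\in\gamma_i\cap A_i\cap\N^2}c_{i,a}z^a$ — to which one adds $-w_i$ exactly when $\bm{0}\in\gamma_i$ — are well defined over $\Z[\underline{c},\underline{d}]$ and specialize to $(f-w)_{\gamma_i}$ with no genericity assumption on $f$. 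I would then form the ideal $I:=\langle (F-w)_{\gamma_1},(F-w)_{\gamma_2}\rangle$ in the Laurent ring $\Z[z_1^{\pm},z_2^{\pm},w_1^{\pm},w_2^{\pm},\underline{c},\underline{d}]$, eliminate $z_1,z_2$ by passing to $I\cap\Z[w_1^{\pm},w_2^{\pm},\underline{c},\underline{d}]$, and extract from its divisorial part a single Laurent polynomial $\mathsf{N}_F=\sum_t\chi_t(\underline{c},\underline{d})\,w^t$ (a hypersurface in a torus being principal), with $\chi_t\in\Z[\underline{c},\underline{d}]$ of finite support; after a monomial shift in $w$ this support may be taken in $\N^2$. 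By construction $\mathsf{N}_F$ specializes, at the coefficients of any given $f$, to a monomial multiple of the defining polynomial of $\Sfg$.

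Next I would set $T:=\{t:\chi_t\not\equiv0\text{ in }\Z[\underline{c},\underline{d}]\}$, define $\SG:=\conv(T)$, and put
\[
U_4:=\C^A\setminus\V\bigl(\textstyle\prod_{t\in T}\chi_t\bigr).
\]
For $f\in U_4$ the specialized polynomial $\mathsf{N}_f$ has support exactly $T$, hence $\NP(\Sfg)=\conv(T)=\SG$, as wanted. The set $U_4$ is dense because each $\chi_t$ with $t\in T$ is a nonzero element of the domain $\Z[\underline{c},\underline{d}]$, so its nonvanishing locus is a nonempty Zariski-open subset of the irreducible variety $\C^A$, and a finite intersection of such subsets remains dense.

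The main obstacle is making the elimination step rigorous \emph{inside the torus}: $G(\gamma,f)$ need not be irreducible or purely one-dimensional, and $\pi$ restricted to it need not be finite, so one must verify that what is recovered is precisely the codimension-one part of $\overline{\pi(G(\gamma,f))}$, that this part is cut out by a single Laurent polynomial, and that its generic support $T$ is stable under specialization of $\underline{c},\underline{d}$ to a dense set of values. This is exactly the assertion that forming the saturated elimination ideal $\bigl(\langle(F-w)_{\gamma_1},(F-w)_{\gamma_2}\rangle:(z_1z_2)^\infty\bigr)\cap\Z[w_1,w_2,\underline{c},\underline{d}]$ commutes with generic specialization of the parameters, which follows from generic flatness over the domain $\Z[\underline{c},\underline{d}]$. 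Should the scheme-theoretic argument become cumbersome, I would instead use the explicit description of $\Sfg$ along $\gamma$ supplied by~\cite{EHT21} and illustrated in Example~\ref{ex:non-prop}: writing $f_{\gamma_i}(z)=z^{u_i}h_i(z^v)$ with $v$ a primitive direction vector of the edge $\gamma_1\oplus\gamma_2$, the substitution $t:=z^v$ presents $\Sfg$ either as the image closure of a one-parameter rational curve $t\mapsto(w_1(t),w_2(t))$ with each $w_i(t)$ a monomial in $t$ times a power of $h_i$ (the long-edge case), or, when $\gamma$ is short so that some $h_i$ is a single monomial, as a finite union of coordinate lines — here Lemma~\ref{lem:binom-non-degenerate} supplies the needed nondegeneracy of that monomial truncation. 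In either case the implicit equation is a resultant in $t$ whose coefficients lie in $\Z[\underline{c},\underline{d}]$, and $\SG$ is again the convex hull of the exponents whose coefficients do not vanish identically.
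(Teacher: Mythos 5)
Your proposal matches the paper's proof: both pass to symbolic coefficients $\underline{c},\underline{d}$, form the elimination ideal $\langle (F_1-w_1)_{\gamma_1},(F_2-w_2)_{\gamma_2}\rangle\cap\Z[w_1,w_2,\underline{c},\underline{d}]$, take $\SG$ to be the convex hull of the exponents of the generator whose coefficients $\phi_\theta\in\Z[\underline{c},\underline{d}]$ are not identically zero, and define $U_4$ as the complement of $\V(\prod\phi_\theta)$. The extra care you take in justifying that the saturated elimination commutes with generic specialization (and the fallback via the explicit parametrization from~\cite{EHT21}) fills in details the paper leaves implicit, but the argument is the same.
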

\begin{proof}
Let $\gamma\prec A^0$ be a semi-origin edge. Theorem~\ref{thm:non-prop-main} shows that $\cS_f$ is a curve whose elimination ideal 
\[
\langle (f_1 - w_1)_{\gamma_1} ,~(f_2 - w_2)_{\gamma_2}\rangle\cap \Z[w_1,w_2]
\] is generated by a non-zero polynomial $\sum\phi_\theta ~w^{\theta}$ supported on a finite subset $\Theta$, and $\phi_\theta\in\Z[\underline{c},\underline{d}]$. Here, we take $\Gamma_\gamma:=\conv(\Theta)$ and $U_4:=\C^A\setminus\V(\prod\phi_\theta)$.
\end{proof}
The polytope $\Gamma_{\!\delta}$ from Example~\ref{ex:non-prop} is a segment, and $\Gamma_{\!\gamma}$ is illustrated in Figure~\ref{fig:JJJAD}.

\begin{theorem}\label{thm:non-properness-long}
Let $A\in\ccCgeq$. Then, there is a dense subset $U_s\subset\C^A$ containing all maps $f\in U_s$, such that for each long semi-origin edge $\gamma\prec A^0$, the curve $\Sfg$ is Newton non-degenerate whose singularities in $\C^2\setminus\{\bm{0}\}$ are simple nodes. Furthermore, for any two such edges $\gamma,\gamma'\prec A^0$, the pair $(\Sfg,\cS_{\gamma'}(f))$ is Newton non-degenerate, whose common locus in $\C^2\setminus\{\bm{0}\}$ is formed by complete intersections.
\end{theorem}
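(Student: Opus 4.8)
The plan is to follow the template already used in Lemma~\ref{lem:C_Newton-non} and Theorem~\ref{thm:critical-smooth}: every way in which the stated conclusions can fail will be confined to a proper Zariski-closed subset of a parameter space onto which $\C^A$ maps dominantly, and since $A^0$ has only finitely many faces it suffices to produce, for each long semi-origin edge $\gamma\prec A^0$ and for each unordered pair of such edges, a dense subset of $\C^A$ on which the corresponding assertion holds; their common intersection, together with the sets $U_4$ of Lemma~\ref{lem:Newton_polytopes-invar2}, will be $U_s$. First I would fix such a $\gamma=(\gamma_1,\gamma_2)$, let $v$ be the primitive integer vector directing $\gamma_1\oplus\gamma_2$ and $\alpha$ the associated inner normal, and observe that after possibly interchanging the indices the supporting line of $\gamma_2$ passes through $\bm{0}$, so that $f_{\gamma_2}=h_2(z^v)$ and $f_{\gamma_1}=z^{r}h_1(z^v)$ for univariate polynomials $h_1,h_2$ and some $r\in\Z^2$. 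Running the curve-selection argument from the proof of Theorem~\ref{thm:non-prop-main} (equivalently, reading off the description in~\cite{EHT21}) one sees that $\Sfg$ is the closure in $\C^2$ of the image of a rational map $\C^*\to\TT$ whose two components are Laurent monomials in $z^v$ times powers of $h_1(z^v)$ and $h_2(z^v)$; in particular $\Sfg$ is a rational curve, $\NP(\Sfg)=\Gamma_\gamma$ by Lemma~\ref{lem:Newton_polytopes-invar2}, the coefficients of its defining polynomial $P_\gamma$ are polynomials in the coefficients of $h_1,h_2$ --- i.e.\ in the coefficients of $f$ indexed by $\gamma\cap\Z^2$ --- and the resulting map $f\mapsto(h_1,h_2)$ from $\C^A$ onto the affine space of such pairs is surjective. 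In the case where $\gamma$ is lower- or upper-dicritical but not an origin edge, $\Sfg$ specializes to a union of lines parallel to a coordinate axis, the constant coordinates being the nonzero roots of one of the $h_i$; for generic $f$ these roots are simple, so there $\Sfg$ is Newton non-degenerate, has no singular point in $\C^2\setminus\{\bm{0}\}$ whatsoever, and meets every $\cS_{\gamma'}(f)$ transversally off $\bm{0}$. I may therefore assume for the remainder that $\gamma$ is an origin edge, so $f_{\gamma_1},f_{\gamma_2}$ are themselves polynomials in $z^v$ and $\Sfg$ is irreducible.

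For the Newton non-degeneracy of an individual $\Sfg$ I would treat each edge $\tau\prec\Gamma_\gamma$ in turn. Write $(P_\gamma)_\tau=z^{a}g_\tau(z^{w_\tau})$ with $w_\tau$ primitive along $\tau$ and $g_\tau$ univariate, $g_\tau(0)\neq0$ (the last point because the vertices of $\Gamma_\gamma$ lie in $\supp P_\gamma$). If $\tau$ has exactly two lattice points, non-degeneracy at $\tau$ is automatic by Lemma~\ref{lem:binom-non-degenerate}; otherwise it is equivalent to $g_\tau$ having no repeated root, i.e.\ to the coefficient vector of $g_\tau$ lying off the discriminant hypersurface. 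Since that coefficient vector is a nonconstant polynomial image of $(h_1,h_2)$ and $f\mapsto(h_1,h_2)$ is surjective, the set of $f$ failing this is contained in a proper Zariski-closed subset, exactly as in Lemma~\ref{lem:C_Newton-non}; intersecting over all $\tau\prec\Gamma_\gamma$ and all long semi-origin $\gamma$ gives a dense open on which every $\Sfg$ is Newton non-degenerate, so that (by Remark~\ref{rem:singular-orbit-Newton-non}) the compactification $\overline{\Sfg}\subset X_{\Gamma_\gamma}$ is smooth along its boundary.

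The substantive step is to show that the singularities of $\Sfg$ in $\C^2\setminus\{\bm{0}\}$ are simple nodes. Writing $\nu=\nu_{f,\gamma}\colon\C^*\to\C^2$ for the parametrization above, for generic $f$ the polynomials $h_1,h_2$ have no common root in $\C^*$ and $\nu$ is generically injective, so the only branch of $\Sfg$ through $\bm{0}$ is the one coming from $t\to0$; after deleting a punctured neighbourhood of $0$ from the source, I want the remaining parametrized curve to have only transverse double points as multiple points. I would then argue as in the proof of Theorem~\ref{thm:critical-smooth}: the relevant multijet evaluation map, sending $(f,t_1,\dots,t_k)$ with the $t_i\in\C^*$ pairwise distinct to the tuple $\bigl(\nu_{f,\gamma}(t_i),(d\nu_{f,\gamma})_{t_i}\bigr)_{i=1}^{k}$ of $1$-jets, is a submersion for $k\le2$ provided $\gamma$ is long enough that $f_{\gamma_1},f_{\gamma_2}$ carry sufficiently many free coefficients; the results of~\cite{golubitsky2012stable} used in Theorem~\ref{thm:critical-smooth} then show that for a dense set of $f$, at every point of $\C^2\setminus\{\bm{0}\}$ the images under $\nu$ of the tangent lines to $\Sfg$ at the preimages of that point are in general position inside the two-dimensional tangent plane, whence there are at most two preimages and, when there are two, the point is a simple node. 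Equivalently, one checks that each of ``$\operatorname{Res}_t\!\bigl(h_1'(t),h_2'(t)\bigr)$ has a root off the $t\to0$ branch'' (a cusp or worse), ``three distinct parameters share a common image off $\bm{0}$'', and ``two parameters share a common image off $\bm{0}$ with parallel velocities'' (a tacnode) is a proper Zariski-closed condition on $(h_1,h_2)$, and pulls it back. The case in which both $f_{\gamma_1}$ and $f_{\gamma_2}$ are binomials is settled by a direct computation, $\Sfg$ being then a monomial-type curve whose only possible singular point in $\C^2$ is $\bm{0}$. Intersecting with the previous subsets and over all origin edges yields the single-edge part of the theorem.

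Finally, for a pair $\gamma\neq\gamma'$ of long semi-origin edges: if either is non-origin dicritical the first paragraph already gives Newton non-degeneracy of the pair and transversality off $\bm{0}$, so assume both are origin edges. Newton non-degeneracy of the pair $(P_\gamma,P_{\gamma'})$ means that for every face $(\tau_1,\tau_2)$ of $(\Gamma_\gamma,\Gamma_{\gamma'})$ the truncated system $(P_\gamma)_{\tau_1}=(P_{\gamma'})_{\tau_2}=0$ has no solution in $\TT$, and each such vanishing is a proper Zariski-closed condition on the coefficients of $P_\gamma,P_{\gamma'}$ --- hence on the data of $f$ along $\gamma$ and along $\gamma'$; pulling these back and intersecting makes the pair Newton non-degenerate, so $\Sfg\cap\cS_{\gamma'}(f)\cap\TT$ is a transverse complete intersection of $\MV(\Gamma_\gamma,\Gamma_{\gamma'})$ reduced points. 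The only points of $\Sfg\cap\cS_{\gamma'}(f)$ in $\C^2\setminus\{\bm{0}\}$ not covered by this lie on a coordinate axis away from $\bm{0}$, and transversality there follows from the same dominance argument applied to the axis-truncations of $P_\gamma$ and $P_{\gamma'}$. Taking $U_s$ to be the intersection of all the dense opens produced above completes the plan. I expect the main obstacle to be exactly the node-only statement for $\Sfg$ and its pairwise version: $\Sfg$ is \emph{not} a fully generic rational plane curve of its degree --- its Newton polytope is prescribed and it is forced through $\bm{0}$ with a tangent cone dictated by $\gamma$ --- so one cannot quote ``a generic rational curve has only nodes'' verbatim and must instead check that the bad loci remain proper inside this constrained family of parametrizations, all the while cleanly separating the (permitted, possibly intricate) singularity at $\bm{0}$ from those in $\C^2\setminus\{\bm{0}\}$; excising the $t\to0$ branch before applying multijet transversality is the device I would use. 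The dominance statements and the Newton-non-degeneracy reductions are routine extensions of Lemma~\ref{lem:C_Newton-non}.
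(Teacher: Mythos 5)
Your overall architecture matches the paper's: split on whether $\gamma$ is an origin edge, reduce the non-origin case to a union of parallel lines, parametrize $\Sfg$ by $(h_1(x),h_2(x))$ in the origin case, verify Newton non-degeneracy edge by edge (the paper enumerates the four possible edges $\sigma_\pm,\sigma_\rightarrow,\sigma_\uparrow$ of $\Gamma_\gamma$ explicitly and applies Lemma~\ref{lem:binom-non-degenerate} to the diagonal ones, which is what your discriminant-per-edge discussion amounts to), and handle pairs $\gamma,\gamma'$ by making the coefficient data along the two edges independent. On these steps there is no significant divergence.

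The nodes-only step is where your route differs and where there is a genuine gap. You propose to invoke multijet transversality \`a la Theorem~\ref{thm:critical-smooth}, requiring the 2-fold 1-jet evaluation map on $\C^{\tilde\gamma}\times(\C^*)^2$ to be a submersion. The target there is $8$-dimensional, so the submersion forces $\dim\C^{\tilde\gamma}\geq 6$; but the data $\tilde\gamma=(\gamma_1\cap A_1,\gamma_2\cap A_2)$ may carry as few as $4$ or $5$ coefficients (e.g.\ a binomial $h_1$ and a trinomial $h_2$), and nothing in the hypothesis that $A$ is conical or that $\gamma$ is ``long'' guarantees more. Your escape clause, ``the case in which both $f_{\gamma_1}$ and $f_{\gamma_2}$ are binomials is settled directly,'' does not close the gap: the intermediate regime (one binomial, one trinomial; both trinomials) is left uncovered, and the submersion really does fail there. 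The ``equivalently, one checks that each condition is a proper Zariski-closed condition'' is not in fact equivalent to the submersion statement — it is a different (and correct) claim, but it is precisely the content that needs to be proved, and you do not supply the dimension count for the triple-point or tacnode loci. The paper avoids all of this by applying Sard's theorem directly to the projection $\pi\colon Y\setminus\mathscr{D}\to\C^{\tilde\gamma}$, where $Y=\V(h_1(x)-h_1(y),\,h_2(x)-h_2(y))\subset\C^{\tilde\gamma}\times\TT$ and $\mathscr{D}$ is the diagonal: $Y\setminus\mathscr{D}$ is smooth, so the critical values of $\pi$ form a proper closed subset of $\C^{\tilde\gamma}$ regardless of $\dim\C^{\tilde\gamma}$, and off that subset every off-diagonal solution of the system is non-degenerate, i.e.\ a transverse crossing. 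The cuspidal singularities are ruled out separately by observing that $\Vs(h_1',h_2')\subset\C^{\tilde\gamma}\times\C^*$ has codimension $2$, so its projection to $\C^{\tilde\gamma}$ is proper. If you want to keep a jet-theoretic formulation you would have to replace the blanket submersion claim by a Sard-type argument adapted to the constrained parameter space, or verify the needed transversalities by hand in each of the small cases; as written, the multijet submersion route does not go through.

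Two minor remarks. First, the device of ``deleting a punctured neighbourhood of $0$'' from the source $\C^*$ is an analytic rather than algebraic excision and is awkward to make precise; the paper's device of removing the diagonal $V(x-y)$ in the incidence variety is cleaner and is the one you should use. Second, for the pair $(\gamma,\gamma')$ you should say explicitly, as the paper does, that the coefficients of $f_\gamma$ and $f_{\gamma'}$ can be chosen independently because $\gamma\cap A$ and $\gamma'\cap A$ are disjoint, before invoking Bertini for transversality of $\Sfg\cap\cS_{\gamma'}(f)$ off the origin.
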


\begin{proof}
We take $U_s$ to be the intersection of subsets $U_\gamma$ consisting of maps satisfying the desired claims for each long semi-origin $\gamma\prec A^0$. Let $v\in\N^2$ be the primitive vector with positive integers directing $\gamma_1 \oplus \gamma_2$, for some semi-origin edge $\gamma:=(\gamma_1,\gamma_2)$.  Then, each of $f_{\gamma_1}(z)$ and $f_{\gamma_2}(z)$ can be expressed as univariate complex polynomials $h_1(x)$, and $h_2(x)$ respectively, where $x:=z^v$ (see e.g., Example~\ref{ex:non-prop}).

Assume first that $\gamma$ is not origin. Without loss of generality, we may suppose that $\bm{0}\in\gamma_1$. Then, $\Sfg$ is a union $\V(P)$ of vertical lines in $\C_{w_1,w_2}^2$, where
\begin{equation}\label{eq:vert-lines}
P:=\prod_{\substack{h_2(x)=0 \\ 
x\neq 0}} (w_1 -h_1(x)).
\end{equation} Consider the set $V_\gamma\subset\C^{\gamma}$ of all $f_\gamma$ such that $h_2$ does not have double roots, and $h_1(x)\neq h_1(y)$ for any distinct $x,y\in\V_*(h_2)$. Then, $V_\gamma$ is a Zariski open and $\Sfg$ is Newton non-degenerate. We finish this case by taking $U_\gamma$ to be the set of all $f$ for which $f_\gamma\in V_\gamma$. 

Assume now that $\gamma$ is an origin long edge. We thus get a parametrized expression 
\begin{equation}\label{eq:param-curve}
\Sfg=\overline{\left\{\left(h_1(x),~h_2(x)\right)~|~x\in\C^* \right\}}.
\end{equation} In what follows, we describe the nodes of $\Sfg$ using its parametrization~\eqref{eq:param-curve}.   

Let $\tilde{\gamma}$ denote the pair $(\gamma_1\cap A_1,~\gamma_2\cap A_2)$. First, note that the set of pairs $h:=(h_1,~h_2)\in\C^{\tilde{\gamma}}$ for which $\Sfg$ has non-nodal singularities (i.e. not self-intersections of smooth branches) forms a Zariski closed proper subset $Z$. Indeed, the latter is the image of the co-dimension two subset 
 \[
 \Vs(h'_1,~h'_2)\subset\C^{\tilde{\gamma}}\times\C^*
 \] under the projection $\C^{\tilde{\gamma}}\times\C^*\to\C^{\tilde{\gamma}}$. Therefore, if $V_s$ is defined to be the Zariski open
 \[
 \C^{A\setminus\tilde{\gamma}}\times\big(\C^{\tilde{\gamma}}\setminus Z\big), 
 \] then for any $f\in V_s$, all points in $\Sing(\Sfg \setminus\{\bm{0}\})$ are nodes. Next, we construct an open dense subset $W_s$ in which for any $f\in U_s:=W_s\cap V_s$, all the above nodes are simple. If $\nu\in \C^2\setminus \{\bm{0}\}$ is a node of $\Sfg$, there exists distinct $x,y\in\C^*$ satisfying 
\begin{alignat*}{3} 
  \nu_1= & h_1(x) = & h_1(y),\\
  \nu_2= & h_2(x) = & h_2(y).
\end{alignat*} Hence, if $\nu$ is a simple node, it can be identified with a set of two distinct values $\{a,b\}$, where $(a,b)\in\TT$ (thus, also $(b,a)$) is a non-degenerate solution to 
\begin{equation}\label{eq:node}
\left\{ \begin{aligned} 
  h_1(x) - h_1(y) & = 0,\\
  h_2(x) - h_2(y) & = 0.
\end{aligned} \right.
\end{equation} The following claim shows that the singularities of $\Sfg$ in $\C^2\setminus\{\bm{0}\}$ are simple nodes.

Then, the first part of Theorem~\ref{thm:non-properness-long} follows from the following statement. 
\begin{claim}\label{clm:nodes-simple}
There is a Zariski open subset $W_s\subset \C^{\tilde{\gamma}}$ of pairs of polynomials $(h_1,h_2)\in W_s$ for which the solutions to~\eqref{eq:node} in $\TT\setminus\V(x-y)$ are non-degenerate. 
\end{claim}

\begin{proof}
Consider the projection 
\begin{align*}
\pi:\C^{\tilde{\gamma}}\times\TT&\longrightarrow \C^{\tilde{\gamma}}\\
(h_1,h_2,~x,y)& \longmapsto (h_1,h_2),
\end{align*} together with the set $Y:=\V\big(h_1(x) - h_1(y),~h_2(x) - h_2(y)\big)$. Then, from the definitions we get the identity
\[
\pi\big(\Crit \pi_{|_Y}\setminus\mathscr{D} \big)=\left\{(h_1,h_2)~|~\text{\eqref{eq:node} has a degenerate solution in $\TT\setminus\mathscr{D}$} \right\},
\] where $\mathscr{D}:=\V(x-y)$. Note that one can choose $(h_1,h_2)\in\C^{\tilde{\gamma}}$ so that for each $i=1,2$, if $x^k - y^k$ is a multiple of $h_i$, then $k=1$. Therefore, we get the identity
\[
\mathscr{D}=\left\lbrace (x,y)\in\TT~\text{satisfying~\eqref{eq:node} for all $(h_1,h_2)\in\C^{\tilde{\gamma}}$}\right\rbrace.
\] Finally, since $Y\setminus\mathscr{D}$ is smooth, Sard's Theorem implies that $
W_s:=\C^{\tilde{\gamma}}\setminus\pi\big(\Crit \pi_{|_{Y\setminus\mathscr{D}}} \big)$ is dense in $\C^{\tilde{\gamma}}$.
\end{proof} Now, we show that $\Sfg$ is Newton non-degenerate for all such $f\in U_s$. We first describe the set of edges of $\SG$. The implicit form of~\eqref{eq:param-curve} is given by a polynomial $P\in\C[w_1,w_2]$ generating the ideal
\[
\langle h_1(x)-w_1,~h_2(x) - w_2\rangle\cap\C[w_1,w_2].
\] Let $\{w^k\}_{k\in \N}\subset \Sfg\cap\TT$ be any family of points satisfying $ w^k\underset{k\rightarrow\infty}{\longrightarrow}\bm{0}$, or $\Vert w^k\Vert\underset{k\rightarrow\infty}{\longrightarrow}\infty$. By the curve-selection Lemma, we can approximate $\{w^k\}_{k\in \N}$ using the Puiseux series $\varphi:]0,1[\longrightarrow\TT$ with complex coefficients:
	\begin{align*}
	\varphi(t) & = (a_1t^{\alpha_1} + b_1t^{\beta_1\geq \alpha_1}+ \cdots,~a_2t^{\alpha_2} + b_2t^{\beta_2\geq \alpha_1}+ \cdots ).
	\end{align*} Let $(c_{m_1}x^{m_1},~d_{m_2}x^{m_2})$ and $(c_{n_1}x^{n_1},~d_{n_2}x^{n_2})$ be the pairs of terms in $(h_1,~h_2)$ of the lowest and highest degrees respectively. Since $\varphi(]0,1[)\subset \Sfg$, if $\varphi(t)\underset{t\rightarrow 0 }{\longrightarrow}\bm{0}$, we get 
	\[
	c_{m_1}(a_1t^{\alpha_1})^{m_1} = d_{m_2}(a_2t^{\alpha_2})^{m_2},\] for any small enough $t$. By the Newton-Puiseux theorem applied to $P$, we get a solution $( a_1,~a_2)$ to $P_{\sigma_+}=0$ for some $\sigma_+\prec\SG$. Since $(m_1,m_2)$ uniquely determines $(\alpha_1,\alpha_2)$ up to scaling, the edge $\sigma_+$ is the only one whose inner normal vector has positive coordinates. Analogously, if $\varphi_1(t)\underset{t\rightarrow 0 }{\longrightarrow}\infty$ or $\varphi_2(t)\underset{t\rightarrow 0 }{\longrightarrow}\infty$, the equality $c_{n_1}(a_1t^{\alpha_1})^{n_1} = d_{n_2}(a_2t^{\alpha_2})^{n_2}$ holds. Similarly, the point $( a_1,~a_2)$ is a solution to $Q_{\sigma_-}=0$, where $\sigma_-$ is the only edge of $\SG$ whose inner normal vector has a negative coordinate. We conclude that $\SG$ can have at most four edges $\sigma_-,\sigma_+,\sigma_{\rightarrow},\sigma_{\uparrow}\prec\SG$, where (In Figure~\ref{fig:JJJAD}, for example, we have $m_1=m_2 = 2$, $n_1=4$, and $n_2=5$)
\begin{equation}\label{eq:polytope-non-prop}
\begin{matrix}
\sigma_{\uparrow}:= & \conv\{(m_2,0),~(n_2,0)\}&  &\sigma_{\rightarrow}:= & \conv\{(0,m_1),~(0,n_1)\}\\ 
\sigma_-:= & \conv\{(n_2,0),~(0,n_1)\} & & \sigma_+:= & \conv\{(m_2,0),~(0,m_1)\}
\end{matrix}
\end{equation}	

Note that $P$ is non-degenerate at $\sigma_{\uparrow}$ and at $\sigma_{\rightarrow}$ since $h_1$ and $h_2$ are chosen so that each doesn't have double roots. As for the remaining two edges, the description above implies that $Q_{\sigma_-}$ and $Q_{\sigma_+}$ are written as $c_{n_1}(w_1)^{n_1} - d_{n_2}(w_2)^{n_2} $ and $c_{m_1}(w_1)^{m_1} - d_{m_2}(w_2)^{m_2}$. Then, the proof follows from Lemma~\ref{lem:binom-non-degenerate}.

Lastly, we show that for any long semi-origin faces $\gamma,\gamma'\prec A^0$, and any $f\in U_\gamma\cap U_{\gamma'}$, the curve $\Sfg$ intersects $\cS_{\gamma'}(f)$ transversally outsice $\{\bm{0}\}$, and $(\Sg,\cS_{\gamma'})$ is a Newton non-degenerate pair. The latter statement is straightforward since the dense subsets $U_\gamma$ and $U_{\gamma'}$ can be chosen so that $f_{\gamma}$ and $f_{\gamma'}$ do not share monomial terms. As for the former statement, using Bertini Theorem as in the proof of Claim~\ref{clm:nodes-simple}, the above choice for $U_\gamma$ and $U_{\gamma'}$ can be made to also guarantee transversality of $\Sg\cap\cS_{\gamma'}$ outside $\{\bm{0}\}$.
\end{proof}
\section{Proof of Theorem~\ref{thm:discriminant}}\label{sec:proof_of_theorem}
We start by presenting three technical results that take advantage of Newton non-degeneracy. These are particular cases of classical results of Kouchnirenko~\cite{Kou76}, Bernstein~\cite{Ber75}, and Khovanskii~\cite{khovanskii1978newton} respectively.
\begin{theorem}[Kouchnirenko]\label{thm:Kouchnirenko}
For any convenient bivariate complex polynomial $P$ it holds $\mu_0(P)\geq \cN(P)$, and equality holds if $P$ is Newton non-degenerate.
\end{theorem}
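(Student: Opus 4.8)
The plan is to prove the two assertions by separate arguments: the equality for Newton non-degenerate $P$ via an explicit toric resolution of the plane curve germ cut out by $P$ at the origin, and the inequality in general by reducing to that equality case through conservation of the Milnor number. First, a word on the easy parts. Since $P$ is convenient, $\NP(P)$ has vertices $(a,0)$ and $(0,b)$ with $a,b\geq 1$, so $z_1^a$ and $z_2^b$ occur in $P$; it is classical that a convenient polynomial has an isolated critical point at $\bm 0$, whence $\mu_0(P)<\infty$, and if $\cN(P)=0$ then $\Sigma_0=\emptyset$ and the germ of $P$ at $\bm 0$ is biholomorphic to $(x,y)\mapsto x^n+y$ (as recalled before Lemma~\ref{lem:Milnor=0}), so $\mu_0(P)=0=\cN(P)$ and both statements hold trivially. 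I therefore assume $\cN(P)>0$ from now on.

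For the equality when $P$ is Newton non-degenerate, I would argue by toric modification. Set $\Gamma:=\NP(P)$ and $\Gamma_+:=\Gamma\oplus\Rtg$, and choose a smooth refinement $\Delta$ of the normal fan of $\Gamma_+$, adding new rays $\rho_1,\dots,\rho_N$ only in the interior of $\Rtg$. This produces a smooth toric surface $X_\Delta$ with a proper birational toric morphism $\pi\colon X_\Delta\to\C^2$ which is an isomorphism over $\C^2\setminus\{\bm 0\}$ and contracts exceptional curves $E_1,\dots,E_N$ (one per $\rho_i$) onto $\bm 0$; along $E_i$ the divisor $\pi^{*}P$ vanishes to order $m_i:=\min\{\langle\rho_i,u\rangle:u\in\Gamma\}$. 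The hypothesis that $P$ is non-degenerate at every face of $\Gamma$ forces, chart by chart, the strict transform $\widetilde C$ of $\{P=0\}$ to be smooth, transverse to the exceptional locus, and disjoint from the torus-fixed points of $X_\Delta$, so $\pi$ restricts to an embedded resolution of the germ, with $\widetilde C$ meeting $E_i$ in exactly $\ell_i$ points, where $\ell_i$ is the lattice length of the edge of $\Gamma$ dual to $\rho_i$ (and $\ell_i=0$ when $\rho_i$ is dual to a vertex). The Milnor number is then read off from the resolution by a standard formula for plane curve germs --- either $\mu_0=2\delta_{\bm 0}-r_{\bm 0}+1$ together with additivity of the $\delta$-invariant along the blow-up tower encoded by $\Delta$, or A'Campo's formula in terms of the $m_i$ and the Euler characteristics of $E_i\setminus\bigcup_{j\neq i}E_j$ --- and, after simplification, the local contributions reorganize into twice the total area of the lattice triangles spanned by $\bm 0$ and the edges of $\Gamma$, minus $a+b$, plus $1$; since those triangles tile $\Sigma_0$, this equals $2\Vol(\Sigma_0)-a-b+1=\cN(\Gamma)$. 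An alternative worth keeping in reserve is to compute $\mu_0(P)=\dim_\C\mathscr{O}_{\bm 0}/\langle\partial P/\partial z_1,\partial P/\partial z_2\rangle$ directly as a local intersection number of the two polar curves, extracting it from a global count on a toric compactification via Bernstein's theorem and subtracting the contributions at infinity and away from $\bm 0$; non-degeneracy of $P$ is precisely what makes these auxiliary intersections transverse and combinatorially computable.

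For the inequality in the general convenient case, I would deform within a fixed Newton polygon. Fix $R$ with $\supp R\subset\NP(P)\cap\N^2$ and generic coefficients; then $P_t:=P+tR$ is Newton non-degenerate for all but finitely many $t\in\C$, and $\NP(P_t)=\NP(P)$ for $t$ near $0$ because the vertices of $\NP(P)$ already occur in $P$. Each $P_t$ is convenient, hence has an isolated critical point at $\bm 0$, so by upper semicontinuity of the Milnor number in this one-parameter family (conservation of the Milnor number) we get $\mu_0(P)=\mu_0(P_0)\geq\mu_0(P_t)$ for small $t\neq 0$; combined with the equality case, $\mu_0(P)\geq\mu_0(P_t)=\cN(P_t)=\cN(P)$, as desired.

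The hard part is the equality case: arranging the toric modification so that Newton non-degeneracy genuinely yields an embedded resolution with transverse intersections --- the charts meeting the coordinate axes need care --- and then carrying out the numerical bookkeeping so that the local resolution data collapse to exactly $2\Vol(\Sigma_0)-a-b+1$, which at bottom is the combinatorial fact that the relevant lattice triangles tile $\Sigma_0$. The finiteness statement, the semicontinuity step, and the reduction of the inequality to the equality are all routine.
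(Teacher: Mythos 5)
The paper does not prove Theorem~\ref{thm:Kouchnirenko}; it is explicitly imported as a ``particular case of a classical result of Kouchnirenko''~\cite{Kou76}, so there is no in-paper proof to compare against. What you have sketched is the standard toric-resolution proof (Varchenko/Oka style) for the equality case, together with a deformation argument for the general inequality. As a route to Kouchnirenko's theorem this is sound in outline, but it is only an outline: you openly defer the genuinely hard step, namely showing that the embedded-resolution data collapse numerically to $2\Vol(\Sigma_0)-a-b+1$, and as written that step has no content.

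One assertion you make is actually false and should be removed or replaced. You claim that ``it is classical that a convenient polynomial has an isolated critical point at $\bm 0$.'' It is not: take $P=(z_1-z_2)^2(z_1+z_2)=z_1^3-z_1^2z_2-z_1z_2^2+z_2^3$, which is convenient (its Newton polytope is the segment from $(3,0)$ to $(0,3)$), yet $\partial P/\partial z_1=(3z_1+z_2)(z_1-z_2)$ and $\partial P/\partial z_2=-(z_1+3z_2)(z_1-z_2)$ share the factor $z_1-z_2$, so the critical locus contains the line $z_1=z_2$ and $\mu_0(P)=\infty$. Convenience alone does not force an isolated critical point; what is true is that convenience \emph{plus} Newton non-degeneracy does. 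The fix is cheap: when $\mu_0(P)=\infty$ the inequality $\mu_0(P)\geq\cN(P)$ is vacuous, so you only need to run the semicontinuity argument in the case $\mu_0(P)<\infty$. With that case split, the reduction of the inequality to the non-degenerate equality case via a generic pencil $P+tR$ inside $\C^{\NP(P)\cap\N^2}$ and conservation of intersection multiplicity for $(\partial_{z_1}P_t,\partial_{z_2}P_t)$ is correct, provided you also verify (as you do) that $\NP(P_t)=\NP(P)$ for small $t$ so that $\cN(P_t)=\cN(P)$.
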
 

\begin{theorem}[Bernstein]\label{thm:BKK}
Let $g_1,g_2\in\C[z_1,z_2]$ be two polynomials such that $(g_1,g_2)$ forms a Newton non-degenerate pair of polynomials. 
Then, the number of isolated solutions, counted with multiplicities, to the system $g_1=g_2=0$ outside $\{\bm{0}\}$ is equal to the mixed volume $\MV(\NP(f_1),\NP(f_2))$.
\end{theorem}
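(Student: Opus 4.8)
The plan is to identify the torus zero set of the system as an open part of the intersection of two divisors on a complete toric surface, and then to read off its cardinality from the classical correspondence between intersection numbers of toric divisors and mixed volumes~\cite{Ber75,khovanskii1978newton}; Newton non-degeneracy will guarantee both that no intersection point escapes to the toric boundary and that the affine intersection multiplicities $\mu_p(g_1,g_2)$ agree with the toric ones. Throughout, set $\Delta_i:=\NP(g_i)$ and $\Delta:=\Delta_1\oplus\Delta_2$.

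I would first dispose of the degenerate polytopes: if some $\Delta_i$ is a point then $g_i$ is a monomial, it has no zero in $\TT$, and $\MV(\Delta_1,\Delta_2)=0$; if $\Delta$ is one-dimensional then, up to monomial factors, $g_1$ and $g_2$ are univariate polynomials in a common $x=z^v$, so that every common zero in $\TT$ lies on a curve $\{x=\mathrm{const}\}$ and is non-isolated, while again $\MV(\Delta_1,\Delta_2)=0$. So assume $\Delta$ is two-dimensional, and let $X$ be the toric surface of the normal fan $\Sigma$ of $\Delta$ (see~\S\ref{sss:toric-lattice}); after refining $\Sigma$---which changes neither $\TT$, nor any torus point, nor the right-hand side of the claimed identity---I may take $X$ smooth and complete. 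Since $\Delta_i$ is a Minkowski summand of $\Delta$, its support function is $\Sigma$-piecewise linear, so $\Delta_i$ determines a basepoint-free line bundle $\mathcal{O}_X(D_{\Delta_i})$ of which $g_i$ is a global section; the zero divisor $\overline{Z}_i\in|D_{\Delta_i}|$ of this section restricts on the torus to $Z_i:=\Vs(g_i)$ and, because $\NP(g_i)=\Delta_i$, acquires no boundary component, so it is the canonical compactification of $Z_i$ in the sense of~\S\ref{sub:discr-orbits}.

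The count then rests on two ingredients. The first is the standard toric identity $D_{\Delta_1}\cdot D_{\Delta_2}=\MV(\Delta_1,\Delta_2)$, with exactly the normalization of~\S\ref{sss:not-prelim}. The second is that Newton non-degeneracy of the pair $(g_1,g_2)$ forces $\overline{Z}_1\cap\overline{Z}_2\subset\TT$: for each ray $\rho$ of $\Sigma$, with corresponding edge $\gamma=\gamma_1\oplus\gamma_2\prec\Delta$, the trace of $\overline{Z}_i$ on the one-dimensional orbit $V(\rho)\cong\C^*$ is cut out by $(g_i)_{\gamma_i}$, and non-degeneracy at $\gamma$ is precisely the statement that $(g_1)_{\gamma_1}$ and $(g_2)_{\gamma_2}$ have no common zero in $\C^*$; the zero-dimensional orbits are avoided because each $\overline{Z}_i$ is a canonical compactification. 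Applying non-degeneracy at the edges of $\Delta$ orthogonal to a hypothetical common factor also rules out $\overline{Z}_1$ and $\overline{Z}_2$ sharing a component, so the intersection is finite and proper, and
\[
\sum_{p\in\Vs(g_1)\cap\Vs(g_2)}\mu_p(g_1,g_2)=\overline{Z}_1\cdot\overline{Z}_2=D_{\Delta_1}\cdot D_{\Delta_2}=\MV(\Delta_1,\Delta_2),
\]
the first equality because for $p\in\TT$ one has $\mathscr{O}_{X,p}=\mathscr{O}_p$, where $\overline{Z}_i$ is cut out by $g_i$, so the local intersection number is $\dim_\C\mathscr{O}_p/\langle g_1,g_2\rangle$.

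It remains to pass from $\TT$ to $\C^2\setminus\{\bm 0\}$. A common zero of $g_1,g_2$ on a punctured coordinate axis, say $\{z_2=0,\,z_1\neq 0\}$, is a common root of $g_1(z_1,0)$ and $g_2(z_1,0)$, which up to a power of $z_2$ are the face polynomials $(g_1)_{\gamma_1},(g_2)_{\gamma_2}$ for $\gamma=\gamma_1\oplus\gamma_2$ the face of $\Delta$ minimizing the $z_2$-exponent; Newton non-degeneracy at $\gamma$ excludes such a root, the origin is excluded by hypothesis, and hence the count in $\C^2\setminus\{\bm 0\}$ equals $\MV(\Delta_1,\Delta_2)$. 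The step I expect to be the main obstacle is the boundary bookkeeping: verifying that non-degeneracy is genuinely being invoked at \emph{all} the relevant faces of $\Delta$ (its edges, the two axis-faces, and the faces that would detect a shared factor), and that the toric intersection multiplicity at a torus point really equals $\mu_p(g_1,g_2)$; once these are in place, the remainder is the textbook BKK computation.
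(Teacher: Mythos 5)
The paper does not itself prove this statement --- it cites it as a special case of~\cite{Ber75} and only invokes it under extra hypotheses --- so there is no internal proof to compare against. Your toric argument for the count in $\TT$ is the standard BKK proof and is sound: refine the normal fan of $\Delta_1\oplus\Delta_2$ to a smooth complete fan with toric surface $X$, regard each $g_i$ as a section of $\mathcal{O}_X(D_{\Delta_i})$, use non-degeneracy at the edges to keep $\overline{Z}_1\cap\overline{Z}_2$ off the boundary orbits, and read off $\MV(\Delta_1,\Delta_2)$ from $D_{\Delta_1}\cdot D_{\Delta_2}$.

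The gap is in the passage from $\TT$ to $\C^2\setminus\{\bm{0}\}$. You assert that an axis root $(a,0)$ with $a\neq 0$ is a common root of $g_1(z_1,0),g_2(z_1,0)$, ``which up to a power of $z_2$ are the face polynomials.'' But if the face $\gamma_1\prec\Delta_1$ supported by $(0,1)$ lies at height $m_1>0$, then $z_2^{m_1}\mid g_1$, so $g_1(z_1,0)\equiv 0$: the punctured axis lies inside $\V(g_1)$, and non-degeneracy at $\gamma$ --- which, writing $(g_i)_{\gamma_i}=z_2^{m_i}h_i(z_1)$, forbids only common $\C^*$-roots of $h_1,h_2$ --- says nothing about a root of $h_2$ alone, which produces an isolated solution of the system invisible to the mixed volume. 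Concretely, $g_1=z_1z_2+z_1^2z_2+z_2^2$ and $g_2=1+z_1+z_1^2$ form a Newton non-degenerate pair with $\MV(\Delta_1,\Delta_2)=2$, yet $\V(g_1,g_2)\setminus\{\bm{0}\}$ consists of four reduced points, two of them on $\{z_2=0\}$; the same issue sits in your degenerate case, since a monomial has no zero in $\TT$ but does vanish on a punctured axis. Both your argument and the theorem as stated become correct under the standing hypothesis that no coordinate divides $g_i$, equivalently that each $\Delta_i$ meets both coordinate axes --- exactly the regime in which the paper applies the result: in Theorem~\ref{thm:discriminant}~\ref{it:top-multiplicity} the $A_i^0$ contain $\bm{0}$, and in~\ref{it:non-properness} the relevant polytopes are convenient. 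You should either add this hypothesis explicitly or note that the statement is silently assuming it.
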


\begin{theorem}[Khovanskii]\label{thm:Khovanskii_genus2}
Let $D$ be a curve in $\TT$ with genus $\iota_D$ and Newton polytope $\Delta$, assume that $D$ has at worst mild singularities and that any $q\in \partial D$ is a smooth point of $\ovD\subset X_\Delta$ in its canonical toric compactification (see~\S\ref{sub:discr-orbits}). Then, we get
\begin{equation}\label{eq:genus-sing}
\#\Sin(D) = \hcir\Delta - \iota_{D}.
\end{equation}

\end{theorem}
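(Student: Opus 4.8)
\emph{Plan.} I would deduce the formula from the classical identification of the \emph{arithmetic} genus of a toric compactification with its number of interior lattice points, combined with the genus-defect formula that trades the gap between arithmetic and geometric genus for a sum of local $\delta$-invariants; the mildness hypothesis then enters only through the elementary fact that a simple node and an ordinary cusp both have $\delta$-invariant equal to $1$. Concretely: realize $\iota_D$ as the genus of the normalization of the canonical compactification $\ovD\subset X_\Delta$, show $p_a(\ovD)=\hcir\Delta$, and conclude $\#\Sin(D)=p_a(\ovD)-\iota_D=\hcir\Delta-\iota_D$.

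\emph{Step 1: pass to a smooth toric surface.} Since $\Delta=\NP(D)$, the compactification $\ovD\subset X_\Delta$ is canonical, hence $\ovD$ avoids all $0$-dimensional orbits; combined with the hypothesis, $\ovD$ is smooth at every point of $\partial D=\ovD\setminus D$, and as a toric surface is smooth away from its torus-fixed points, $\ovD$ lies in the smooth locus along $\partial D$. Let $\widehat X\to X_{\Sigma_\Delta}$ be a toric resolution coming from a unimodular refinement of the normal fan $\Sigma_\Delta$ (working, harmlessly, with the normalization $X_{\Sigma_\Delta}$, which is an isomorphism over $\TT$ and over every $1$-dimensional orbit). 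This resolution is an isomorphism away from the finitely many torus-fixed points; since $\ovD$ misses those points it is an isomorphism near $\ovD$, so the strict transform $\widehat D\subset\widehat X$ is isomorphic to $\ovD$, is smooth along the toric boundary, and — because $D\subset\TT$ forces that no component of $\widehat D$ is a boundary divisor — satisfies $[\widehat D]=[D_\Delta]$ in $\mathrm{Pic}(\widehat X)$, where $D_\Delta$ is the nef torus-invariant divisor attached to $\Delta$.

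\emph{Step 2: arithmetic genus $=\hcir\Delta$.} On the smooth projective surface $\widehat X$, adjunction gives $2\,p_a(\widehat D)-2 = D_\Delta\cdot(D_\Delta+K_{\widehat X})$. The standard toric computations $D_\Delta^2=2\,\Vol(\Delta)$ and $-K_{\widehat X}\cdot D_\Delta=\#(\partial\Delta\cap\Z^2)$, together with Pick's formula in the shape $\Vol(\Delta)=\hcir\Delta+\tfrac12\#(\partial\Delta\cap\Z^2)-1$, yield $D_\Delta\cdot(D_\Delta+K_{\widehat X})=2(\hcir\Delta-1)$, hence $p_a(\widehat D)=\hcir\Delta$. (Equivalently, one may simply cite Khovanskii's genus theorem for this identity.) For the genus defect, write $\nu:\widetilde D\to\widehat D$ for the normalization; then $p_a(\widehat D)=p_g(\widehat D)+\sum_p\delta_p$ with $\delta_p=\dim_{\C}\bigl(\nu_*\mathscr O_{\widetilde D}/\mathscr O_{\widehat D}\bigr)_p$, the sum over singular points of $\widehat D$. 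Since $\widehat D\cong\ovD$ is smooth along $\partial D$, these are precisely the singular points of $D$, and $\widetilde D$ is the smooth model of $D$, so $p_g(\widehat D)=\iota_D$.

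\emph{Step 3: conclude, and the hard part.} A direct local computation gives $\delta_p=1$ for a simple node ($zw=0$: two branches, $\delta=1$) and for an ordinary cusp ($w^2=z^3$: one branch, $\C[[t]]/\C[[t^2,t^3]]$ is $1$-dimensional); since $D$ has only mild singularities, $\sum_{p\in\Sin(D)}\delta_p=\#\Sin(D)$. Therefore $\#\Sin(D)=p_a(\widehat D)-\iota_D=\hcir\Delta-\iota_D$. The main obstacle is Step 1--2, namely making rigorous that $p_a(\ovD)=\hcir\Delta$: one must control the resolution so that $\widehat D\cong\ovD$ and so that $[\widehat D]=[D_\Delta]$ — this is exactly where the hypotheses (smoothness of $\ovD$ along $\partial D$, canonical compactification) are spent, and it is the statement's specialization of Khovanskii's theorem. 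An alternative, slightly messier route is to stay on $X_{\Sigma_\Delta}$ and use the orbifold adjunction formula while tracking the abelian quotient singularities.
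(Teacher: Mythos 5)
Your argument is correct, but it follows a genuinely different route from the one in the paper. The paper's proof, following Esterov, is a topological deformation argument: it perturbs the defining polynomial $P$ to $P+\varepsilon z^a$ to obtain a nearby smooth curve $S$ with $\overline{S}$ still smooth along the boundary, equates the drop $\chi(\ovD)-\chi(\ovS)$ with the sum $\sum_p\mu_p(\ovD)$ of Milnor numbers of the vanishing cycles, plugs in $\chi(\ovS)=2-2\hcir\Delta$ (Khovanskii's formula for a generic smooth curve) and $\chi(\ovD)=2-2\iota_D-\nu$, and recovers the count $\nu+\kappa$ from the bookkeeping $\mu=1$ for a node and $\mu=2$ for a cusp. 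You instead stay purely algebraic: you pass to a smooth toric resolution that is an isomorphism near $\ovD$ (which works precisely because the canonical compactification avoids the fixed points), identify $[\widehat D]$ with the nef divisor $D_\Delta$, compute the arithmetic genus $p_a(\widehat D)=\hcir\Delta$ by adjunction plus Pick's theorem, and invoke the genus-defect identity $p_a=p_g+\sum_p\delta_p$, with $\delta_p=1$ for both nodes and cusps. The two ``counting'' inputs are classically equivalent via Milnor's relation $\mu=2\delta-r+1$, but the proofs pass through different intermediate machinery: your version requires a smooth resolution and divisor class bookkeeping but needs no deformation, whereas the paper's version needs only Euler characteristics and the vanishing-cycle formula, at the cost of introducing an auxiliary smooth curve $S$. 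One small remark: your parenthetical claim that the normalization $X_{\Sigma_\Delta}\to X_\Delta$ is ``an isomorphism over every $1$-dimensional orbit'' is best justified by noting that every two-dimensional lattice polytope is normal, so in this dimension $X_\Delta$ is already normal and the issue disappears entirely.
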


\begin{proof}
We follow similar steps as in~\cite[Proof of Theorem 2.36]{esterov2013discriminant}. Assume that $P\in\C[z_1,z_2]$ satisfying $D=\Vs(P)$. Then, for any $a$ in $\Delta\cap\Z^2$, and for a small enough $\varepsilon\in\C$, the polynomial $P +\varepsilon z^a$ defines a smooth curve $S$ in $\TT$, close to $D$. Since $\ovD$ was smooth on $\partial D$, so will be $\overline{S}$. Then, the difference between the Euler characteristics of $\ovS$ and $\ovD$ is equal to the sum of the Milnor numbers of the singularities of $\ovC$ (see e.g.~\cite{khovanskii1978newton})
\begin{equation}\label{eq:Euler-singularities}
\chi(\ovD)- \chi(\ovS)   = \sum_{p\in\ovD} \mu_p(\ovD).
\end{equation} Let $\kappa$ and $\nu$ denote the number of cusps and nodes respectively of $D$. Since $\ovD$ is smooth at $\partial D$ and $\#\Sin D=\kappa+\nu$, the equality $\chi(\ovD) = 2 - 2\cdot \iota_{\ovD} - \nu$ holds and the right-hand side of~\eqref{eq:Euler-singularities} equals $2\cdot\kappa + \nu$. Furthermore, Applying the classical formulas in~\cite{khovanskii1978newton} to the smooth curve $\ovS$ yields
\[
\chi(\ovS) = 2 - 2\cdot \iota_{S} = 2 - 2\cdot \hcir\Delta.
\] Equation~\eqref{eq:genus-sing} is a follows by combining the above equalities $
2 - 2\cdot \iota_{D} - \nu -(2 - 2\cdot \hcir\Delta) = \nu+2\cdot\kappa$.
\end{proof}

Now, we are ready to prove Theorem~\ref{thm:discriminant}. Let $A\in\ccCgeq$, and let $U_n\subset\C^A$ denote the set of Newton-non-degenerate pairs $f\in\C^A$. It is known (see e.g.,~\cite{Kho16}) that $U_n$ is dense in $\C^A$. We set $\Omega_A$ to be the dense set in $\C^A$ given by
\[
\Omega_A:=U_c\cap U_s\cap U_d\cap U_n
\] involving subsets appearing in Theorems~\ref{thm:critical-smooth},~\ref{thm:discr-infty}, and~\ref{thm:non-properness-long}. Let $f\in\C^A$ be any element in $\Omega_A$.

\begin{proof}[Proof of Item~\ref{it:top-multiplicity}] 
Let $K_f$ denote the subset of all $w\in\C^2$ in which the set
\[
\V_*\big((f-w)_\gamma,~\partial (f-w)_\gamma/\partial z_1,~\partial(f-w)_\gamma/\partial z_2\big).
\] is empty for each semi-origin face $\gamma\prec A^0$. It can be checked that $K_f$ is Zariski open in $\C^2$. Then, the subset $K_f\times\Omega$ is dense in $\C^{A^0}$ and consists of Newton non-degenerate pairs of polynomials. Theorem~\ref{thm:BKK} implies that for every $w\in K_f$, the set $\V(f-w)$ has $\MV(A^0)$ isolated points in $\C^2\setminus\{\bm{0}\}$. If $K_f$ does not include the coordinate axes, then the inclusion $\V(f-w)\subset\C^2\setminus\{\bm{0}\}$ holds, and thus $\#\V(f-w)$ is the topological degree.
\end{proof}

\begin{proof}[Proof of Item~\ref{it:crit_genus}] All but the last assertion follow from Theorem~\ref{thm:critical-smooth}~\ref{it:smoothness}, Lemmas~\ref{lem:Newton_polytopes-invar},~\ref{lem:C_Newton-non}, and Theorem~\ref{thm:Kouchnirenko}. The genus of a smooth, Newton non-degenerate, curve is computed using Theorem~\ref{thm:Khovanskii_genus2} thanks to Remark~\ref{rem:singular-orbit-Newton-non}.\end{proof}

\begin{proof}[Proof of Item~\ref{it:crit_axes}]
This follows from simple computations and Remark~\ref{rem:gaps}.
\end{proof}

\begin{proof}[Proof of Item~\ref{it:discr_cusps}]
The first assertion is Lemma~\ref{lem:Newton_polytopes-invar}. Irreducibility and topological multiplicity of $\Dsf$ follow from Theorem~\ref{thm:discriminant}~\ref{it:crit_genus}. The genus of $\Dsf$ is deduced from Theorem~\ref{thm:critical-smooth}~\ref{it:genus=genus} and Theorem~\ref{thm:discriminant}~\ref{it:crit_genus}. Theorem~\ref{thm:critical-smooth}~\ref{it:cusps-nodes} asserts that all singularities of $\Dsf$ outside $\{\bm{0}\}$ are mild, and their number is computed using Theorem~\ref{thm:Khovanskii_genus2} thanks to Theorem~\ref{thm:discr-infty}.
\end{proof}

\begin{proof}[Proof of Item~\ref{it:non-properness}]
Let $\mathscr{G}(A)$ be the set of all semi-origin dicritical edges of $A^0$. It follows from the definitions that $\Sg:=\Sfg$ lies in $\C^2\setminus\TT$ if and only if $\gamma\in \mathscr{G}(A)$ is short. Furthermore, there are at most two long edges in $\mathscr{G}(A)$ (see e.g., Figure~\ref{fig:pair-example}). Then, Theorem~\ref{thm:non-prop-main} shows that
\[\cS_f^* = 
 \begin{cases}
  \Sg\cup \cS_{\gamma'}, & \text{if $\mathscr{G}(A)$ has exactly two long edges $\gamma$ and $\gamma'$}\\
  \Sg, & \text{if $\mathscr{G}(A)$ has exactly one long edges $\gamma$}\\
  \emptyset, & \text{otherwise}.
\end{cases} 
\] In what follows, thanks to Lemma~\ref{lem:Newton_polytopes-invar2}, we set $\cR  := \cS_\gamma$, $\cR'  := \cS_{\gamma'}$, $\Gamma :=\Gamma_\gamma$, and $ \Gamma':=\Gamma_{\gamma'}$. Then, the result follows from Theorems~\ref{thm:non-properness-long},~\ref{thm:BKK}, and~\ref{thm:Khovanskii_genus2}.
\end{proof}

\section{On computing the polyhedral type}\label{sec:algorithms}
In this section, we present lower bounds on the number of polyhedral types of conical pairs $A:=(A_1,A_2)\in\ccCgeq$ contained in $k$ times the affine $2-$simplex: $k\cdot~\sigma =  \{ (x_1, x_2) \in \R^2_{\geq 0} \mid x_1+x_2 \leq k\}$.
We achieve these lower bounds with the aid of software.
Consequently, we obtain the proof of Theorem~\ref{thm:main-bound} thanks to Theorem~\ref{thm:main2}. Our software and data 
are made available in the {\tt MathRepo} collection at MPI-MiS via
 \url{https://mathrepo.mis.mpg.de/PolyhedralTypesOfPlanarMaps}.
The source code can be downloaded at \url{https://github.com/kemalrose/PolyhedralTypesOfPlanarMaps}.
The following functionality is provided:
\begin{itemize}
    \item Listing all planar polytopes of a bounded degree.
    \item Computing the polytope $\Delta$ corresponding to a conical pair $A\in\ccCgeq$ (see Theorem~\ref{thm:discriminant}).
\end{itemize}
After giving some examples, the rest of this section is devoted to explaining the algorithms for the above bullet points. These are the only non-trivial components for computing the topological type $\Psi(A)$. Indeed, the values of $\Psi(A)$ regarding mixed volumes, and Newton numbers have a straightforward implementation using the definitions, and equations for polytopes $\Gamma$ and $\Gamma'$ in Theorem~\ref{thm:discriminant}~\ref{it:non-properness} can be found at the end of the proof of Theorem~\ref{thm:non-properness-long}.

\begin{example}
Our software can be downloaded and activated using the following \emph{Julia} commands:
\begin{verbatim}
    using Pkg;
    Pkg.add(url = "https://github.com/kemalrose/PolyhedralTypes.jl");
    using PolyhedralTypes;
\end{verbatim}
The following line of code lists the planar polytopes contained in $k\cdot~\sigma$ for $k = 1, \dots, 9$.
There are
$(1, 27, 232, 1473, 8273, 43385, 214946, 1013317, 4559702)$ polytopes respectively.
\begin{verbatim}
    pol_Lists = list_all_polygons.([1,2,3,4,5,6,7,8,9])
\end{verbatim}
Filtering those that contribute to conical pairs, we confirm their respective number to be $(0,1,68,899,6795)$:
\begin{verbatim}
    conic_pols = [filter(verts->is_conic(verts), pol_list) for pol_list in pol_Lists]
\end{verbatim}
We obtain $0,1,2346,404550,23089410$ for the total numbers of conic pairs $A\in\ccCgeq$ with respective degree $\leq 1,2,3,4,5$, and up to interchanging $A_1$ and $A_2$.
\end{example}

\begin{example}
The following \texttt{Julia} code demonstrates our software. It recovers both the polyhedral type and the Newton polytope $\Delta$ of the discriminant from Example \ref{ex:main}.
\begin{verbatim}
    A1 = [0 2 4 2; 2 2 4 6];
    A2 = [1 2 5 3; 2 2 5 6];
    Delta = get_delta(A1, A2);
    println(Delta.pm_polytope.POINTS);
    psi = get_polyhedral_type(A1,A2);
    println(psi);
\end{verbatim}
\end{example}

  We remark that computing $A$-discriminants is a hard problem. Both their degrees and the size of their coefficients grow exceedingly large. Even examples of small size can be untractable for Gr\"obner basis and eliminination methods.
    Furthermore, resorting to computation over finite fields is unreliable when computing the Newton polytope.
    We encountered examples were, even for large finite fields $\mathbb{F}_p$,
    the Newton polytope of the rational Discriminant could not be recovered using Gr\"obner basis methods, since some of the rational coefficients where divisible by $p$.


\subsection{Listing planar polytopes of bounded degree}\label{sub:listing}
\label{subsection: listing planar polytopes}
The following pseudocode is based on \cite{BOUSQUETMELOU19961}. Given a finite subset $S$ of the lattice $\Z^2$, it lists all polygons whose vertices are contained in $S$.
A polygon is represented uniquely by its vertices, ordered clockwise and starting with the lexicographically minimal element.
In particular, every polygon is identified with a word in $S$.
We denote by $S^*$ the set of words with letters in $S$,
and further we denote by $ \mathcal{P} \subseteq  S^*$ the set of words representing a polygon.
We denote by
$\star : S^* \times S^* \longrightarrow S^*, \ (\omega_1, \dots, \omega_k)\star(\kappa_1, \dots, \kappa_l) = (\omega_1, \dots, \omega_k, \kappa_1, \dots, \kappa_l)$
the concatenation operator.
This induces a partial order on $S^*$ and $\mathcal{P}$:
for words $\omega, \kappa$ we set $\omega \leq \kappa$ if
$\omega$ is a prefix of $\kappa$:
\[
\omega \leq \kappa := \exists h \in S^* \mid \ \kappa = \omega \star h.
\]

Given lattice points $w_1$ and $w_2$ we denote by $H_{w_1,w_2}$ the open affine half space that is to the right of the line from $w_1$ to $w_2$.
Let $\omega =  w_1 w_2 \cdots w_{n-1}w_n $ in $\mathcal{P}$ represent a polygon with at least three vertices and let $v$ be an element of $S$. We say that $v$ is admissible for $\omega$ if:
\[
v \in H_{w_1,w_2} \cap H_{w_1,w_n} \cap H_{w_{n-1},w_n}, \ w_1 <_{ \operatorname{lex} } v.
\]

\begin{figure}[h]
\centering
\includegraphics[width = 6cm]{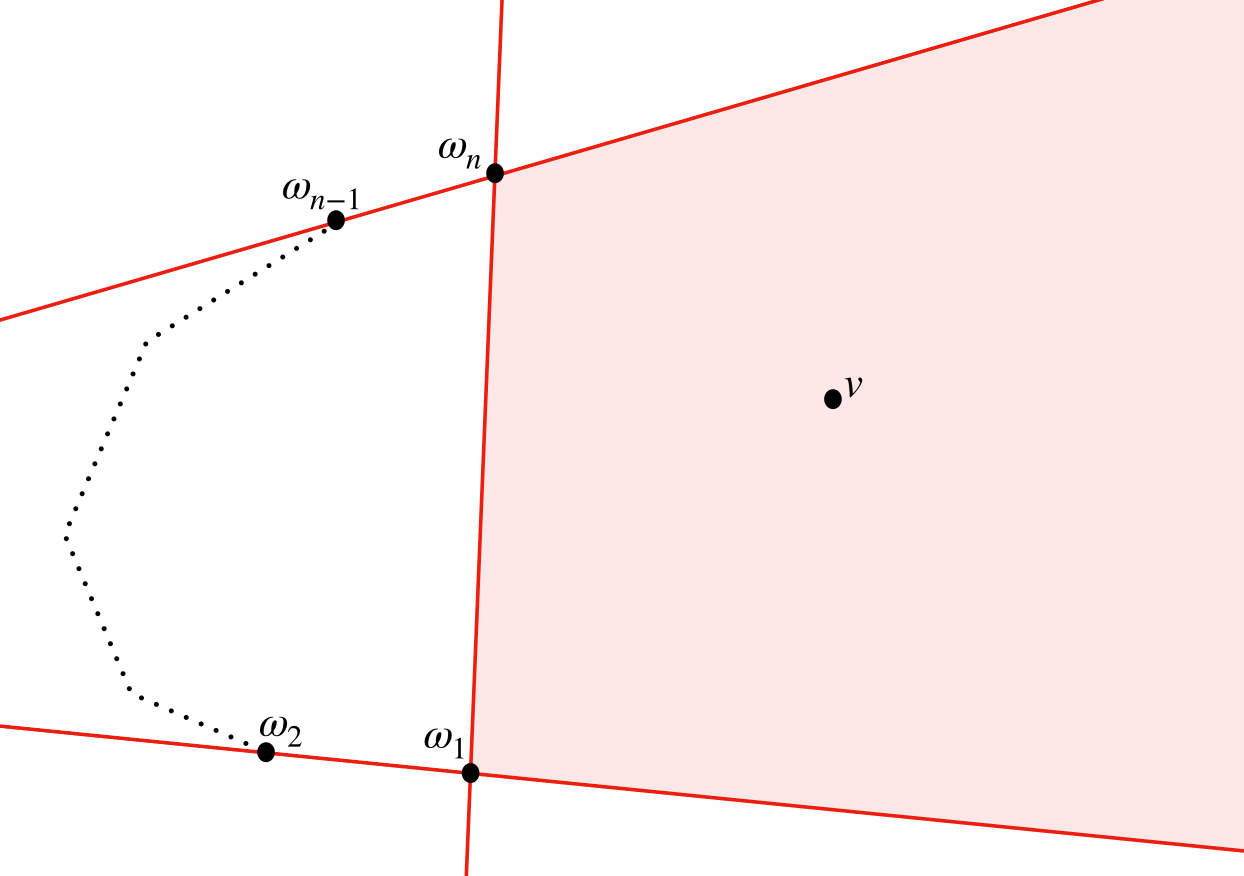} \vspace{-0.14in}
\caption{The halfspaces $H_{w_1,w_2}$, $H_{w_1,w_n}$ and $H_{w_{n-1},w_n}$ intersect in the red area.}
\label{fig: classical vs tropical implicitization}
\end{figure}

Our algorithm is based on the following observation made in \cite{BOUSQUETMELOU19961}:
let $\omega \in \mathcal{P}$ with at least three vertices, and $s\in S$.
\begin{proposition}
    The word $\omega \star(s)$ is an element of $\mathcal{P}$ 
    if and only if $s$ is admissible for $\omega$.
\end{proposition}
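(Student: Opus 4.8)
The plan is to unwind the definition of $\mathcal{P}$ and reduce the statement to a small amount of plane geometry. Recall that $\omega=w_1\cdots w_n\in\mathcal{P}$ with $n\ge 3$ means exactly that $w_1,\dots,w_n$ are the vertices of the $2$-dimensional polygon $\conv\{w_1,\dots,w_n\}$, listed clockwise and starting at the lexicographically smallest one; in particular $\conv\{w_1,\dots,w_n\}$ lies in $\overline{H_{w_i,w_{i+1}}}$ for every $i=1,\dots,n-1$ and in $\overline{H_{w_n,w_1}}$, the last being the closed half-plane complementary to the open half-plane $H_{w_1,w_n}$. Everything rests on one elementary remark: for points $a,b,c$ the path $a\to b\to c$ turns clockwise at $b$ if and only if $c\in H_{a,b}$, equivalently $a\in H_{b,c}$ (each condition says that the triangle $abc$ is clockwise-oriented). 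Applying this three times gives the dictionary
\[
s\in H_{w_1,w_2}\iff (s,w_1,w_2)\ \text{turns clockwise},\qquad s\in H_{w_{n-1},w_n}\iff (w_{n-1},w_n,s)\ \text{turns clockwise},
\]
together with $s\in H_{w_1,w_n}\iff (w_n,s,w_1)$ turns clockwise. Thus the three half-plane conditions defining admissibility are precisely the three ``new'' strictly-clockwise-turn conditions created by inserting $s$ between $w_n$ and $w_1$ in the cyclic list, while $w_1<_{\operatorname{lex}}s$ records that $w_1$ remains the lex-minimal vertex.

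With this dictionary the forward implication is immediate. If $\omega\star(s)=w_1\cdots w_n s\in\mathcal{P}$, then this word is the clockwise vertex-list of a convex polygon starting at its lex-minimal vertex, so $w_1<_{\operatorname{lex}}s$ (strictly, the vertices being distinct), and the turns of the list at $w_1$, at $w_n$ and at $s$ are all strictly clockwise; by the dictionary these say $s\in H_{w_1,w_2}\cap H_{w_{n-1},w_n}\cap H_{w_1,w_n}$, i.e.\ $s$ is admissible.

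For the converse the crux is a convex-geometric lemma: if $s\in H_{w_1,w_2}\cap H_{w_{n-1},w_n}\cap H_{w_1,w_n}$, then $s$ lies strictly on the interior side of every edge of $\omega$ except the wrap-edge $w_nw_1$, i.e.\ $s\in H_{w_i,w_{i+1}}$ for all $i=1,\dots,n-1$. I would prove this in two steps. First, for an exterior point of a convex polygon the edges whose supporting line strictly separates the point from the polygon form a non-empty \emph{consecutive} run of edges (they are exactly the edges whose outer normal lies in the open arc of directions separating the point from the polygon; this is standard, e.g.\ via the Gauss map of $\omega$). Since $\conv\{w_1,\dots,w_n\}\subseteq\overline{H_{w_n,w_1}}$ while $s\in H_{w_1,w_n}$, the point $s$ is exterior to $\omega$ and $w_nw_1$ belongs to this run, whereas $s\in H_{w_1,w_2}\cap H_{w_{n-1},w_n}$ excludes the two neighbours $w_1w_2$ and $w_{n-1}w_n$ of $w_nw_1$; as $n\ge 3$, the run must therefore be exactly $\{w_nw_1\}$. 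Second, $s$ cannot lie on the supporting line of any other edge $e$ of $\omega$, for then $s$ would sit on the extension of $e$ past one of its endpoints, and the clockwise turn of $\omega$ at that endpoint would place $s$ strictly exterior to a neighbour of $e$; by the first step that neighbour is $w_nw_1$, forcing $e\in\{w_1w_2,\,w_{n-1}w_n\}$, contrary to hypothesis. Combining the two steps yields the lemma.

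Granting the lemma I would finish as follows. Since $s$ is exterior to $\omega$ and strictly on the interior side of every edge other than $w_nw_1$, the only part of $\partial\omega$ visible from $s$ is the segment $[w_n,w_1]$, so $K:=\conv\{w_1,\dots,w_n,s\}=\omega\cup\conv\{w_n,w_1,s\}$ with $\conv\{w_n,w_1,s\}\cap\omega=[w_n,w_1]$; hence $\partial K$ traverses $w_1\to w_2\to\cdots\to w_n\to s\to w_1$ with the same clockwise orientation as $\partial\omega$, and no $w_i$ is absorbed. Each consecutive triple along this cycle is non-collinear: the triples $(w_{i-1},w_i,w_{i+1})$ for $2\le i\le n-1$ because $\omega\in\mathcal{P}$, and $(w_{n-1},w_n,s)$, $(w_n,s,w_1)$, $(s,w_1,w_2)$ because the \emph{open} conditions $s\in H_{w_{n-1},w_n}$, $s\in H_{w_1,w_n}$, $s\in H_{w_1,w_2}$ keep $s$ off the three relevant lines. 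Therefore $w_1,\dots,w_n,s$ are exactly the vertices of the convex polygon $K$ in clockwise order, and since $w_1<_{\operatorname{lex}}s$ with $w_1$ lex-minimal among $w_1,\dots,w_n$, it is lex-minimal among all $n+1$ vertices; thus $\omega\star(s)\in\mathcal{P}$. The step I expect to need genuine care in the write-up is the lemma — a clean justification of the ``consecutive run of separating edges'' fact and a careful treatment of the degenerate cases where $s$ lies on the supporting line of a non-neighbouring edge; the rest is the turn/half-plane dictionary and routine bookkeeping about convex hulls.
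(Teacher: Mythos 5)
The paper itself does not prove this proposition: it is attributed to Bousquet--Mélou (\cite{BOUSQUETMELOU19961}) and stated without argument immediately before Algorithm~1, so there is no in-paper proof to compare against. Evaluating your argument on its own terms, it is correct. The ``dictionary'' between the three open half-plane conditions and the three new clockwise turns at $w_1$, $w_n$, and $s$ is a correct identity ($\det[b-a,\,c-b]=\det[b-a,\,c-a]$, together with the cyclic symmetry $c\in H_{a,b}\iff a\in H_{b,c}\iff b\in H_{c,a}$), and the forward implication falls out of it directly. In the converse, the lemma that admissibility forces $s\in H_{w_i,w_{i+1}}$ for every $i=1,\dots,n-1$ is the only non-formal step, and your two-stage argument for it — first that the strictly separating edges form a nonempty cyclic run pinned to $\{w_nw_1\}$ by the exclusion of its two neighbours, then that $s$ cannot lie on the supporting line of any non-wrap edge because the clockwise turn at the relevant endpoint would make its neighbour a separating edge — is sound; and the closing non-collinearity checks correctly use that the three half-plane conditions are open. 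You are right that the ``consecutive run of separating edges'' fact deserves a line of justification in a final write-up (e.g.\ via concavity of $\theta\mapsto\langle\theta,s\rangle-h_\omega(\theta)$, whose positivity region is a convex cone meeting $S^1$ in a single open arc), but the overall structure and conclusions are correct.
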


Our algorithm computes all elements of $\mathcal{P}$ by performing a
breadth-irst search through the underlying graph of the ordering $\leq$.
\begin{algorithm}
\caption{Listing bounded polygons}\label{euclid}
\begin{algorithmic}[1]
\Procedure{listPolytopes(S)}{}
\State $    \textit{polytopeList} \gets \emptyset$
\State $   \textit{newPolytopes}  \gets \{ (s_1, s_2) \ \operatorname{for} \ s_1 \in S, \ s_2 \in S,\ s_1 < s_2\}$
\While {$ \#(\textit{newPolytopes})$ > 0} 
\State        $\textit{polytopeList} \gets \textit{polytopeList} + \textit{newPolytopes}$
\State $\textit{extendedPolytopes} \gets \emptyset$
\For{ $\textit{polytope} \in \textit{newPolytopes}$}
\For{ $s \in S$}
\If  { $\operatorname{isAdmissible}(polytope, s)$}
\State  $\textit{extendedPolytopes}\gets \textit{extendedPolytopes} \cup \{ polytope+s \}$
\EndIf
\EndFor
\EndFor
\State $\textit{newPolytopes} \gets \textit{extendedPolytopes}$
\EndWhile
\State \Return \textit{polytopeList}
\EndProcedure
\end{algorithmic}
\end{algorithm}


\subsection{The Newton polytope of the discriminant} Here we present an algorithm for computing the polytope $\Delta$ corresponding to a $A\in\ccCgeq$ (see Theorem~\ref{thm:discriminant}). Its correctness requires some preliminary results.

\subsubsection{The lifted polytope}\label{sss:prel-correct}

 Recall that $A^0$ can have exactly two, one, or no long dicritical edges that are not origin. 
In the first case, we denote these edges by $\gamma:=(\gamma_1,\gamma_2)$ and $\delta:=(\delta_1,\delta_2)$, and in the second case, we denote it by $\gamma\prec A$ (see e.g., Figure~\ref{fig:semi-origins}). We fix once and for all the indexes $i,j\in\{1,2\}$ that satisfy $\bm{0}\not\in\gamma_i$ and $\bm{0}\not\in\delta_j$. 

\begin{figure}[htb]
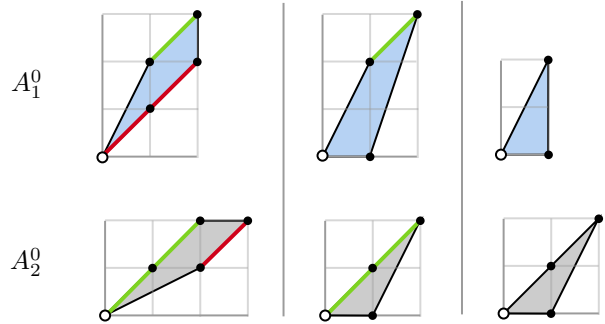


\tikzset{every picture/.style={line width=0.75pt}} 



\caption{Three examples of pairs $A^0$: The left-most has two dicritical long semi-origin edges that are not origin, the middle pair has one and the right-most has none.}\label{fig:semi-origins}
\end{figure}

\begin{definition}\label{def:correction_term}
Let $\Pi$ be a \emph{rational polytope} in $\R_{\geq}^2$. That is, the set of vertices $\Ver(\Pi)$ of $\Pi$ forms a subset of $\Q^2$. Its \emph{lifted polytope under $A$}, denoted by $\Lft(\Pi)$, is defined as the convex hull $\conv(K)$ of the set
\[
K:= \bigcup_{b:=(b_1,b_2)\in \Ver(\Pi)} \left(b_1\cdot A^0_1~\oplus~b_2~\cdot A^0_2 \right), 
\] where for any $\lambda\in\R$, $S\subset\R^n$, we write $
\lambda~\cdot~S:=\{\lambda\cdot s~|~s\in S\}$. 
 The \emph{$A$-correction term of $\Pi$} is defined as
\[
\cT_A(\Pi):= \max_{b\in\Ver(\Pi)}\{b_i~(\#~\gamma_i\cap\N^2 - 1)~ +~ b_j~(\#~\delta_j\cap\N^2 - 1)\},
\] where we set $\#~\gamma_i\cap\N^2 - 1:=0$ or  $\#~\delta_i\cap\N^2 - 1:=0$ if $A^0$ has less than two long dicritical edges that are not semi-origin.
\end{definition} 

\begin{theorem}\label{thm:correction_term-discriminant}
Let $A\in\ccCgeq$ be a dicritical pair, and let $\Sigma$ and $\Delta$ be the two polytopes, corresponding to $A$, appearing in Theorem~\ref{thm:discriminant}. For any lattice polytope $\Pi\subset\R_{\geq 0}^2$, the following equality holds
\begin{equation}\label{eq:correction_term-discriminant}
\MV(\Delta,~\Pi) = \MV(\Sigma,~\Lft(\Pi)) - \cT_A(\Pi).
\end{equation}
\end{theorem}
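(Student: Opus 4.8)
The plan is to realise each side of \eqref{eq:correction_term-discriminant} as a toric intersection number and to relate the two through the birational parametrisation $f\colon\Csf\to\Dsf$ of the discriminant. Fix a generic $f\in\Omega_A\subset\C^A$ as in Theorem~\ref{thm:discriminant}; let $Q$ cut out $\Csf$, so that $\NP(Q)=\Sigma$ by Lemma~\ref{lem:Newton_polytopes-invar}, and let $D$ cut out $\Dsf$, so that $\NP(D)=\Delta$. First I would note that, viewed as functions of $\Pi$, both sides of \eqref{eq:correction_term-discriminant} are additive under Minkowski sums and positively homogeneous: the left side by bilinearity of $\MV$; the right side because $\Lft(\Pi\oplus\Pi')=\Lft(\Pi)\oplus\Lft(\Pi')$ (using $(\lambda+\mu)A_i^0=\lambda A_i^0\oplus\mu A_i^0$, which holds as $A_i^0$ is convex), and because $\cT_A(\Pi)$ is the support function of $\Pi$ evaluated at the fixed vector whose $i$-th (resp.\ $j$-th) coordinate is $\#(\gamma_i\cap\N^2)-1$ (resp.\ $\#(\delta_j\cap\N^2)-1$). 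Hence it is enough to treat \eqref{eq:correction_term-discriminant} when $\Pi$ is Minkowski-indecomposable, i.e.\ a segment or a triangle.

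Next I would transport the left-hand side to $\Csf$. For generic $g\in\C^\Pi$ one has $\NP(g)=\Pi$; since $\Dsf$ is Newton non-degenerate (Theorem~\ref{thm:discr-infty} with Remark~\ref{rem:singular-orbit-Newton-non}), the pair $(D,g)$ is Newton non-degenerate, so by Theorem~\ref{thm:BKK} the finite set $\Dsf\cap\Vs(g)$ has $\MV(\Delta,\Pi)$ points; for generic $g$ it lies in $\TT$ and avoids the finite locus over which $f|_{\Csf}$ fails to be injective (Theorems~\ref{thm:critical-smooth}, \ref{thm:discriminant}\,\ref{it:crit_genus}). Thus $f$ restricts to a bijection $\Vs(Q,g\circ f)\to\Dsf\cap\Vs(g)$, whence
\[
  \MV(\Delta,\Pi)\;=\;\#\Vs(Q,g\circ f).
\]
Writing $g=\sum_b c_b w^b$, the pullback $g\circ f=\sum_b c_b f_1^{b_1}f_2^{b_2}$ has, for generic $c_b$, Newton polytope $N:=\conv\bigl(\bigcup_{b\in\Pi\cap\N^2}b_1A_1\oplus b_2A_2\bigr)\subseteq\Lft(\Pi)$, so $\#\Vs(Q,g\circ f)\le\MV(\Sigma,N)\le\MV(\Sigma,\Lft(\Pi))$ by monotonicity of the mixed volume, and it remains to pin down the two defects.

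The first defect comes from degeneracy of the pair $(Q,g\circ f)$. For an edge $\sigma\prec\Sigma$ with primitive inner normal $v$ for which the $v$-faces $A_1^v$, $A_2^v$ are edges with common direction $w$ and base exponents $u_1,u_2$ — so that the $v$-truncations of $f_1,f_2$ are $z^{u_1}p_1(z^w)$, $z^{u_2}p_2(z^w)$ with $p_1,p_2$ univariate — a direct calculation gives that $Q_\sigma$ is, up to a monomial, $\det(u_1,u_2)\,p_1p_2+z^w\bigl(\det(u_1,w)\,p_1p_2'-\det(u_2,w)\,p_1'p_2\bigr)$, while $(g\circ f)_\sigma$ is, up to a monomial and a constant, $p_1^{b(v)_1}p_2^{b(v)_2}$ with $b(v)$ the vertex of $\Pi$ at which $\langle v,\cdot\rangle$ is minimal. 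Comparing, and discarding the degenerate events (double roots of a $p_k$, shared roots of $p_1,p_2$) that a generic $f$ rules out, a common zero in $\TT$ forces the supporting line of $A_1^v$ or of $A_2^v$ to pass through the origin; thus the pair is Newton non-degenerate off the inner normals of the long dicritical edges $\gamma,\delta$ of $A^0$ (the \emph{origin} edges contribute nothing, as $\det\Jac_z f$ vanishes identically along them so $\Sigma$ has no edge there, while short dicritical edges, whose non-properness components lie on the coordinate axes, need only an easier separate treatment). At a face dual to $\gamma$ (resp.\ $\delta$) the count of torus solutions drops, the loss being governed by $b(v)_i\bigl(\#(\gamma_i\cap\N^2)-1\bigr)$ (resp.\ $b(v)_j\bigl(\#(\delta_j\cap\N^2)-1\bigr)$), the bracketed factor being the degree of the $p_k$ producing the spurious zeros.

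The second defect, $\MV(\Sigma,\Lft(\Pi))-\MV(\Sigma,N)$, I would evaluate face by face: it is carried exactly by the dicritical edges of $A^0$, since those are precisely the edges of $A_1^0$ and $A_2^0$ that ``see'' the vertex $\bm{0}$ adjoined in passing from $A_i$ to $A_i^0$, and one checks that along such a face its size is complementary to the loss of the previous step. Adding the two and invoking the normal-vector rule of Remark~\ref{rem:rays} — which sends the direction $\alpha$ to the edge of $\Delta$ with normal $\bigl(\langle r(\alpha),\alpha\rangle,\langle s(\alpha),\alpha\rangle\bigr)$ and so pairs each relevant $v$ with the $\Pi$-vertex $b(v)$ — the total of $b(v)_i(\#(\gamma_i\cap\N^2)-1)+b(v)_j(\#(\delta_j\cap\N^2)-1)$ over the dicritical directions equals $\max_{b\in\Ver(\Pi)}\{b_i(\#(\gamma_i\cap\N^2)-1)+b_j(\#(\delta_j\cap\N^2)-1)\}=\cT_A(\Pi)$ for indecomposable $\Pi$. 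This yields $\#\Vs(Q,g\circ f)=\MV(\Sigma,\Lft(\Pi))-\cT_A(\Pi)$, which together with the second paragraph proves \eqref{eq:correction_term-discriminant}. The main obstacle is exactly this last step: showing that the two geometric losses — the passage $\Lft(\Pi)\supseteq N$ and the escaped toric-boundary solutions at degenerate faces — localise on the dicritical edges of $A^0$ and sum to the closed form $\cT_A(\Pi)$ with the precise multiplicities $\#(\gamma_i\cap\N^2)-1$ and $\#(\delta_j\cap\N^2)-1$.
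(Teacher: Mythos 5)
Your overall strategy is the same as the paper's: push the count $\MV(\Delta,\Pi)$ back to $\Csf$ via $f$, compare it to the toric intersection number $\MV(\Sigma,\Lft(\Pi))$, and account for the difference through the dicritical edges of $A^0$. Your opening reductions (the bijection $\Vs(Q,g\circ f)\to\Dsf\cap\Vs(g)$ using genericity of $g$ and injectivity of $f|_{\Csf}$ off a finite set, and the Minkowski-additivity of both sides) are correct, and your explicit edge computation $Q_\sigma = \det(u_1,u_2)\,p_1p_2+z^w\bigl(\det(u_1,w)\,p_1p_2'-\det(u_2,w)\,p_1'p_2\bigr)$ checks out. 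You also spot something the paper glosses over: the Newton polytope $N$ of $g\circ f$ is in general only contained in $\Lft(\Pi)$, not equal to it, so the comparison must handle both the boundary degeneracy and the gap $\MV(\Sigma,\Lft(\Pi))-\MV(\Sigma,N)$. This two-defect bookkeeping is a more careful version of the paper's single step.

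However, there is a genuine gap, and you say so yourself in your final sentence. The entire content of the theorem is the quantitative claim that the defect is exactly $\cT_A(\Pi)$, and this is precisely the step you leave as a sketch (``I would evaluate face by face'', ``one checks that along such a face its size is complementary''). The paper carries it out concretely: at a long dicritical non-origin edge $\gamma$ with $\bm{0}\notin\gamma_1$, it shows $\det\Jac_z(f)_\sigma = f_{\gamma_1}\cdot h(z^v)$ and $P(f(z))_\ell = f_{\gamma_1}^{\beta}\cdot(\text{unit on }\Vs(h))$ with $\beta=\max_b b_1$, so the boundary intersection contributes exactly $(\#\gamma_1\cap\N^2-1)\cdot\beta$; summing over $\gamma$ and $\delta$ then gives $\cT_A(\Pi)$. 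Moreover, your final summation claim --- that $b(v_\gamma)_i(\#\gamma_i\cap\N^2-1)+b(v_\delta)_j(\#\delta_j\cap\N^2-1)$ equals $\max_{b\in\Ver(\Pi)}\{b_i(\#\gamma_i\cap\N^2-1)+b_j(\#\delta_j\cap\N^2-1)\}$ --- involves two potentially distinct extremal vertices $b(v_\gamma),b(v_\delta)$ on the left and a single extremal vertex on the right; you need to argue that these coincide (or that the separate maxima are attained simultaneously), which is not automatic even for Minkowski-indecomposable $\Pi$. Until that is established the identity with $\cT_A(\Pi)$ is unproved.
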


\begin{proof} 
We follow similar steps to those in the proof of~\cite[Lemma 4.3]{FJR19}. Let $f\in\C^A$ be a generic map $\CtC$, and consider any polynomial $P\in\C[w_1,w_2]$ satisfying 
\begin{itemize}

	\item $\Pi = \NP(P)$,

	\item the pair of curves $(\Vs(P),\Dsf)$ is Newton non-degenerate, and

	\item $\Vs(P)\cap \Dsf$ consists of transversal intersections.	

\end{itemize} Then, the lifted curve $\cL^*:=f^{-1}(\Vs(P))$ intersects $\Csf$ transversally in $\TT$, and thus
\begin{align}\label{eq:preimage-enter-crit}
\#\Vs(P)\cap \Dsf & = \#\cL^*\cap\Csf.
\end{align} Since $\# \Vs(P)\cap \Dsf=\MV(\Delta,~\Pi)$ according to Theorem~\ref{thm:BKK}, it is enough to show 
\begin{align}\label{eq:lft=mv}
\cL^*\cap\Csf & = \MV(\Sigma,~\Lft(\Pi)) - \cT_A(\Pi).
\end{align} 
Choosing $P$ generic enough, and making the substitution $(w_1,w_2)\leftrightarrow (f_1(z), f_2(z))$ in $P$, we get 
\[
\NP(\cL^*)=\Lft(\Pi),
\] where $P$ is expressed as $\sum c_b~w^b$. Then, results in~\cite{rojas1999toric} (see also~\cite[Section 5.5, pp. 121]{Ful93}) show that $\MV(\Sigma,~\Lft(\Pi))$ equals the total number of isolated intersection points (counted with multiplicities) of the two compactifications $\ovL:=\varsigma\circ \Phi_\Theta(\cL^*)$ and $\ovC:=\varsigma\circ \Phi_\Theta(\Csf)$ in the toric variety $X_\Theta$ corresponding to the polytope $\Theta:=\Sigma\oplus\Lft(\Pi)$ (see~\S\ref{sss:toric-lattice}). The above-referenced result can be viewed as a more detailed version of Bernstein's Theorem~\cite{Ber75}. Therefore, since $\#\ovC\cap\ovL = \#\partial C\cap\partial L + \#\cL^*\cap\Csf$, it is enough to show the following statement: 
\begin{equation}\label{eq:statement_2}
\text{\textit{$\cT_A(\Pi)$ is equal to the weighted number of intersection points of $\overline{\cL}\cap\ovC$ at the $1$-orbits of $X_\Theta$.}}
\end{equation}
Note that edges $\eta:=(\sigma,~\ell)$ of the pair of polytopes $(\Sigma,~\Lft(\Pi))$ are in bijection with edges of $\Sigma+\Lft(\Pi)$. Then, Theorem~\ref{thm:toric_2} shows that above edges $\eta$ correspond to $1$-orbits $V(\eta)$ of $X_\Theta$. Furthermore, if $\overline{\cL}\cap\ovC$ has a solution at some $V(\eta)$, then 
\begin{equation}\label{eq:Jacobian-lifted_restr}
\det\Jac_{z}(f)_\sigma = P(f(z))_\ell = 0
\end{equation} has a solution $z\in\TT$. It follows from the definitions that the restricted polynomial $P(f(z))_\ell$ can be expressed as
\begin{equation}\label{eq:expression_lift-restr}
P(f(z))_\ell= \sum_{b\in e\cap\N^2} c_b~f_{\gamma_1}^{b_1}~ f_{\gamma_2}^{b_2}, 
\end{equation} for some face $e\prec\Pi$, and a face $\gamma:=(\gamma_1,\gamma_2)\prec A^0$. One can choose the coefficients of $f_1$, $f_2$, and $P$ to be generic enough so that the system~\eqref{eq:Jacobian-lifted_restr} has a solution in $\TT$ if ad only if $\det\Jac_{z}(f)_\sigma$ has a factor of the form $f^\alpha_\gamma:=f_{\gamma_1}^{\alpha_1}~f_{\gamma_2}^{\alpha_2}$ for some $\alpha_1,\alpha_2\in\{0,1\}$. Using a case-by-case analysis on the types of edges of $A^0$, which we omit here, we deduce that if $\det\Jac_{z}(f)_\sigma$ is a factor of $f_{\gamma}^{\alpha}$, then $\det\Jac_{z}(f)_\sigma = \det\Jac_{z}(f_\gamma)$, and $\gamma$ is a long dicritical and not origin. 

To finish proving Statement~\eqref{eq:statement_2}, assume that $A^0$ has a long dicritical edge $\gamma$ that is not origin. Without loss of generality, we may furthermore set $\bm{0}\not\in\gamma_1$. Then, a simple computation shows that the following two equalities hold
\begin{align*}
\det\Jac_{z}(f)_\sigma & = f_{\gamma_1}\cdot h(z^v)\\
P(f(z))_\ell & = f_{\gamma_1}^{\beta}~\big(\sum_{b:=(\beta,k)\in\Ver(\Pi)} c_b f_{\gamma_2}^k\big),
\end{align*} where $v\in\Z^2$ is the spanning vector of $\gamma_2$, the polynomial $h$ is a univariate, the system $h(z^v)=P(f(z))_\ell = 0$ has no solutions in $\TT$, and $\beta:=\max \{b_1:(b_1,b_2)\in\Ver(\Pi)\}$. Note that each of $f_1$ and $\det\Jac_z f$ are Newton non-degenerate. Then, since $f_{\gamma_1}$ is quasi-homogeneous, it contributes to $\#~\gamma_1\cap\N^2 - 1$ solutions at $V(\eta)$, and thus $\partial L\cap\partial C$ consists of $\#~\gamma_1\cap\N^2 - 1$ points at $V(\eta)$, each counted with multiplicity $\beta$. Since $A^0$ has at most two dicritical long non-semi-origin edges $\gamma,\delta\prec A^0$, at most two orbits of $X_\Theta$ contribute to solutions at the boundary of $X_\Theta$. Summing up the number of those boundary solutions, counted with multiplicities, we get exactly $\cT_A(\Pi)$.
\end{proof}

Theorem~\ref{thm:correction_term-discriminant} extends to rational polytopes.

\begin{corollary}\label{cor:correction_term-discriminant}
Let $A\in\ccCgeq$ be a dicritical pair, and let $\Sigma$ and $\Delta$ be the two polytopes, corresponding to $A$, appearing in Theorem~\ref{thm:discriminant}. Then, for any \emph{rational} polytope $\Pi\subset\R_{\geq 0}^2$, the following equality holds
\begin{equation}\label{eq:correction_term-discriminant2}
\MV(\Delta,~\Pi) = \MV(\Sigma,~\Lft(\Pi)) - \cT_A(\Pi).
\end{equation} 
\end{corollary}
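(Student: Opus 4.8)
The plan is to deduce the rational case from the integral case of Theorem~\ref{thm:correction_term-discriminant} by a dilation argument. Since $\Pi$ is a rational polytope, I would first choose an integer $m\in\N^*$ clearing all the denominators appearing among the coordinates of the points of $\Ver(\Pi)$, so that $m\Pi$ is a lattice polytope contained in $\R^2_{\geq 0}$. Applying Theorem~\ref{thm:correction_term-discriminant} to $m\Pi$ then yields
\[
\MV(\Delta,~m\Pi) = \MV(\Sigma,~\Lft(m\Pi)) - \cT_A(m\Pi),
\]
and the task reduces to showing that all three terms are homogeneous of degree one under the dilation $\Pi\mapsto m\Pi$, after which dividing by $m$ recovers~\eqref{eq:correction_term-discriminant2}.

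Next I would verify the three scalings. The equality $\MV(\Delta,~m\Pi) = m\,\MV(\Delta,~\Pi)$ follows because $m\Pi$ is the $m$-fold Minkowski sum of $\Pi$ with itself and $\MV(\cdot,\cdot)$ is linear in each argument with respect to $\oplus$. For the lifted polytope, from Definition~\ref{def:correction_term} one has $\Ver(m\Pi) = m\,\Ver(\Pi)$, while for each $b\in\Ver(\Pi)$ the $m$-dilation of $(b_1 A_1^0)\oplus(b_2 A_2^0)$ is $(mb_1 A_1^0)\oplus(mb_2 A_2^0)$, since both scalar multiplication and Minkowski sum commute with dilation; passing to convex hulls gives $\Lft(m\Pi) = m\,\Lft(\Pi)$, hence $\MV(\Sigma,~\Lft(m\Pi)) = m\,\MV(\Sigma,~\Lft(\Pi))$ again by bilinearity. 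Finally, directly from the defining maximum,
\[
\cT_A(m\Pi) = \max_{b\in m\Ver(\Pi)}\bigl\{b_i(\#\,\gamma_i\cap\N^2 - 1) + b_j(\#\,\delta_j\cap\N^2 - 1)\bigr\} = m\,\cT_A(\Pi),
\]
since the functional being maximized is linear in $b$ and its domain is dilated by $m$.

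Substituting these identities into the displayed equation for $m\Pi$ and cancelling $m\neq 0$ gives~\eqref{eq:correction_term-discriminant2} for the rational polytope $\Pi$. I do not expect a genuine obstacle here: the statement is purely combinatorial and is independent of the toric-geometric content of Theorem~\ref{thm:correction_term-discriminant}. The one point that deserves to be spelled out carefully is the dilation-equivariance $\Lft(m\Pi) = m\,\Lft(\Pi)$, where the explicit description of $\Lft$ as a vertex-wise Minkowski-sum construction is used; everything else is immediate from bilinearity of the mixed volume and the explicit formula for the $A$-correction term.
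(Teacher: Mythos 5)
Your proposal is correct and coincides with the paper's own argument: both deduce the rational case from Theorem~\ref{thm:correction_term-discriminant} by dilating $\Pi$ to a lattice polytope, checking that $\Lft$, $\cT_A$, and the mixed volumes are all homogeneous of degree one under dilation, and then cancelling the scaling factor. The only cosmetic difference is that the paper scales by a rational $\lambda$ while you scale by an integer $m$.
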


\begin{proof}
Let $\lambda\in \Q_{>0}$ be any value such that the scaled polytope $\lambda\cdot \Pi\subset \R^2$ is a lattice polytope. Since $\Lft(\lambda\cdot \Pi)$ will also be a lattice polytope, Theorem~\ref{thm:correction_term-discriminant} shows that
\begin{equation}\label{eq:correction_term-discriminant_l}
\MV(\Delta,~\lambda\cdot  \Pi) = \MV(\Delta,~\Lft(\lambda\cdot \Pi)) - \cT_A(\lambda\cdot  \Pi).
\end{equation} On the one hand, as scaling subsets in $\R^2$ commutes with the operations of taking the union, Minkowski sums, and convex hulls, we get 
\begin{equation}\label{eq:scaling_lifting}
\Lft(\lambda\cdot  \Pi) = \lambda\cdot\Lft(\Pi).
\end{equation} Similarly, from the definition of the correction term, we get
\[
\cT_A(\lambda\cdot \Pi) = \lambda~\cT_A(\Pi).
\] On the other hand, since the Mixed volume commutes with scaling of polytopes, we get $\MV(\Delta,~\lambda\cdot \Pi) = \lambda~\MV(\Delta,~\Pi)$, and $\MV(\Sigma,~\lambda\cdot \Lft(\Pi)) = \lambda~\MV(\Sigma,~ \Lft(\Pi))$. We obtain~\eqref{eq:correction_term-discriminant2} by combining the above identities to form the equality
\[
\lambda~\MV(\Delta,~\Pi) = \lambda~\MV(\Sigma,~\Lft(\Pi)) - \lambda~\cT_A(\Pi).
\] 
\end{proof}

\subsubsection{Correction term from mixed volumes}\label{sss:lattice-length} 
Let $P,\Pi\subset\R^2$ be full dimensional polytopes having the same inner normal fan $\cF$, and let $\rho_1, \dots, \rho_h$ be the rays in $\cF$ (i.e. its one-dimensional cones). For any $i = 1, \dots, h$ we set $\Pi_i\subset \Pi$ to be the polytope below (see the red polytope in Figure~\ref{fig:difference-volumes}):
\[  
\Pi_i := \{ q \in \R^2 \mid ~\langle q, \rho_i \rangle \geq \langle r, \rho_i \rangle + 1,~\forall r \in \Pi\}.
\]

\begin{lemma}
\label{lem:2-d-difference}
    The difference of volumes $\Vol(\Pi) - \Vol(\Pi_i)$ is equal 
    to the lattice length $\#~\Pi ^{\rho_i} \cap \Z^2 -1$ plus an error term
    which only depends on the fan.
\end{lemma}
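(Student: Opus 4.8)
The plan is to compute $\Vol(\Pi)-\Vol(\Pi_i)$ by slicing off a thin strip of $\Pi$ along the facet $\Pi^{\rho_i}$ and controlling the two triangular corners that this strip picks up at the ends. Write $F:=\Pi^{\rho_i}$ for the facet of $\Pi$ with inner normal $\rho_i$, and let $v$ be the primitive lattice vector directing $F$, so that the lattice length of $F$ is $\ell(F)=\#(F\cap\Z^2)-1=\tfrac{1}{\det\text{-}\mathrm{norm}}\|F\|$ in the scaled Euclidean volume normalization fixed in \S\ref{sss:not-prelim}. First I would observe that $\Pi_i$ is obtained from $\Pi$ by translating the supporting halfplane of $F$ inward by lattice distance one (with respect to $\rho_i$) and intersecting with $\Pi$; since $P$ and $\Pi$ share the normal fan $\cF$, the two neighbouring facets of $F$ in $\Pi$ have the same inner normals $\rho_{i-1},\rho_{i+1}$ as the corresponding facets of $P$, and hence the same edge directions. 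This is the point where the hypothesis ``same inner normal fan'' is used: the \emph{shape} of the two corner triangles cut off near the endpoints of $F$ depends only on the angles at those vertices, i.e. only on the rays $\rho_{i-1},\rho_i,\rho_{i+1}$, not on $\Pi$ itself.

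**Main computation.** Concretely, I would set up coordinates so that $\rho_i=(0,-1)$ (possible after an $\mathrm{SL}_2(\Z)$ change of basis, which preserves $\Vol$ and lattice lengths), so $F$ lies on a horizontal line $y=c$ and $\Pi$ lies in $\{y\le c\}$. Then $\Pi_i=\Pi\cap\{y\le c-1\}$, and $\Pi\setminus\Pi_i$ is the region $c-1\le y\le c$ inside $\Pi$. This region is a trapezoid-like band: at height $y=c$ its horizontal width is exactly $\ell(F)$ (the Euclidean length of $F$ in these coordinates, which equals the lattice length since $v$ is horizontal-primitive), and at height $y=c-1$ its width is $\ell(F)+\mu_{i-1,i}+\mu_{i,i+1}$, where $\mu_{i-1,i},\mu_{i,i+1}\in\Q$ are the horizontal displacements contributed by moving down one unit along the two slanted edges adjacent to $F$. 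Integrating, $\Vol(\Pi)-\Vol(\Pi_i)=\ell(F)+\tfrac12(\mu_{i-1,i}+\mu_{i,i+1})$, with the usual caveat that if $\Pi_i$ degenerates (the strip is thicker than $\Pi$ near a corner, so an adjacent facet of $\Pi$ disappears) the formula is modified — but in that case the correction is still a function of $\cF$ alone, since which facets survive and the exact clipped shape are determined by the rays together with the single integer "one", not by the metric data of $\Pi$. The quantities $\mu_{i-1,i}$ and $\mu_{i,i+1}$ are, by inspection, determined by the primitive generators of $\rho_{i-1},\rho_i,\rho_{i+1}$: if the adjacent edge has primitive direction $w$ then its unit downward step has horizontal component $\langle e_1,w\rangle/|\langle e_2,w\rangle|$, which is a rational number depending only on $w$ (equivalently on the pair of rays bounding that facet). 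Hence the error term $E_i(\cF):=\tfrac12(\mu_{i-1,i}+\mu_{i,i+1})$, or its degenerate-case analogue, depends only on the fan $\cF$ and the index $i$, giving
\[
\Vol(\Pi)-\Vol(\Pi_i)=\bigl(\#\,\Pi^{\rho_i}\cap\Z^2-1\bigr)+E_i(\cF).
\]

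**Expected obstacle and how to handle it.** The main technical nuisance is the degenerate case: when the inward shift by one wipes out an adjacent facet (or several), $\Pi_i$ can have fewer facets than $\Pi$, or even be empty, and the naive trapezoid integration breaks. I would handle this by a clean case split on how many of the edges $\rho_{i\pm1},\rho_{i\pm2},\dots$ are "cut through" by the line $y=c-1$, noting that in every case the portion of $\Pi$ removed above that line, minus the straight band of area $\ell(F)$, is a polygon whose vertices are intersection points of lines with fixed rational slopes (fixed by $\cF$) and either the line $y=c$ or the line $y=c-1$; translating the whole picture so those two lines are at heights $0$ and $-1$ shows the removed-minus-band polygon is literally a fixed polygon determined by $\cF$ and $i$, with area independent of $\Pi$. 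Since the statement only claims the existence of such an error term (not an explicit formula), this case analysis can be summarized rather than carried out in full. I expect this bookkeeping, rather than any conceptual difficulty, to be the bulk of the work; the substantive content — that the $O(1)$ part is fan-intrinsic — follows immediately from the shared-normal-fan hypothesis.
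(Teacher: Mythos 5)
Your proof follows essentially the same route as the paper's: both decompose the removed strip $\Pi\setminus\Pi_i$ into a band of area equal to the lattice length of $\Pi^{\rho_i}$ plus two triangular corners whose shape (and hence area) is governed only by the angles between $\rho_i$ and its neighbouring rays. Your explicit $\mathrm{SL}_2(\Z)$-normalization and integration of the trapezoid are just a coordinate-explicit version of the paper's picture; the error term $\tfrac12(\mu_{i-1,i}+\mu_{i,i+1})$ you compute is the same as the paper's $\tfrac{1}{2\|\rho_i\|}(\tan\alpha+\tan\beta)$ read in those coordinates. So far so good.

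Where your proposal goes wrong is the degenerate case. You assert that if the unit-width slab ``cuts through'' additional facets, or eats the whole polytope, the discrepancy is ``still a function of $\cF$ alone'' because the clipped shape is ``determined by the rays together with the single integer one.'' That is not so. Take $\Pi$ a triangle with $F=\Pi^{\rho_i}$ at the bottom and apex at height $h<1$; then $\Pi_i=\emptyset$ and
\[
\Vol(\Pi)-\Vol(\Pi_i)-\bigl(\#\,\Pi^{\rho_i}\cap\Z^2-1\bigr)=\tfrac12 Lh-L=L\bigl(\tfrac{h}{2}-1\bigr),
\]
where $L$ is the lattice length of $F$. Scaling $\Pi$ by a factor $\lambda$ preserves the normal fan but sends $L\mapsto\lambda L$, $h\mapsto\lambda h$, and the error term to $\lambda^2\tfrac{Lh}{2}-\lambda L$, which varies with $\lambda$. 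So the error term is \emph{not} fan-intrinsic once the slab reaches past the two edges adjacent to $F$. Concretely, your claim that the removed-minus-band region is ``literally a fixed polygon determined by $\cF$ and $i$'' fails because the interior vertices of $\Pi$ that fall inside the slab sit at positions determined by edge \emph{lengths} of $\Pi$, not just edge directions. The lemma should therefore be understood (as the paper's figure implicitly does) under the standing assumption that $\Pi$ is large enough for the slab to meet only $F$ and its two neighbouring facets; this is harmless in the downstream applications, where $\Pi$ is a free auxiliary polytope that can always be chosen large and where Minkowski summing with $P$ only enlarges it. Your non-degenerate computation is correct, but you should drop, or explicitly restrict, the degenerate-case claim.
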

\begin{proof}
We display the difference of $\Vol(\Pi)$ and $\Vol(\Pi_i)$ in Figure~\ref{fig:difference-volumes}. On the right, it is split into a yellow part of volume equal to the lattice length $\#~\Pi ^{\rho_i} \cap \Z^2 -1$, and a green part.
The green part constitutes a (possibly negative) error term equal to 
$ \frac{1}{ 2 \| \rho_i \|   }  (\operatorname{tan}(\alpha) +\operatorname{tan}(\beta)$.
Here $\alpha$ and $\beta$ denote the angles
$\alpha = \sphericalangle(\rho_1, \rho_2) + \frac{\pi}{2}$, 
$\beta = \sphericalangle(\rho_2, \rho_3) - \frac{\pi}{2}$.
In particular, the error term
depends only on the vectors $\rho_1, \dots, \rho_h$.
\end{proof}

\begin{figure}[h]
\centering
\includegraphics[width = 10cm]{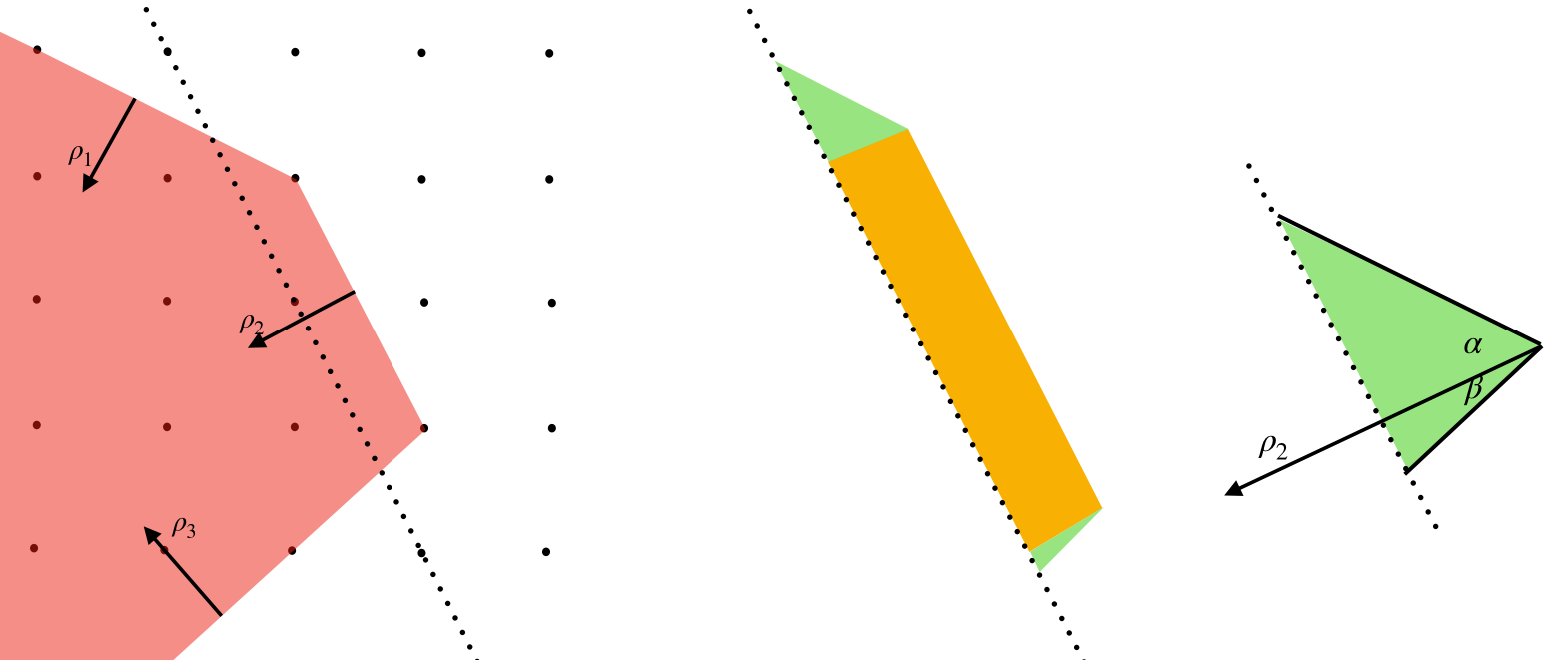} \vspace{-0.14in}
\caption{The polytope $\Pi_i$ is obtained from $\Pi$ by removing a strip of lattice width one.} \label{fig:difference-volumes}
\end{figure}

\begin{lemma}\label{lem:l-length-faces}
The lattice length of the face $P^{\rho_i}$ is equal to
    the difference of mixed volumes
    \[
    \# ~P^{\rho_i} \cap \Z^2 -1 = \MV(P, \Pi) - \MV(P, \Pi_i).
    \]
\end{lemma}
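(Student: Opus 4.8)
The statement relates the lattice length of the face $P^{\rho_i}$ to a difference of mixed volumes $\MV(P,\Pi) - \MV(P,\Pi_i)$, where $P$ and $\Pi$ share the inner normal fan $\cF$ and $\Pi_i$ is the "shrunk" polytope obtained by pushing the facet with inner normal $\rho_i$ inward by lattice distance one. The plan is to reduce everything to Lemma~\ref{lem:2-d-difference} by a bilinearity/scaling argument, in the following steps.

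First I would record the basic reduction: mixed volume in dimension $2$ satisfies $\MV(P,\Pi) = \Vol(P\oplus\Pi) - \Vol(P) - \Vol(\Pi)$, so
\[
\MV(P,\Pi) - \MV(P,\Pi_i) = \bigl(\Vol(P\oplus\Pi) - \Vol(P\oplus\Pi_i)\bigr) - \bigl(\Vol(\Pi) - \Vol(\Pi_i)\bigr).
\]
Both $P\oplus\Pi$ and $P\oplus\Pi_i$ have the same normal fan $\cF$ as $P$ and $\Pi$ (Minkowski sum of polytopes with a common refinement of normal fans has that normal fan), and moreover $P\oplus\Pi_i$ is exactly the polytope obtained from $P\oplus\Pi$ by pushing the $\rho_i$-facet inward by lattice distance one — because the support function of $\Pi_i$ differs from that of $\Pi$ precisely by subtracting $1$ in the $\rho_i$ direction, and support functions add under $\oplus$. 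Hence Lemma~\ref{lem:2-d-difference} applies to each of the two differences on the right-hand side.

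Next I would apply Lemma~\ref{lem:2-d-difference} twice. For the pair $(\Pi,\Pi_i)$ it gives $\Vol(\Pi) - \Vol(\Pi_i) = (\#\,\Pi^{\rho_i}\cap\Z^2 - 1) + E$, where $E$ is an error term depending only on the fan $\cF$ (specifically on the two rays of $\cF$ adjacent to $\rho_i$). For the pair $(P\oplus\Pi,\, P\oplus\Pi_i)$ it gives $\Vol(P\oplus\Pi) - \Vol(P\oplus\Pi_i) = (\#\,(P\oplus\Pi)^{\rho_i}\cap\Z^2 - 1) + E$, with \emph{the same} error term $E$, since $P\oplus\Pi$ has the same normal fan and hence the same pair of adjacent rays at $\rho_i$. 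The two error terms therefore cancel when we subtract, leaving
\[
\MV(P,\Pi) - \MV(P,\Pi_i) = \#\,(P\oplus\Pi)^{\rho_i}\cap\Z^2 - \#\,\Pi^{\rho_i}\cap\Z^2.
\]
Finally I would use that the face operation is additive under Minkowski sum, $(P\oplus\Pi)^{\rho_i} = P^{\rho_i}\oplus\Pi^{\rho_i}$, and that both faces are (parallel) segments lying on a common line direction, so their lattice lengths add: $\#\,(P^{\rho_i}\oplus\Pi^{\rho_i})\cap\Z^2 - 1 = (\#\,P^{\rho_i}\cap\Z^2 - 1) + (\#\,\Pi^{\rho_i}\cap\Z^2 - 1)$. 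Substituting, the $\Pi^{\rho_i}$ terms cancel and we are left with $\MV(P,\Pi) - \MV(P,\Pi_i) = \#\,P^{\rho_i}\cap\Z^2 - 1$, as claimed.

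The main obstacle I anticipate is bookkeeping rather than conceptual: one must be careful that $\Pi_i$ is genuinely a lattice (or at least rational) polytope with the same fan $\cF$ — this needs the facet of $\Pi$ normal to $\rho_i$ to have lattice distance at least $1$ to the opposite support, which should be arranged/assumed so that $\Pi_i$ is nonempty and full-dimensional — and that the "error term depends only on the fan" clause of Lemma~\ref{lem:2-d-difference} really is insensitive to replacing $\Pi$ by $P\oplus\Pi$. The latter is the crux: it hinges on the error term in that lemma being a function of the two adjacent edge directions of the fan alone, which is exactly what its proof asserts (the angles $\alpha,\beta$ depend only on $\rho_1,\rho_2,\rho_3$). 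Once that is granted, the cancellation is immediate and the rest is the standard additivity of faces and lattice lengths under Minkowski sums in the plane.
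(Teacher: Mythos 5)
Your proof follows essentially the same route as the paper: expand $\MV(P,\Pi)-\MV(P,\Pi_i)$ into a difference of volume differences via $\MV(\cdot,\cdot)=\Vol(\cdot\oplus\cdot)-\Vol(\cdot)-\Vol(\cdot)$, apply Lemma~\ref{lem:2-d-difference} to each of the two volume differences so that the fan-only error terms cancel, and then use additivity of lattice lengths of faces under Minkowski sums. You make explicit two points the paper leaves implicit — that $P\oplus\Pi_i = (P\oplus\Pi)_i$ because support functions add, and that the error terms are literally equal because the adjacent rays at $\rho_i$ are the same — but these are clarifications, not a different argument.
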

\begin{proof}
 Using the properties of the mixed volume, we obtain the expansion
\begin{equation}
\label{eq: equation to show in prop}
    \MV(P, \Pi) - \MV(P, \Pi_i) =  \Vol(P\oplus \Pi) - \Vol(P\oplus \Pi_i) - \left( \Vol(\Pi) - \Vol(\Pi_i) \right).
\end{equation}
    Applying Lemma~\ref{lem:2-d-difference} to the right side of \eqref{eq: equation to show in prop} yields
\begin{equation}
\label{eq: small equation in proof}
        \#~\left( P\oplus \Pi  \right)^{\rho_i} \cap \Z^2 - 1 - \left( \# ~ \Pi ^{\rho_i} \cap \Z^2 -1 \right).
\end{equation}
    Lattice lengths of faces are additive in Minkowski sums, so we have
        \[
    \#~\left( P\oplus \Pi  \right)^{\rho_i} \cap \Z^2 - 1 =
    \#~P ^{\rho_i} \cap \Z^2 - 1
    +  \#~\Pi ^{\rho_i} \cap \Z^2 -1.
    \]
    Substituting this into ~\eqref{eq: small equation in proof}, and~\eqref{eq: small equation in proof} into \eqref{eq: equation to show in prop}
    finishes the proof.
\end{proof}

\begin{remark}
    Let
    $\rho_1, \dots, \rho_h$ be primitive, pairwise distinct, vectors in the plane, ordered clockwise,
    where $\rho_i = ( \rho_{i,1},\rho_{i,2} )$. We denote by $\cF$ the planar fan whose rays are generated by $\rho_1, \dots, \rho_h$.
    One can construct a polytope $\Pi$ with normal fan $\cF$ as follows.
    We inductively define the vertices $v_1, \dots, v_h$ of $\Pi$ by setting $v_1 = (0,0)$
    and $v_{i+1}:=v_i+\kappa_i ( - \rho_{i,2},\rho_{i,1} )$.
    Here $(\kappa_1, \dots, \kappa_h)$ is a tuple of natural numbers
    satisfying $\kappa_1 \rho_1+ \cdots+ \kappa_h\rho_h = 0$.
    Computing $\Pi$, such that the total $\kappa_1 + \cdots+ \kappa_h$ of the lattice length of its faces is minmized, can in practice be done by solving an integer linear program.
\end{remark}

We finish this part with the following Lemma whose proof can be found in the proof of Lemma~\ref{lem:Newton_polytopes-invar}.

\begin{lemma}\label{lem:Jacobian-polytope}
    Let $A\in\ccCgeq$ be a conical pair. Then, the corresponding polytope $\Sigma$ appearing in Theorem~\ref{thm:discriminant} is equal to
\[
\Sigma=
\{ v_1+v_2 \mid (v_1, v_2)\in A_1\times A_2 :\quad 
\operatorname{det}( v_1  v_2
) \neq 0\}
\]
\end{lemma}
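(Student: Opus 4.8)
The plan is to read off the statement from the coefficient-by-coefficient expansion of $\det\Jac_z F$ already recorded in the proof of Lemma~\ref{lem:Newton_polytopes-invar}. Write a generic $f\in\C^A$ as the specialization of the universal pair $F=(F_1,F_2)$ whose coefficients are the indeterminates $\underline c=(c_u)_{u\in A_1\cap\N^2}$ and $\underline d=(d_v)_{v\in A_2\cap\N^2}$, so that $\supp f_i=A_i\cap\N^2$. Differentiating monomials and expanding the $2\times 2$ Jacobian determinant gives
\[
\det\Jac_z F=\sum_{s}\varphi_s\,z^{s},\qquad \varphi_s=\sum_{\substack{u\in A_1\cap\N^2,\ v\in A_2\cap\N^2\\ u+v=s+(1,1)}}\det(u,v)\,c_u d_v,
\]
exactly as in that proof. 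The one point that needs to be made explicit is that there is no cancellation among the summands defining a single $\varphi_s$: for fixed $s$ the monomials $c_u d_v$ attached to distinct admissible pairs $(u,v)$ are pairwise distinct monomials in $\Z[\underline c,\underline d]$, hence linearly independent. Consequently $\varphi_s$ is the zero polynomial if and only if $\det(u,v)=0$ for \emph{every} pair $(u,v)\in(A_1\cap\N^2)\times(A_2\cap\N^2)$ with $u+v=s+(1,1)$, so that the support of $\det\Jac_z F$ over $\Z[\underline c,\underline d]$ is
\[
S=\bigl\{\,u+v-(1,1)\ \bigm|\ (u,v)\in(A_1\cap\N^2)\times(A_2\cap\N^2),\ \det(u,v)\neq 0\,\bigr\}.
\]

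Next I would specialize: for $f$ in the Zariski-open subset $K\subset\C^A$ of Lemma~\ref{lem:Newton_polytopes-invar}, defined by $\prod_{s\in S}\varphi_s\neq 0$, no further cancellation occurs, hence $\NP(\det\Jac_z f)=\conv(S)$. Passing from $\det\Jac_z f$ to the critical curve requires stripping the monomial factor: by Remark~\ref{rem:gaps} one has $\det\Jac_z f=z_1^{m_v}z_2^{m_h}\cdot Q$ with $Q(\bm 0)\neq 0$; since $Q$ is then divisible by neither $z_1$ nor $z_2$, the curve $\V(Q)$ has no component contained in a coordinate axis, so $\Csf=\V(Q)$ and $\Sigma=\NP(\Csf)=\NP(Q)=\conv(S)-(m_v,m_h)$. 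Identifying this translated polytope with the convex hull of $\{v_1+v_2\mid(v_1,v_2)\in A_1\times A_2,\ \det(v_1\,v_2)\neq 0\}$ is then immediate from the description of $S$ together with the definition of $m_v,m_h$ (equivalently, it is the translation normalizing $\Sigma$ to be convenient).

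Everything here is elementary linear-algebra bookkeeping; the conical hypothesis on $A$ is used only to place us in the setting of Theorem~\ref{thm:discriminant}, where $\Sigma$ is defined, and plays no role in the identity itself. The only mild subtlety — and therefore the step I would be most careful with — is the non-cancellation observation combined with the translation that accounts for the factor $z_1^{m_v}z_2^{m_h}$: one must make sure the "set" on the right-hand side of the lemma is understood with this normalization, and that no exponent $s$ drops out of $S$ due to an accidental vanishing of $\varphi_s$ as a polynomial. Both are handled by the linear independence of the $c_u d_v$.
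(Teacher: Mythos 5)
Your proof follows the same route the paper itself indicates — it defers to the computation already carried out in the proof of Lemma~\ref{lem:Newton_polytopes-invar} — and you are more careful than that proof on two points it leaves implicit: the no-cancellation observation that $\varphi_s\not\equiv 0$ exactly when some admissible pair $(u,v)$ has $\det(u,v)\neq 0$ (correct, since distinct admissible pairs give distinct monomials $c_u d_v$), and the passage from $\NP(\det\Jac_z f)$ to $\NP(\Csf)$ by dividing out $z_1^{m_v}z_2^{m_h}$. Both refinements are right.

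The ``subtlety'' you flag at the end, though, is a genuine discrepancy that cannot be absorbed by saying the identification ``is then immediate.'' Your own bookkeeping gives
\[
\NP(\Csf)=\conv(S)-(m_v,m_h)=\conv\bigl\{v_1+v_2 : (v_1,v_2)\in A_1\times A_2,\ \det(v_1\,v_2)\neq 0\bigr\}-(1+m_v,\ 1+m_h),
\]
which is a nontrivial translate of the set displayed in the lemma. Indeed, the displayed set lies in $\{x\geq 1\}\cap\{y\geq 1\}$ (a sum $v_1+v_2$ lying on a coordinate axis forces $v_1,v_2$ collinear, hence $\det(v_1\,v_2)=0$), whereas $\Sigma=\NP(\Csf)$ contains the origin because $Q(\bm 0)\neq 0$. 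So the equality in the lemma holds only after translating by $-(1+m_v,1+m_h)$; as printed it is off by exactly that shift, and in fact the paper's own proof (which silently sets $\Sigma=\NP(\det\Jac_z f)=\conv(S)$) already differs from the claim $\Sigma=\NP(\Csf)$ of Lemma~\ref{lem:Newton_polytopes-invar} by $(m_v,m_h)$. Your write-up should therefore state the normalizing translation explicitly rather than assert the identification; the computation itself is sound, and the imprecision is in the lemma's statement, not your argument.
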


\subsubsection{The algorithm for the discriminant}\label{sss:the-alg-discr}
Lemma~\ref{lem:Jacobian-polytope}, together with Remark~\ref{rem:rays} give rise to Algorithm~\ref{euclid} for computing the normal fan of the corresponding polytope $\Delta$ from Theorem~\ref{thm:discriminant}; it takes as input a conical pair $A$ and returns the rays in the normal fan of the discriminant $\Delta$.

\begin{algorithm}
\caption{Computing the normal fan of the discriminant}\label{euclid}
\begin{algorithmic}[1]
\Procedure{getRayList($A_1,A_2$)}{}
\State $\Sigma \gets  getJacobian(A_1, A_2)$
\State $rayList \gets getRaysInOuterNormalFan(\Sigma)$
\State $resultingRays \gets \emptyset$
\For{ $\tau \in rayList $}
\State $\rho_1 \gets \max_{r \in A_1}( \langle r, \tau \rangle)$
\State $\rho_2 \gets \max_{s \in A_2}( \langle s, \tau \rangle)$
\If{  $\rho_1 \neq 0 \text{ or } \rho_2 \neq 0$   }
\State $resultingRays \gets  resultingRays \cup \{(\rho_1, \rho_2)\}$
\EndIf
\EndFor\\
\Return $resultingRays$
\EndProcedure
\end{algorithmic}
\end{algorithm}

\begin{algorithm}
\caption{Computing lattice lengths of the discriminant}\label{euclid}
\begin{algorithmic}[1]
\Procedure{getLatticeLength($A_1,A_2,\Sigma,rayList,{\rho_{i}}$)}{}
\State $\Pi \gets PolytopeWithNormalFan(rayList)$
\State $ H \gets \{ m \in \R^2 \mid \ \forall \pi \in \Pi : \langle m, \rho_i \rangle \geq \langle \pi , \rho_i \rangle  + 1 \} $
\State $\Pi_i \gets \Pi \cap H$
\State $\cT_A(\Pi) \gets correction\_term(A_1, A_2,\Pi)$
\State $\cT_A(\Pi_i) \gets correction\_term(A_1, A_2,\Pi_i)$
\State $\Lft(\Pi) \gets getLiftedPolytope(\Pi)$
\State $\Lft(\Pi_i) \gets getLiftedPolytope(\Pi_i)$
\State $ vol\_1 \gets \MV(\Sigma,~\Lft(\Pi)) - \cT_A(\Pi)$
\State $ vol\_2 \gets \MV(\Sigma,~\Lft(\Pi_i)) - \cT_A(\Pi_i)$\\
\Return $vol\_1 - vol\_2$
\EndProcedure
\end{algorithmic}
\end{algorithm}

Lemma~\ref{lem:l-length-faces} allows us to uniquely reconstruct a planar polytope $P$ with fixed normal fan, up to translation, by evaluating mixed volumes $ \MV(P, \Pi)$. 
In particular, based on Theorem \ref{thm:correction_term-discriminant}, we obtain the following algorithm to compute $\Delta$.
The input consists of a conical pair $(A_1, A_2)$, the Newton polytope $\Sigma$ of the Jacobian, and a list of the rays $\rho_1, \dots, \rho_h$ of the normal fan of $\Delta$ with one distinguished element $\rho_i$. The output is the lattice length of the face $\Delta^{\rho_{i}}$.

\subsection*{Acknowledgments}  
The first author would like to thank Khazhgali Kozhasov for his valuable remarks on an earlier version of this paper.

\subsection*{Contact}
\ \\
  Boulos El Hilany,\\
TU Braunschweig, Institut f\"ur Analysis und Algebra, \\
 Universit\"atsplatz 2. 38106 Braunschweig, Germany\\ 
 \href{mailto:b.el-hilany@tu-braunschweig.de}{b.el-hilany@tu-braunschweig.de},\\
 \href{https://boulos-elhilany.com}{boulos-elhilany.com}.\\
 
 \noindent Kemal Rose,\\
  Max Planck Institute for Mathematics in the Sciences,\\
   Inselstraße 22, 04103 Leipzig, Germany\\
    \href{mailto:krose@mis.mpg.de}{krose@mis.mpg.de}.
    
\bibliographystyle{abbrv}					   

\def\cprime{$'$}

\end{document}